\documentclass{amsart}
\usepackage[margin=1in]{geometry}
\usepackage{amsmath}
\usepackage{amssymb}
\usepackage{amsthm}
\usepackage{mathrsfs}
\usepackage{mathtools}
\usepackage{enumerate}
\usepackage{tikz}
\usetikzlibrary{decorations.pathreplacing}
\usepackage[pagebackref]{hyperref}
\usepackage{tabularx}
\usepackage{float}
\usepackage{relsize}
\usepackage[T1]{fontenc}
\newtheorem{thm}{Theorem}[section]
\newtheorem{prop}[thm]{Proposition}
\newtheorem{lem}[thm]{Lemma}
\newtheorem{cor}[thm]{Corollary}
\theoremstyle{definition}
\newtheorem{dfn}[thm]{Definition}

\newtheorem{ex}[thm]{Example}
\newtheorem{rmk}[thm]{Remark}

\numberwithin{equation}{section}

\usepackage{xcolor}

\DeclareMathOperator{\Fav}{Fav}
\DeclareMathOperator{\Lip}{Lip}
\DeclareMathOperator{\proj}{proj}

\title[A Quantified Two-projection Theorem for Nonlinear Projections]{A Quantified Two-projection Theorem for Nonlinear Projections}

\author{Zhangze Li}
\address{Department of Mathematics, The Ohio State University, Columbus, OH}
\email{li.15128@osu.edu}

\author{Krystal Taylor}
\address{Department of Mathematics, The Ohio State University, Columbus, OH}
\email{taylor.2952@osu.edu}

\date{}

\thanks{K.T. is supported in part by the Simons Foundation Grant GR137264.
Part of this work was completed at the American Institute of Math during the SQuaRE \textit{Covering Fractals by Curves}.}

\subjclass[2020]{Primary: 28A15, 28A75, 28A78, 42B99; Secondary:28A80, 51F99, 60D05}

\keywords{Rectifiability, Besicovitch projection theorem, two-projection theorem, nonlinear projections, pinned distance sets, radial projections, multiscale analysis,quantitative geometric measure theory}

\begin{document}

\begin{abstract}
The classic Besicovitch projection theorem asserts that if a set is purely $1$-unrectifiable with finite length in $\mathbb{R}^2$, then its orthogonal projection has Lebesgue measure zero in almost every direction.
Conversely, the two-projection theorem states that if a Borel set has zero measure under orthogonal projections onto two distinct non-antipodal directions, it must be purely $1$-unrectifiable.
We extend the two-projection theorem to certain families of nonlinear projections and consider applications to pinned distance sets, radial projections, and curve projection operators.
Further, we use a multiscale framework to obtain a quantitative version of our nonlinear two-projection theorem. Our arguments utilize methods introduced by Tao, who provided a quantitative treatment of the classic linear two-projection theorem.
\end{abstract}

\maketitle

\setcounter{tocdepth}{1}
\tableofcontents

\section{Introduction and background}
Rectifiability plays a fundamental role in geometric measure theory.
Roughly speaking, rectifiable sets behave like subsets of Lipschitz graphs, while purely unrectifiable sets have no such structure.

Investigating geometric characterizations of rectifiability is a central problem in the field.
We focus on two foundational results that relate rectifiability to projection theory: the Besicovitch projection theorem and the two-projection theorem. Before stating each of these results, we give some relevant definitions.

\begin{dfn}[Lipschitz Graph]{}
    A set $\zeta\subseteq\mathbb R^2$ is called a \textit{Lipschitz graph} if there exists a Lipschitz function $F:\mathbb R \to \mathbb R$ and an orthonormal pair $\omega_1,\omega_2\in S^1$ such that $\zeta=\{x\omega_1+F(x)\omega_2:x\in\mathbb R\}$.
\end{dfn}


\begin{dfn}[Rectifiable and Unrectifiable Sets]\label{defn: rect}
    Let $E\subseteq\mathbb R^2$.
    \begin{enumerate}[\rm(1)]
        \item $E$ is called \textit{$1$-rectifiable} if there exists a countable collection of Lipschitz graphs $\{\zeta_n\}_{n\in\mathbb N}$ such that  $E$ is covered by $\{\zeta_n\}_{n\in\mathbb N}$ up to a $\mathcal{H}^1$-null set, that is, $\mathcal{H}^1\left(E\setminus \bigcup\limits_{n=1}^\infty \zeta_n\right)=0$, where $\mathcal{H}^1$ denotes the $1$-dimensional Hausdorff measure.
        \item $E$ is called \textit{purely $1$-unrectifiable} if $\mathcal{H}^1\left(E\cap \zeta\right)=0$ for every Lipschitz graph $\zeta\subseteq\mathbb R^2$. Equivalently, $E$ is purely $1$-unrectifiable if $\mathcal{L}^1(\{x\in\mathbb R: x\omega_1+F(x)\omega_2\in E\})=0$ for every Lipschitz function $F:\mathbb R \to \mathbb R$ and orthonormal pair $\omega_1,\omega_2\in S^1$.
    \end{enumerate}
\end{dfn}

Throughout the paper, $\proj_\omega$ denotes the orthogonal projection onto $\omega\in S^1$, and $\sigma$ denotes the normalized surface measure on $S^1$.

The Besicovitch projection theorem asserts the following:
\begin{thm}[Besicovitch Projection Theorem, \cite{Bes39}]\label{thm:Bes}
    Let $E\subseteq \mathbb R^2$ be a Borel set with $\mathcal{H}^1(E)<\infty$. If $E$ is purely $1$-unrectifiable, then $\mathcal{L}^1(\proj_{\omega}(E))=0$ for $\sigma$-a.e.\ $\omega\in S^1$.
\end{thm}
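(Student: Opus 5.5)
We would prove Theorem~\ref{thm:Bes} by contradiction through a density and cone argument in the spirit of Besicovitch and Federer. Suppose that $E$ is Borel with $\mathcal{H}^1(E)<\infty$ and purely $1$-unrectifiable, and that the set $\Omega$ of directions with $\mathcal{L}^1(\proj_\omega E)>0$ has $\sigma(\Omega)>0$. The goal is to find a Lipschitz graph $\zeta$ with $\mathcal{H}^1(E\cap\zeta)>0$, which contradicts pure unrectifiability.

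The first step is to localize. Using the upper density bound $\limsup_{r\to0}\mathcal{H}^1(E\cap B(x,r))/(2r)\le 1$ for $\mathcal{H}^1$-a.e.\ $x\in E$, and using Egorov-type arguments, we pass to a compact subset $K\subseteq E$. On $K$ we require a uniform density bound $\mathcal{H}^1(E\cap B(x,r))\le 2r$ for all $x\in K$ and $r<r_0$. We also require that $\mathcal{L}^1(\proj_\omega K)>0$ for $\omega$ in a set $\Omega'$ of positive measure. Throughout we use $\mathcal{L}^1(\proj_\omega A)\le\mathcal{H}^1(A)$, since projections are $1$-Lipschitz.

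The second step is the geometric dichotomy. Fix a direction $\omega_0$, an aperture $\theta$, and the double cone $X(x,\omega_0,\theta)$ of directions within $\theta$ of $\omega_0^{\perp}$ around $x$. Suppose there is a subset $K'\subseteq K$ of positive measure such that, for every $x\in K'$, the set $K'\cap B(x,\rho)$ lies in $X(x,\omega_0,\theta)$. Then, after splitting $K'$ into pieces of diameter less than $\rho$, each piece is a subset of a Lipschitz graph over $\omega_0^{\perp}$ with constant about $\tan\theta$. This contradicts pure unrectifiability. So for $\mathcal{H}^1$-a.e.\ $x\in K$ and every cone, the complementary cones contain points of $E$ at arbitrarily small scales, and in fact they carry mass comparable to the scale. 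The quantitative form is a lemma: for a.e.\ $x$ and every direction $\omega$ outside a set of small measure, $\mathcal{H}^1\bigl(E\cap B(x,r)\setminus X(x,\omega,\theta)\bigr)\gtrsim r$ along a sequence $r\to0$. This is proved by combining the density bound with a covering argument.

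The third step converts this into smallness of projections. For a fixed good direction $\omega$, we cover $K$ by balls $B(x,r)$. The mass of $E\cap B(x,r)$ lying outside the cone about $\omega^\perp$ projects into an interval whose length is at most a factor of about $\sin\theta$ times its mass. We then apply Vitali covering and iterate over scales. At each stage a fixed proportion of the measure of $\proj_\omega K$ is shown to be covered inefficiently, so that $\mathcal{L}^1(\proj_\omega K)\le(1-c)\mathcal{L}^1(\proj_\omega K)+\varepsilon$. This forces $\mathcal{L}^1(\proj_\omega K)=0$ for a.e.\ $\omega$, the desired contradiction.

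The main obstacle is the third step. The difficulty is making the cone mass estimate uniform across directions so that Fubini can be applied over $\omega$. The mass outside the cone at $x$ might concentrate in a few directions, and then projections in other directions are not reduced. To handle this, we would first integrate over $\omega$ the quantity $\int_K \mathbf 1\{\ldots\}\,d\mathcal{H}^1$. With Fubini, this shows that for $\sigma$-a.e.\ $\omega$ the set of points failing the cone condition in direction $\omega$ is $\mathcal{H}^1$-null. Following Besicovitch, we then classify points as regular or irregular, and handle the remaining points with a Lebesgue density argument in the projected line.
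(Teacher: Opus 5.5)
The paper gives no proof of this theorem; it quotes it from \cite{Bes39}. Judged on its own, your outline has a genuine gap, exactly at the step you flag as the main obstacle.

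\textbf{Step 2 is too weak, and Step 3 does not follow from it.} What Step 2 proves is the qualitative cone dichotomy: for $\mathcal{H}^1$-a.e.\ $x$, no $E\cap B(x,\rho)$ lies inside a double cone $X(x,\omega,\theta)$ with $\theta<\pi/2$. This holds for \emph{every} purely $1$-unrectifiable set in \emph{every} direction, so it cannot force $\mathcal{L}^1(\proj_\omega E)=0$ for any particular $\omega$. For example, the four-corner Cantor set $C\times C$, with $C=\{\sum_i d_i4^{-i}:d_i\in\{0,3\}\}$, is purely unrectifiable and satisfies the dichotomy, yet $(x,y)\mapsto x+2y$ maps it onto $[0,3]$. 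A quantitative version at the natural size does not help either. Mass $\gtrsim\theta r$ in the cone of aperture $\theta$ about the fiber direction $\omega^\perp$ projects onto an interval of length $\lesssim\theta r$, so nothing is lost. The claimed inequality $\mathcal{L}^1(\proj_\omega K)\le(1-c)\mathcal{L}^1(\proj_\omega K)+\varepsilon$ therefore has no source: local cone mass at $x$ says nothing about whether its projection overlaps the projection of the rest of $K$. Also, with the paper's convention ($\proj_\omega$ projects onto the line through $\omega$), it is the mass \emph{inside} $X(x,\omega,\theta)$ that lands in a short interval. The mass outside projects onto a set of length $\sim r$, so Step 3 as written has the cone on the wrong side.

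\textbf{The missing lemma.} What you actually need is that for $\sigma$-a.e.\ $\omega$ and $\mathcal{H}^1$-a.e.\ $x\in E$,
\[
\sup_{\theta>0}\,\limsup_{r\to 0}\,\frac{\mathcal{H}^1\bigl(E\cap B(x,r)\cap X(x,\omega,\theta)\bigr)}{\theta r}=\infty .
\]
Given this, no covering or iteration is required. Since $E\cap B(x,r)\cap X(x,\omega,\theta)\subseteq\proj_\omega^{-1}\bigl(B(\proj_\omega x,\theta r)\bigr)$, the finite measure $\nu_\omega=\proj_{\omega\#}(\mathcal{H}^1|_E)$ has infinite upper density at $\proj_\omega x$ for a.e.\ $x$. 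By the Lebesgue--Besicovitch differentiation theorem this happens only on an $\mathcal{L}^1$-null set. The exceptional set $Z$ is handled by $\mathcal{L}^1(\proj_\omega Z)\le\mathcal{H}^1(Z)=0$. The same computation shows the display must fail on a set of positive $\mathcal{H}^1$-measure in every direction with $\mathcal{L}^1(\proj_\omega E)>0$, and the Cantor example shows such directions exist. So it is genuinely an almost-every-direction statement, and proving it is the heart of Besicovitch's theorem.

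Your proposal asserts a version of this lemma (``they carry mass comparable to the scale'', ``proved by combining the density bound with a covering argument'') but gives no mechanism for it. The Fubini step is circular: integrating over $\omega$ yields ``$\sigma$-a.e.\ $\omega$'' only if you already know that, for $\mathcal{H}^1$-a.e.\ $x$, the set of failing directions is $\sigma$-null, which is the missing lemma itself. Finally, the regular/irregular classification adds nothing, since a purely $1$-unrectifiable set of finite length is already irregular at almost every point.
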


The Besicovitch theorem can be formulated in terms of Favard length.
The \textit{Favard length} of a Borel set $E\subseteq\mathbb R^2$ is defined as
\[\Fav(E):=\int_{S^1} \mathcal{L}^1(\proj_\omega(E))\,d\sigma(\omega).\]
So, with $E$ as in Theorem \ref{thm:Bes}, if $E$ is purely $1$-unrectifiable, then $\Fav(E)=0$.

The Besicovitch projection theorem is foundational and has inspired several major lines of further research, including (i) the Favard length problem, (ii)  the development of quantitative variants, and (iii) the study of nonlinear variants, in which the family of orthogonal projections is replaced by some other family of maps. We give a brief introduction to these rapidly developing research areas in Section \ref{sec:survey}.

In this article, we initiate further investigation into the comparatively less studied two‑projection theorem; the proof of this theorem is elementary and relies on the Lebesgue density theorem and Rademacher's differentiation theorem, see \cite{Tao09}.
\begin{thm}[Two-Projection Theorem]\label{thm:twoproj}
    Let $E\subseteq \mathbb R^2$ be a Borel set. If there exist two non-antipodal $\omega,\omega'\in S^1$ such that $\mathcal{L}^1(\proj_\omega (E))=\mathcal{L}^1(\proj_{\omega'}(E))=0$, then $E$ is purely $1$-unrectifiable.
\end{thm}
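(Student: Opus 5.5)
We argue by contradiction. Suppose $E$ is not purely $1$-unrectifiable, so there are a Lipschitz graph $\zeta=\{x\omega_1+F(x)\omega_2\}$ and a set $A=\{x\in\mathbb R: x\omega_1+F(x)\omega_2\in E\}$ with $\mathcal L^1(A)>0$. Put $\gamma(x)=x\omega_1+F(x)\omega_2$. The goal is a point $x_0\in A$ where the two maps $g(x)=\proj_\omega(\gamma(x))$ and $g'(x)=\proj_{\omega'}(\gamma(x))$ cannot both send $A$ to null sets. Each of $g,g'$ is Lipschitz from $\mathbb R$ to $\mathbb R$, and $g(A)\subseteq\proj_\omega(E)$, $g'(A)\subseteq\proj_{\omega'}(E)$, so both $g(A)$ and $g'(A)$ have measure zero. (We may shrink $A$ to a Borel subset of positive measure; measurability is harmless here because $\gamma$ is continuous and $E$ is Borel.)

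\textbf{Step 1: a good point.} By Rademacher's theorem, $F$ is differentiable a.e. By the Lebesgue density theorem, a.e.\ point of $A$ is a density point. Pick $x_0\in A$ with both properties. Then $\gamma'(x_0)=\omega_1+F'(x_0)\omega_2=:v\neq 0$. Since $\omega,\omega'$ are non-antipodal, they are not parallel; in particular $\omega\neq\pm\omega'$. A nonzero vector $v$ cannot be orthogonal to two non-parallel unit vectors in $\mathbb R^2$. Hence, after relabeling, $c:=v\cdot\omega\neq 0$, so $g'(x_0)=c\neq 0$.

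\textbf{Step 2: local bi-Lipschitz behaviour on $A$.} Differentiability at $x_0$ gives $g(x)=g(x_0)+c(x-x_0)+o(|x-x_0|)$. Hence for small $r$, $g$ maps $A\cap[x_0-r,x_0+r]$ into an interval of length about $2|c|r$. The main obstacle comes here: differentiability at a single point does not make $g$ injective near $x_0$, so we cannot directly bound $\mathcal L^1(g(A\cap I))$ from below.

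We handle this with density. We have $\mathcal L^1(A\cap[x_0-r,x_0+r])\ge(2-\epsilon)r$. Suppose $g(A\cap[x_0-r,x_0+r])$ is null. Cover it by open intervals of total length $<\epsilon r$. Every point $y\in g(A\cap I)$ lies within $o(r)$ of $g(x_0)+c(\cdot-x_0)$ evaluated at the corresponding $x$. Therefore the set of $x\in A\cap I$ is contained in the preimage under the affine map $x\mapsto g(x_0)+c(x-x_0)$ of the $\delta r$-neighbourhood of the cover, where $\delta=o(1)$. That neighbourhood has measure at most $\epsilon r+O(\delta r)\cdot(\text{number of intervals})$, which is problematic because the number of intervals is uncontrolled.

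To avoid this, argue in a simpler way: the affine approximation alone says nothing about null sets, so restrict to a positive-measure compact $K\subseteq A$ on which the difference quotients converge uniformly (Egorov applied to $\frac{F(x+h)-F(x)}{h}$). On such a $K$, for $x,y\in K$ close together, $|g(x)-g(y)|\ge \frac{|c|}{2}|x-y|$. This requires choosing $K$ so that $F'$ is nearly constant on $K$, by Lusin, and then $x_0$ a density point of $K$. Then $g|_{K\cap I}$ is bi-Lipschitz, so $\mathcal L^1(g(K\cap I))\ge \frac{|c|}{2}\mathcal L^1(K\cap I)>0$, which contradicts $\mathcal L^1(\proj_\omega(E))=0$.

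The delicate step is this uniform lower bound $|g(x)-g(y)|\gtrsim|x-y|$ on $K$. The plan is to obtain it from uniform differentiability on an Egorov/Lusin set together with near-constancy of $F'$ there, using the fact that $\gamma(y)-\gamma(x)$ is close to $(y-x)v$ whenever both points are in $K$ and close.
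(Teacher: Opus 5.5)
Your Step~1 matches the paper's (see the proof of Theorem~\ref{thm:two proj for general proj}, from which the orthogonal case is deduced). Your final argument is also correct, but Step~2 follows a genuinely different route.

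\textbf{How the paper avoids the injectivity problem.} The obstacle you flag, that differentiability at $x_0$ gives no injectivity, is handled in the paper without any uniformity. The points $g(x_0+r)$ and $g(x_0-r)$ lie on opposite sides of $g(x_0)$, each at distance more than $\frac34|c|r$. By the intermediate value theorem, $g([x_0-r,x_0+r])$ is therefore an interval of length greater than $\frac32|c|r$. On the other hand, density gives $\mathcal L^1(g([x_0-r,x_0+r]\setminus A))\le 2\varepsilon r\Lip(g)$. Since $g(A\cap I)\supseteq g(I)\setminus g(I\setminus A)$, subtracting gives $\mathcal L^1(g(A\cap[x_0-r,x_0+r]))>\frac34|c|r$ once $\varepsilon<\frac{3|c|}{8\Lip(g)}$.

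\textbf{What each approach buys.} You instead extract a positive-measure set on which $g$ is bi-Lipschitz, and bound its image through the Lipschitz inverse. The paper's version needs only differentiability at one density point plus the one-dimensional intermediate value theorem. That is exactly the structure it later quantifies in Section~\ref{section:proof}: density point $\to$ pigeonholed dyadic interval, derivative $\to$ $L^2$-stable piecewise linear approximants, intermediate value theorem $\to$ rising sun counting. Your version costs Egorov, but it never uses that images of intervals are intervals, and it produces a bi-Lipschitz piece. It is essentially a hand-made case of the area-formula argument in Section~\ref{sec:contrapositive}, and so it extends more readily beyond one-dimensional parameter domains.

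\textbf{Tightening your sketch.} First, delete the abandoned covering attempt. Second, make Egorov legitimate by applying it to $\rho_n(x)=\sup_{0<|h|<1/n,\,h\in\mathbb Q}\bigl|\frac{F(x+h)-F(x)}{h}-F'(x)\bigr|$; continuity of $F$ lets you restrict to rational $h$.

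Third, fix the order of choices. As written it is circular: $K$ is chosen with $F'$ nearly constant before $x_0$ is chosen, hence before $c$ and the required tolerance are known. The non-circular order is:
\begin{enumerate}[\rm(1)]
\item Take a compact $K\subseteq A$ of positive measure, inside the differentiability set of $F$, with $\rho_n\to 0$ uniformly on $K$.
\item Choose $x_0$ a density point of $K$, which determines $c$.
\item Set $\eta=|c|/4$ and $\delta=\delta(\eta)$, then take $2r<\delta$.
\end{enumerate}

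Lusin is unnecessary. For $x,y\in K$ with $|x-y|<\delta$, the quotient $\frac{F(y)-F(x)}{y-x}$ is $\eta$-close to both $F'(x)$ and $F'(y)$. Hence $|F'(x)-F'(x_0)|\le 2\eta$ on $K\cap[x_0-r,x_0+r]$, and this gives $|g(x)-g(y)|\ge(|c|-3\eta)|x-y|=\frac{|c|}{4}|x-y|$ there. Consequently $\mathcal L^1(g(K\cap[x_0-r,x_0+r]))\ge\frac{|c|}{4}\mathcal L^1(K\cap[x_0-r,x_0+r])>0$.
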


A central theme of this paper is that rectifiability can be detected from two suitably transverse nonlinear projections, and that this principle admits a quantitative multiscale analogue.

Our paper has three aims:
\begin{enumerate}
\item
We extend the classic two-projection theorem to a nonlinear setting, and show its conclusion still holds when the family of orthogonal projection maps is replaced by a family of nonlinear projections from a wide class.
See Section \ref{sec: gen two proj} and the main result, Theorem \ref{thm:two proj for general proj}.
\smallskip
\item
As an application, we verify that rectifiable $1$-sets have pinned distance sets with positive measure for all but possibly one exceptional pin; see Section \ref{app:Dist}. We obtain analogous results for curve projection operators in Section \ref{app:CP} and radial projections in Section \ref{app:RP}.
\smallskip
\item
We utilize a multiscale analysis to obtain a quantitative version of the two-projection theorem for a family of nonlinear projections in Theorem \ref{thm:quant curve two}. This result extends a theorem of Tao, and it provides a mechanism to bound the rectifiability constant. This serves as the opposite direction of the nonlinear quantitative Besicovitch theorem obtained by Davey and the second listed author in \cite{DT21}. These results are presented in Section \ref{sec:results II}, and further context is provided in Section \ref{sec:survey}.
\end{enumerate}

The results of this paper establish nonlinear analogues of classical topics in geometric measure theory and fractal geometry. Our nonlinear extensions place the two-projection theorem within the broader theory of nonlinear projections, which has recently played an important role in the study of distance sets, radial projections, curve projections, and related problems in fractal geometry. At a quantitative
level, our work builds on Tao's multiscale framework and connects nonlinear projection theory with quantitative rectifiability. In this sense, the present paper may be viewed as a bridge between qualitative projection theorems, quantitative rectifiability, and nonlinear projection theory.

To put our results into further context, we provide a summary of recent developments in the field in Section \ref{sec:survey}.
The proof of Theorem \ref{thm:two proj for general proj} (a two-projection theorem for generalized projections) is given in Section \ref{section:general two proj thm}, and an alternative proof using the area formula is provided in Section \ref{sec:contrapositive}.
The proof of Theorem \ref{thm:quant curve two} (a quantitative two-projection theorem for curve projections) is provided in Section \ref{section:proof}.

Some basic notation used throughout the paper follows.
\begin{dfn}[Asymptotic Notation]{}
    Let $X,Y\geq 0$. We write $X\lesssim Y$ if there exists a constant $C>0$ such that $X\leqslant CY$. We write $X\sim Y$ if $Y\lesssim X\lesssim Y$.
\end{dfn}

\medskip
\noindent {\bf Acknowledgment}.
 The authors wish to thank Paige Bright for many helpful comments and discussions on this article.

\section{Generalized two-projection theorems and applications}\label{sec: gen two proj}
In this section, we investigate how the two-projection theorem can be extended to more general projection families. We derive the classic two-projection theorem as a consequence, and we introduce additional conditions so that the original proof continues to hold in a broader setting.

Our main Theorem \ref{thm:two proj for general proj} essentially says that if two
sufficiently transverse projections of a set are both small, then the set
cannot contain a positive-measure rectifiable component.

\begin{dfn}[Determinant]{}
    Let $v_1,v_2\in\mathbb R^2$ be two vectors. We use $|v_1\wedge v_2|$ to denote the absolute value of the determinant of the matrix with columns $v_1,v_2$.
\end{dfn}

\begin{thm}[Two-Projection Theorem for Generalized Projections]{}\label{thm:two proj for general proj}
     Let $\Lambda\subseteq\mathbb R^2$ be a Borel set, and $P:\mathbb R^2\times \Lambda\to \mathbb R$ be a continuous function such that, for each $\lambda\in \Lambda$, the map $P_\lambda(x):=P(x,\lambda)$ is Lipschitz on every compact subset of $\mathbb R^2$.

    Let $E\subseteq \mathbb R^2$ be a compact set. Suppose there exists $\lambda,\lambda'\in \Lambda$ with $\lambda\ne\lambda'$ such that the following properties hold:
    \begin{enumerate}[\rm(1)]
        \item $\mathcal{L}^1(P_\lambda(E))=\mathcal{L}^1(P_{\lambda'}(E))=0$,
        \item the maps $P_\lambda$ and $P_{\lambda'}$ are differentiable at $\mathcal{H}^1$-a.e.\ $x\in E$. Moreover, $|\nabla P_\lambda(x)\wedge \nabla P_{\lambda'}(x)|\ne 0$ for $\mathcal{H}^1$-a.e.\ $x\in E$, that is, $\nabla P_\lambda(x)$ and $\nabla P_{\lambda'}(x)$ are not parallel for $\mathcal{H}^1$-a.e.\ $x\in E$.
    \end{enumerate}
    Then $E$ is purely $1$-unrectifiable.
\end{thm}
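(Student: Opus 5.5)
We argue by contradiction. Suppose $E$ is not purely $1$-unrectifiable. Then there exist a Lipschitz function $F:\mathbb R\to\mathbb R$ and an orthonormal pair $\omega_1,\omega_2$ such that the set
\[A:=\{t\in\mathbb R:\gamma(t)\in E\},\qquad \gamma(t):=t\omega_1+F(t)\omega_2,\]
has $\mathcal L^1(A)>0$. Because $E$ is compact, $A$ is closed and bounded. The plan is to study the two real functions $f:=P_\lambda\circ\gamma$ and $g:=P_{\lambda'}\circ\gamma$ on a neighborhood of $A$, which we may take to be a bounded interval $I$. Since $\gamma(I)$ is contained in a compact set on which $P_\lambda$ and $P_{\lambda'}$ are Lipschitz, both $f$ and $g$ are Lipschitz on $I$. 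By hypothesis (1), $\mathcal L^1(f(A))=\mathcal L^1(g(A))=0$.

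The first step is the one-dimensional fact that a Lipschitz function $f$ with $\mathcal L^1(f(A))=0$ satisfies $f'=0$ at $\mathcal L^1$-a.e.\ point of $A$. One way to see this is the one-dimensional area formula,
\[\int_A|f'|\,dt=\int_{\mathbb R}\#(A\cap f^{-1}(y))\,dy=0.\]
Alternatively, one can argue directly as Tao does. At a Lebesgue density point $t$ of $A$ where $f'(t)=c\neq0$ exists, $f$ maps $A\cap[t-r,t+r]$ onto a set of measure $\gtrsim |c|r$, which is a contradiction. The same conclusion holds for $g$.

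The second step, which is the main obstacle, is to relate $f'$ to $\nabla P_\lambda$ by a chain rule. This is delicate because $P_\lambda$ is only assumed differentiable $\mathcal H^1$-a.e.\ on $E$, and not along $\gamma$ in any stronger sense. We restrict to the set $A_0\subseteq A$ of points $t$ such that:
\begin{itemize}
\item $F$ is differentiable at $t$ (Rademacher);
\item $f'(t)=g'(t)=0$;
\item $P_\lambda$ and $P_{\lambda'}$ are differentiable at $\gamma(t)$ with nonparallel gradients.
\end{itemize}
We must check that $A_0$ has full measure in $A$. The map $\gamma$ is bi-Lipschitz, since $|\gamma(s)-\gamma(t)|\ge|s-t|$. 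Hence the image under $\gamma^{-1}$ of the $\mathcal H^1$-null exceptional subset of $E\cap\gamma(\mathbb R)$ is $\mathcal L^1$-null. At any $t\in A_0$, the honest chain rule applies: $P_\lambda$ is differentiable at $\gamma(t)$ and $\gamma$ is differentiable at $t$, so
\[0=f'(t)=\nabla P_\lambda(\gamma(t))\cdot\gamma'(t),\qquad 0=g'(t)=\nabla P_{\lambda'}(\gamma(t))\cdot\gamma'(t).\]

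The final step uses the transversality hypothesis (2). The vectors $\nabla P_\lambda(\gamma(t))$ and $\nabla P_{\lambda'}(\gamma(t))$ are linearly independent, so they span $\mathbb R^2$. A vector orthogonal to both must therefore vanish, giving $\gamma'(t)=0$. But $\gamma'(t)=\omega_1+F'(t)\omega_2$ has norm at least $1$, a contradiction. Since $\mathcal L^1(A_0)=\mathcal L^1(A)>0$, $A_0$ is nonempty, so the contradiction is reached and $E$ is purely $1$-unrectifiable.

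The only points requiring care are the measurability of $A$ and of $A_0$, and the transfer of null sets between $\mathcal H^1$ on the graph and $\mathcal L^1$ on the parameter line. Both are handled by the bi-Lipschitz property of $\gamma$.
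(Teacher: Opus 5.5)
Your proof is correct and is essentially the paper's argument run in contrapositive order. The paper picks a density point $x_0$ of $A$ where $F$ is differentiable and $\nabla P_\lambda$, $\nabla P_{\lambda'}$ exist and are non-parallel at $(x_0,F(x_0))$. Transversality then gives $\varphi'(x_0)\neq 0$ for $\varphi(x)=P_\lambda(x,F(x))$, say, and density, the Lipschitz bound and the intermediate value theorem force $\mathcal{L}^1(P_\lambda(E))>0$. That is exactly the ``direct'' alternative you give for your first step.

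There are two minor differences. You work with an arbitrary orthonormal frame, so you do not need the reduction to coordinate-axis graphs of Lemma~\ref{lem:equiv def of unrect}. You also make explicit the bi-Lipschitz transfer of $\mathcal{H}^1$-null exceptional sets on $E$ to $\mathcal{L}^1$-null sets in the parameter line, which the paper leaves implicit.
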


The proof of Theorem \ref{thm:two proj for general proj} is found in Section \ref{section:general two proj thm}.

We now consider several concrete applications of
Theorem \ref{thm:two proj for general proj} to establish results for orthogonal projections, curve projections, pinned distance sets, and radial projections.

\subsection{Application 1: orthogonal projections}\label{app:OP}
As the first application, we recover the classic two-projection theorem.

\begin{cor}[Two-Projection Theorem for Orthogonal Projections]{}
    Let $E\subseteq\mathbb R^2$ be a compact set. If there exists two non-antipodal $\omega,\omega'\in S^1$ such that $\mathcal{L}^1(\proj_\omega (E))=\mathcal{L}^1(\proj_{\omega'}(E))=0$, then $E$ is purely $1$-unrectifiable.
\end{cor}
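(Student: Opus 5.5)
We apply Theorem \ref{thm:two proj for general proj} with $\Lambda = S^1 \subseteq \mathbb R^2$ and $P(x,\lambda) := x\cdot\lambda$, so that $P_\omega = \proj_\omega$ (identifying the projection onto the line spanned by $\omega$ with the scalar $x\cdot\omega$, which does not affect Lebesgue measure). The function $P$ is jointly continuous, being a polynomial in $(x,\lambda)$. For each fixed $\omega$, the map $P_\omega$ is linear with $|P_\omega(x)-P_\omega(y)|\le |x-y|$, so it is globally $1$-Lipschitz and in particular Lipschitz on every compact set. Since $\Lambda = S^1$ is closed, it is Borel.

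Now take the given non-antipodal $\omega,\omega'$. We may assume $\omega\ne\omega'$. If $\omega=\omega'$, the hypothesis only gives a single projection and the statement is not meaningful. The usual reading of ``non-antipodal'' here is $\omega\neq\pm\omega'$, and we use that reading. Then:

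\begin{enumerate}[\rm(1)]
\item Hypothesis (1) of Theorem \ref{thm:two proj for general proj} is exactly the assumption $\mathcal{L}^1(\proj_\omega(E))=\mathcal{L}^1(\proj_{\omega'}(E))=0$.
\item For hypothesis (2), $P_\omega$ is linear, hence differentiable everywhere with $\nabla P_\omega(x)=\omega$ for every $x$. Therefore
\[|\nabla P_\omega(x)\wedge\nabla P_{\omega'}(x)| = |\omega\wedge\omega'| = |\sin\theta|,\]
where $\theta$ is the angle between $\omega$ and $\omega'$. This is nonzero exactly when $\omega\ne\pm\omega'$. It therefore holds at every $x\in E$, not merely $\mathcal{H}^1$-a.e.
\end{enumerate}

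Theorem \ref{thm:two proj for general proj} then yields that $E$ is purely $1$-unrectifiable.

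The only point requiring care is the interpretation of ``non-antipodal'' as excluding both $\omega'=\omega$ and $\omega'=-\omega$. If $\omega'=-\omega$, the two gradients are parallel, the determinant vanishes, and the hypothesis genuinely fails: a segment perpendicular to $\omega$ projects to a point under both maps, yet it is rectifiable. Everything else is a direct verification, so no step presents a real obstacle.
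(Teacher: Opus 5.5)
Your proposal is correct and is essentially the paper's proof: you apply Theorem \ref{thm:two proj for general proj} with $P_\omega(x)=x\cdot\omega$, observe that $\nabla P_\omega\equiv\omega$, and conclude that $|\omega\wedge\omega'|\neq 0$ for every $x\in E$ whenever $\omega\neq\pm\omega'$. Your checks of the standing hypotheses (that $\Lambda=S^1$ is Borel, that $P$ is continuous, and that each $P_\omega$ is Lipschitz) are details the paper leaves implicit.
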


\begin{proof}
By definition, $\nabla \proj_\omega(x)=\omega$ for every $x\in\mathbb R^2$ and $\omega\in S^1$. Furthermore, since $\omega\ne\pm\omega'$, $|\nabla \proj_\omega(x)\wedge \nabla \proj_{\omega'}(x)|=|\omega\wedge\omega'|\ne 0$ for every $x\in E$. Therefore, $E$ is purely $1$-unrectifiable.
\end{proof}

\subsection{Application 2: curve projections}\label{app:CP}
Curve projections were originally introduced by Simon and the second listed author to study Minkowski sum sets of the form $A+\Gamma$, where $A\subseteq \mathbb{R}^2$ and $\Gamma$ is a well-behaved curve \cite{ST17}.
Further applications and properties of these maps were later developed in \cite{BT20, CDT21, DT21, LMT26}.

\begin{dfn}[Curve Projections]\label{defn:curve proj}
    Let $\Gamma\subseteq \mathbb R^2$ be a curve. Given $\alpha\in\mathbb R$, define the \textit{curve projection} at $\alpha$ on $\Gamma$ as
    \begin{equation}\label{eqn:curve proj}
        \Phi_\alpha:\mathbb R^2\to\mathcal{P}(\mathbb R),\quad \Phi_\alpha(\mathbf{a}):=\left\{\beta\in\mathbb R:(\alpha,\beta)\in (\mathbf{a}+\Gamma)\cap\ell_\alpha\right\},
    \end{equation}
    where $\ell_\alpha$ denotes the vertical line $\{x=\alpha\}$ in $\mathbb R^2$.

    If $\Gamma$ can be expressed as the graph of a function over the $x$-axis, that is, $\Gamma=\left\{(t,\gamma(t))\in\mathbb R^2:t\in\mathbb R\right\}$ for some function $\gamma:\mathbb R\to\mathbb R$, we have
    \[\Phi_\alpha(\mathbf{a})=a_2+\gamma(\alpha-a_1).\]
    \begin{figure}[H]
    \centering
    \begin{tikzpicture}[scale=1.1]
            \draw[->] (-0.2,0) -- (4.5,0) node[right] {$x$};
            \draw[->] (0,-0.2) -- (0,4) node[above] {$y$};

            \draw[orange, thick] (2,0) -- (2,4) node[above, orange] {\small $\{x=\alpha\}$};

            \draw[red, thick, domain=-0.3:3, smooth, variable=\x]
            plot ({\x}, {3.5 - 0.4*(\x)^2});

            \node[red] at (0.9,3.6) {\small ${\mathbf{a}+\Gamma}$};

            \filldraw[black] (2,0) circle (1pt);
            \node[below] at (2,0) {\large $\alpha$};

            \draw[dashed] (2,1.9) -- (3,1.9);

            \filldraw[black] (2,1.9) circle (1.5pt);

            \draw[decorate,decoration={brace,amplitude=6pt}, thick] (2,0) -- (2,1.9);
            \node[left] at (1.9,0.95) {\normalsize $\Phi_\alpha(\mathbf{a})$};

            \node[fill=gray!20, rounded corners, font=\large] at (3.7,1.95)
            {$(\alpha, \Phi_\alpha(\mathbf{a}))$};
    \end{tikzpicture}
    \caption{$\Phi_\alpha(\mathbf{a})$ is the $y$-coordinate of the vertical slice of $\mathbf{a}+\Gamma$ at $\{x = \alpha\}$.}\label{curve proj}
    \end{figure}
\end{dfn}

With this set-up, the following is a consequence of Theorem \ref{thm:two proj for general proj}.
\begin{cor}[Two-Projection Theorem for Curve Projections]\label{cor:curve}
    Let $E\subseteq \mathbb R^2$ be a compact set, and $\Gamma=\{(t,\gamma(t)):t\in \mathbb R\}$ be a curve such that $\gamma$ is $C^1$ and $\gamma'$ is injective. If there exists $\alpha,\alpha'\in\mathbb R$ such that $\alpha\ne\alpha'$ and $\mathcal{L}^1(\Phi_\alpha(E))=\mathcal{L}^1(\Phi_{\alpha'}(E))=0$, then $E$ is purely $1$-unrectifiable.
\end{cor}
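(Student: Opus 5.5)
We derive Corollary \ref{cor:curve} directly from Theorem \ref{thm:two proj for general proj}. Take the parameter set $\Lambda=\mathbb R\times\{0\}\subseteq\mathbb R^2$, identified with $\mathbb R$ via $\alpha\mapsto(\alpha,0)$, which is Borel. Define
\[
P(\mathbf a,\alpha):=\Phi_\alpha(\mathbf a)=a_2+\gamma(\alpha-a_1).
\]

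First we check the standing hypotheses. Since $\gamma$ is $C^1$, $P$ is continuous on $\mathbb R^2\times\Lambda$. For fixed $\alpha$, the map $P_\alpha$ is $C^1$ on $\mathbb R^2$. Hence on any compact $K\subseteq\mathbb R^2$ its gradient is bounded, because $\gamma'$ is continuous on the compact set $\{\alpha-a_1:\mathbf a\in K\}$. By the mean value theorem applied on a ball containing $K$, $P_\alpha$ is therefore Lipschitz on $K$.

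Next come the hypotheses involving $\alpha\neq\alpha'$. Hypothesis (1) is exactly the assumption $\mathcal L^1(\Phi_\alpha(E))=\mathcal L^1(\Phi_{\alpha'}(E))=0$. For hypothesis (2), $P_\alpha$ and $P_{\alpha'}$ are differentiable everywhere, with
\[
\nabla P_\alpha(\mathbf a)=\bigl(-\gamma'(\alpha-a_1),\,1\bigr),\qquad \nabla P_{\alpha'}(\mathbf a)=\bigl(-\gamma'(\alpha'-a_1),\,1\bigr).
\]
Their determinant is
\[
|\nabla P_\alpha(\mathbf a)\wedge\nabla P_{\alpha'}(\mathbf a)|=|\gamma'(\alpha'-a_1)-\gamma'(\alpha-a_1)|.
\]
Because $\alpha\ne\alpha'$, the points $\alpha-a_1$ and $\alpha'-a_1$ are distinct. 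Injectivity of $\gamma'$ then makes this determinant nonzero for every $\mathbf a\in\mathbb R^2$, and in particular for every $\mathbf a\in E$. Theorem \ref{thm:two proj for general proj} now gives that $E$ is purely $1$-unrectifiable.

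The only point needing care is the formal setup: the theorem takes $\Lambda\subseteq\mathbb R^2$, so we must embed the real parameter as above. We also need the local Lipschitz bound, which follows from continuity of $\gamma'$ and is where the $C^1$ assumption is used. Everything else is a direct computation, and the transversality condition reduces exactly to the injectivity of $\gamma'$.
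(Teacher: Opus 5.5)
Your proposal is correct and follows the paper's own proof. Both apply Theorem~\ref{thm:two proj for general proj} with $P_\alpha=\Phi_\alpha$, use the $C^1$ hypothesis for local Lipschitz continuity, compute $\nabla\Phi_\alpha(\mathbf a)=(-\gamma'(\alpha-a_1),1)$, and reduce non-parallelism of the gradients to injectivity of $\gamma'$. One small fix: for the mean value theorem you need the gradient bound on the compact closed ball containing $K$, not just on $K$, and the same continuity argument gives it.
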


\begin{proof}
    Since $\gamma$ is $C^1$, $\Phi_\alpha$ is Lipschitz on every compact subset of $\mathbb R^2$. Moreover,
    \[\nabla\Phi_\alpha(\mathbf{a})=(-\gamma'(\alpha-a_1),1),\quad\text{for all}\;\;\mathbf{a}\in \mathbb R^2,\alpha\in \mathbb R.\]
    Since $\gamma'$ is injective, $\gamma'(\alpha-a_1)\ne \gamma'(\alpha'-a_1)$ for every $a_1\in\mathbb R$. It follows that $|\nabla\Phi_\alpha (\mathbf{a})\wedge \nabla\Phi_{\alpha'}(\mathbf{a})|\ne 0$ for every $\mathbf{a}\in E$. Therefore, $E$ is purely $1$-unrectifiable.
\end{proof}

Corollary \ref{cor:curve} can also be obtained as a consequence of Theorem \ref{thm:quant curve two}, our quantitative two-projection theorem. We record this alternative proof below in Corollary \ref{altCor}.

Observe that the hypothesis that $\gamma'$ is injective is necessary; for instance the statement is no longer true when $\Gamma=\{(t,t^3):t\in\mathbb R\}$.
Corollary \ref{cor:curve} has implications for large unions of curves; see the follow-up paper \cite{BBMT26} for more details.

\subsection{Application 3: pinned distance sets}\label{app:Dist}
Let $E\subseteq \mathbb R^2$ be a compact set. For each $y\in \mathbb R^2$, the \textit{pinned distance set} of $E$ at $y$ is defined as
\[\Delta_y(E):=\{|x-y|:x\in E\}.\]

\begin{cor}[Two-Projection Theorem for Pinned Distance Sets with Constraint]{}
    Let $E\subseteq \mathbb R^2$ be a compact set. If there exists $y,y'\in\mathbb R^2$ with $y\ne y'$,
    and the following properties hold:
    \begin{enumerate}[\rm(1)]
        \item $\mathcal{H}^1(E\cap \ell_{y,y'})=0$, where $\ell_{y,y'}$ is the line through $y$ and $y'$,
        \item and $\mathcal{L}^1(\Delta_y(E))=\mathcal{L}^1(\Delta_{y'}(E))=0$,
    \end{enumerate}
    then $E$ is purely $1$-unrectifiable.
\end{cor}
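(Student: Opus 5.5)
The plan is to apply Theorem \ref{thm:two proj for general proj} with $\Lambda=\mathbb R^2$ and $P(x,\lambda)=|x-\lambda|$, so that $P_y(E)=\Delta_y(E)$. The function $P$ is continuous on $\mathbb R^2\times\mathbb R^2$, and each $P_y$ is $1$-Lipschitz on all of $\mathbb R^2$, so the standing hypotheses hold. Hypothesis (1) of the theorem is exactly assumption (2) of the corollary. It then remains to verify hypothesis (2), namely differentiability and transversality of the gradients at $\mathcal{H}^1$-a.e.\ point of $E$.

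Away from the pin, $P_y$ is smooth with
\[\nabla P_y(x)=\frac{x-y}{|x-y|},\qquad x\ne y,\]
and similarly for $y'$. Hence both maps are differentiable on $E\setminus\{y,y'\}$. The removed set has $\mathcal{H}^1$-measure zero, since it is finite.

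For $x\notin\{y,y'\}$, the unit vectors $\frac{x-y}{|x-y|}$ and $\frac{x-y'}{|x-y'|}$ are parallel exactly when $x-y$ and $x-y'$ are linearly dependent, that is, when $x$, $y$ and $y'$ are collinear. Since $y\neq y'$, this means $x\in\ell_{y,y'}$. So the set where differentiability or transversality fails is contained in $(E\cap\ell_{y,y'})\cup\{y,y'\}$. This set is $\mathcal{H}^1$-null by assumption (1) together with the finiteness of $\{y,y'\}$.

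All hypotheses of Theorem \ref{thm:two proj for general proj} are therefore satisfied, and $E$ is purely $1$-unrectifiable. The only step that needs any care is identifying the degenerate set where the gradients are parallel as the line through the two pins, which is exactly what constraint (1) rules out. There is no real obstacle. It is worth noting that constraint (1) is genuinely needed, since on $\ell_{y,y'}$ both distance functions restrict to essentially the same one-dimensional coordinate and the two projections fail to be transverse there.
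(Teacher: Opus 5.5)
Your proof is correct and essentially the paper's: both apply Theorem~\ref{thm:two proj for general proj} directly. The paper takes $f_y(x)=|x-y|^2$, which is differentiable everywhere but needs the extra observation that $\mathcal{L}^1(\Delta_y(E))=0$ forces $\mathcal{L}^1(f_y(E))=0$; your choice $|x-y|$ gives $P_y(E)=\Delta_y(E)$ exactly, at the cost of discarding the $\mathcal{H}^1$-null set $\{y,y'\}$. Your closing remark is wrong, however: constraint (1) is needed only to verify hypothesis (2) of that theorem, not for the conclusion, since on $\ell_{y,y'}$ each distance function is an isometry on either side of its pin, and the paper removes (1) entirely in Corollary~\ref{cor:two proj for pinned dist} via Lemma~\ref{lem:tangent vec of Lip along a line} (it is for radial projections that such a constraint is genuinely necessary).
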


\begin{proof}
    Let $f:\mathbb R^2\times \mathbb R^2\to\mathbb R$ be defined as
    \[f(x,y):=|x-y|^2,\quad\text{for all}\;\;x,y\in\mathbb R^2.\]
    For each $y\in \mathbb R^2$, we write $f_y(x):=f(x,y)$. Then $f_y$ is Lipschitz on every compact subset of $\mathbb{R}^2$. Moreover,
    \[\nabla f_y (x)=2(x-y),\quad\text{for all}\;\;x,y\in\mathbb R^2.\]
    
    Since $t\rightarrow t^2$ is bi-Lipschitz on every bounded interval that does not contain $0$, we 
    observe that $\mathcal{L}^1(\Delta_y(E))=0$ if and only if $\mathcal{L}^1(f_y(E))=0$. Since $\mathcal{H}^1\left(E\cap\ell_{y,y'}\right)=0$, the vectors $x-y$ and $x-y'$ are not parallel for $\mathcal{H}^1$-a.e.\ $x\in E$. It follows that $|\nabla f_y (x)\wedge\nabla f_{y'}(x)|\ne 0$ for $\mathcal{H}^1$-a.e.\ $x\in E$. Therefore, $E$ is purely $1$-unrectifiable.
\end{proof}

In fact, the conclusion still holds even without the constraint $\mathcal{H}^1\left(E\cap\ell_{y,y'}\right)=0$. This can be seen by a simple geometric lemma below.
\begin{lem}[Tangent Direction of Lipschitz Graph Along a Line]{}\label{lem:tangent vec of Lip along a line}
    Let $E\subseteq\mathbb R^2$ be a compact set, and $\zeta=\{(x,F(x)):x\in\mathbb R\}$ be a Lipschitz graph. Fix a line $\ell\subseteq\mathbb R^2$, and suppose $\mathcal{H}^1\left(E\cap\zeta\cap \ell\right)>0$. Let $A_\ell=\{x\in\mathbb R:(x,F(x))\in E\cap \ell\}$. Then $F'(x)$ exists, and the tangent vector $(1,F'(x))$ is parallel to $\ell$ for $\mathcal{L}^1$-a.e.\ $x\in A_\ell$. As a result, for $\mathcal{L}^1$-a.e.\ $x\in A_\ell$ and every $y\in \ell$ with $(x,F(x))\ne y$, we have
    \[((x,F(x))-y)\cdot (1,F'(x))\ne 0.\]
    An analogous conclusion holds for the case $\zeta=\{(F(x),x):x\in\mathbb R\}$.
\end{lem}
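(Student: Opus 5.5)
The plan is to reduce everything to one-variable real analysis on the set $A_\ell$, using Rademacher's theorem together with the Lebesgue density theorem, in the same spirit as the classical proof of Theorem \ref{thm:twoproj}. Since $\zeta$ is a Lipschitz graph, the map $x\mapsto (x,F(x))$ is bi-Lipschitz onto $\zeta$. Hence $\mathcal{H}^1(E\cap\zeta\cap\ell)>0$ gives $\mathcal{L}^1(A_\ell)>0$. The set $A_\ell$ is also measurable: it is closed, being the preimage of the compact set $E\cap\ell$ under a continuous map. By Rademacher's theorem, $F'(x)$ exists for $\mathcal{L}^1$-a.e.\ $x$.

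The next step is to pin down $F'$ on $A_\ell$. Note that $\ell$ cannot be vertical: a vertical line meets the graph in at most one point, which would contradict $\mathcal{L}^1(A_\ell)>0$. So write $\ell=\{(x,mx+b):x\in\mathbb R\}$. For $x\in A_\ell$ we then have $F(x)=mx+b$, that is, $A_\ell\subseteq\{x: F(x)-mx-b=0\}$.

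Now take $x$ to be both a Lebesgue density point of $A_\ell$ and a point of differentiability of $F$; almost every point of $A_\ell$ qualifies. Since $x$ is a density point, it is an accumulation point of $A_\ell$, so there are $x_n\in A_\ell$ with $x_n\to x$ and $x_n\neq x$. Along this sequence
\[\frac{F(x_n)-F(x)}{x_n-x}=m,\]
and differentiability forces $F'(x)=m$. Thus the tangent vector $(1,F'(x))=(1,m)$ is parallel to $\ell$. (We only use that $x$ is not an isolated point of $A_\ell$; almost every point of a positive-measure set has this property.)

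For the final claim, fix such an $x$ and let $y\in\ell$ with $y\neq p:=(x,F(x))$. Both $p$ and $y$ lie on $\ell$, so $p-y$ is a nonzero multiple of the direction vector $(1,m)$. Therefore
\[(p-y)\cdot(1,F'(x))=c\,|(1,m)|^2\neq 0,\]
where $c\neq 0$ is the scalar with $p-y=c(1,m)$.

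The case $\zeta=\{(F(x),x)\}$ is identical with the coordinates swapped. There, horizontal lines are excluded in place of vertical ones, and we write $\ell$ as $\{(mx+b,x)\}$. No step is a real obstacle. The only points needing care are excluding a vertical $\ell$ (respectively a horizontal $\ell$ in the swapped case) and checking the measurability of $A_\ell$, both handled above.
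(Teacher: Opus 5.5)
Your proposal is correct and follows essentially the same argument as the paper. At a point of $A_\ell$ that is both a density point and a differentiability point of $F$, you differentiate a function that vanishes on $A_\ell$ along a sequence in $A_\ell$. The paper uses $g(x)=((x,F(x))-y)\cdot v$ with $v$ normal to $\ell$, while you use $F(x)-mx-b$ after ruling out vertical lines; these are the same up to scaling. You also verify explicitly the final nonvanishing dot-product claim, which the paper leaves implicit.
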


\begin{proof}
    Without loss of generality, we may assume $E\subseteq\ell$. We further assume $\zeta\ne\ell$, so that $\mathcal{H}^1(\ell\setminus \zeta)>0$; otherwise, the conclusion follows immediately. Since $\mathcal{H}^1\left(E\cap\zeta\right)>0$, $\mathcal{L}^1(A_\ell)>0$. Define  
    \[B_\ell:=\{x\in A_\ell:x\ \text{is a density point of $A_\ell$ and $F'(x)$ exists}\}.\]
    By the Lebesgue differentiation theorem and Rademacher's differentiation theorem, $\mathcal{L}^1(B_\ell)=\mathcal{L}^1(A_\ell)$. We show that $(1,F'(x))$ is parallel to $\ell$ for every $x\in B_\ell$.

    Fix $x_0\in B_\ell$ and $y\in\ell\setminus\zeta$. Let $v$ be a normal vector of $\ell$, and let $g:\mathbb R\to\mathbb R$ be defined as
    \[g(x):=((x,F(x))-y)\cdot v.\]
    Then $g'(x_0)$ exists. Since $\mathcal{L}^1(A_\ell)>0$ and $x_0$ is a density point of $A_\ell$, we can extract a sequence $\{x_n\}_{n\in\mathbb Z^+}\subseteq A_\ell$ such that $x_n\to x_0$ as $n\to\infty$. Since $(x,F(x))\in\ell$ for every $x\in A_\ell$, $g(x_n)=g(x_0)=0$. This suggests that
    \[g'(x_0)=\lim\limits_{n\to\infty} \frac{g(x_0)-g(x_n)}{x_0-x_n}=0.\]
    Note that $g'(x_0)=(1,F'(x_0))\cdot v$, so $(1,F'(x_0))$ is parallel to $\ell$.
\end{proof}

By Lemma \ref{lem:tangent vec of Lip along a line}, we obtain the following two-projection theorem for pinned distance sets. The proof of Corollary \ref{cor:two proj for pinned dist} is similar in nature to that of Theorem \ref{thm:two proj for general proj}, and for this reason we include both proofs in Section \ref{section:general two proj thm}.
\begin{cor}[Two-Projection Theorem for Pinned Distance Sets]{}\label{cor:two proj for pinned dist}
    Let $E\subseteq\mathbb R^2$ be a compact set. If there exists $y,y'\in \mathbb R^2$ with $y\ne y'$ and $\mathcal{L}^1(\Delta_y(E))=\mathcal{L}^1(\Delta_{y'}(E))=0$, $E$ is purely $1$-unrectifiable.
\end{cor}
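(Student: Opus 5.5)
We argue by contradiction. Suppose $E$ is not purely $1$-unrectifiable. Then there is a Lipschitz graph $\zeta$ with $\mathcal{H}^1(E\cap\zeta)>0$. After a rotation (which changes $y,y'$ but preserves distances) we may write $\zeta=\{(x,F(x)):x\in\mathbb R\}$ with $F$ Lipschitz. Let $A=\{x\in\mathbb R:(x,F(x))\in E\}$, so $\mathcal{L}^1(A)>0$, and let $\Psi(x)=(x,F(x))$. We split $A$ according to whether $\Psi(x)$ lies on the line $\ell=\ell_{y,y'}$. Since $A$ has positive measure, either $A_\ell=\{x\in A:\Psi(x)\in\ell\}$ has positive measure, or $A\setminus A_\ell$ does.

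\emph{Case 1: $\mathcal{L}^1(A_\ell)>0$.} By Lemma \ref{lem:tangent vec of Lip along a line}, for a.e.\ $x\in A_\ell$ the derivative $F'(x)$ exists and
\[
(\Psi(x)-y)\cdot(1,F'(x))\neq 0 \quad\text{whenever } \Psi(x)\neq y .
\]
The single point $\Psi(x)=y$ is hit by at most one $x$. Set $h(x)=|\Psi(x)-y|^2$, which is Lipschitz on the compact set $A$ (after restricting to a bounded piece). Then
\[
h'(x)=2(\Psi(x)-y)\cdot(1,F'(x))\neq 0
\]
for a.e.\ $x\in A_\ell$. The key analytic step is the standard fact that a Lipschitz function $h$ on $\mathbb R$ maps any set of positive measure on which $h'$ exists and is nonzero to a set of positive measure. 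To prove it, decompose $\{x: |h'(x)|>1/k\}$ into pieces and apply Lebesgue density and a Vitali/area-formula argument, or simply invoke the one-dimensional area formula $\int_{\mathbb R}\#(h^{-1}(t)\cap B)\,dt=\int_B|h'|>0$. Hence $\mathcal{L}^1(f_y(E))\ge\mathcal{L}^1(h(A_\ell))>0$. Since $f_y=|\cdot-y|^2$ and squaring is locally bi-Lipschitz away from $0$, this gives $\mathcal{L}^1(\Delta_y(E))>0$, a contradiction. In this case only one pin is needed.

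\emph{Case 2: $\mathcal{L}^1(A\setminus A_\ell)>0$.} For $x$ here, $\Psi(x)\notin\ell$, so $\Psi(x)-y$ and $\Psi(x)-y'$ are not parallel. Let $h(x)=|\Psi(x)-y|^2$ and $h^\ast(x)=|\Psi(x)-y'|^2$. By the same area-formula fact, $h'=0$ a.e.\ on $A\setminus A_\ell$ and likewise $(h^\ast)'=0$ a.e.; otherwise one of $\Delta_y(E),\Delta_{y'}(E)$ has positive measure. Thus for a.e.\ such $x$ the tangent $(1,F'(x))$ is orthogonal to both $\Psi(x)-y$ and $\Psi(x)-y'$. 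These two vectors span $\mathbb R^2$, which forces $(1,F'(x))=0$, an impossibility. This mirrors the proof of Theorem \ref{thm:two proj for general proj}.

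The main obstacle is the one-dimensional claim that $\mathcal{L}^1(h(B))=0$ implies $h'=0$ a.e.\ on $B$ for Lipschitz $h$. I would prove it via the area formula, or directly: at a density point $x_0$ of $B$ with $|h'(x_0)|=c>0$, the set $h(B\cap I)$ for a small interval $I$ around $x_0$ has measure at least roughly $c|I|/2$ by density, using a Vitali covering. A secondary technical point is measurability of $A$ and $A_\ell$, which follows from compactness of $E$ and continuity of $\Psi$.
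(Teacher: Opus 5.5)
Your proof is correct and follows the paper's argument. You use the same split of $A$ according to whether $(x,F(x))$ lies on $\ell_{y,y'}$, Lemma \ref{lem:tangent vec of Lip along a line} on the on-line part (where one pin suffices), and the non-parallelism of $\Psi(x)-y$ and $\Psi(x)-y'$ off the line. The differences are minor: you reduce to a graph over the $x$-axis by a rotation instead of via Lemma \ref{lem:equiv def of unrect}, and you replace the density-point and intermediate-value step from the proof of Theorem \ref{thm:two proj for general proj} with the one-dimensional area formula.
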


We note that a higher dimensional variant of Corollary \ref{cor:two proj for pinned dist} is obtained in \cite{BBMT26} for pinned distance sets corresponding to $E\subseteq \mathbb{R}^d$ for $d\geqslant 2$. While the proof of Corollary \ref{cor:two proj for pinned dist} relies on a direct modification of the original proof of the two-projection theorem, the proof in \cite{BBMT26} depends on generalizing an inequality of Federer, which gives control of the $\mathcal{H}^m$-measure of rectifiable sets in terms of their projections onto orthogonal systems of $m$-flats. For further references on pinned distance sets, see \cite{BFOP26, IosTayUri, Liu20, Shmerkin}.

\subsection{Application 4: radial projections}\label{app:RP}
Now we consider a variant of the two-projection theorem for radial projections, and we begin with an example that shows some caution and restrictions are required.

Given $x\in\mathbb R^2$, the radial projection $\pi_x:\mathbb R^2\setminus\{x\}\to S^1$ is defined as
\[\pi_x(y):=\frac{y-x}{|y-x|}.\]

\begin{ex}[Cautionary Example for Radial Projections]{}
    The two-projection theorem can fail for radial projections. To see this, let $x,y \in \mathbb{R}^2$ with $x\ne y$, and let $\ell_{x,y}$ be the line through them. Suppose $E$ is the line segment from $x$ to $y$ given by
    \[E=\{tx+(1-t)y:t\in [0,1]\}\subseteq \ell_{x,y}.\]
    Certainly $E$ is not purely $1$-unrectifiable but rather is $1$-rectifiable. Choose $x',y'\in \ell_{x,y}\setminus E$ with $|x-x'|=|y-y'|=1$. For every $a\in E$, we have $\pi_{x'}(a)=\frac{y-x}{|y-x|}$ and $\pi_{y'}(a)=\frac{x-y}{|x-y|}$, so that $\sigma(\pi_{x'}(E))=\sigma(\pi_{y'}(E))=0$. This serves as an example of a rectifiable set with two zero measure projections.
\end{ex}

Orponen and Sahlsten \cite{OS11} showed that the two-projection theorem holds for radial projections under an additional geometric constraint. In fact, they proved that the above counterexample is essentially the only setting in which the two-projection theorem could fail.
\begin{thm}[Two-Projection Theorem for Radial Projections, Theorem~2.1 in \cite{OS11}]{}
    Let $E\subseteq\mathbb{R}^2$ be a compact set. If there exists $x,y\in\mathbb R^2$ with $x\ne y$ such that the following properties hold:
    \begin{enumerate}[\rm(1)]
        \item $\mathcal{H}^1(E\cap \ell_{x,y})=0$, where $\ell_{x,y}$ is the line through $x$ and $y$, and
        \item $\sigma(\pi_x(E))=\sigma(\pi_y(E))=0$,
    \end{enumerate}
    then $E$ is purely $1$-unrectifiable.
\end{thm}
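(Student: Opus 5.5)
We deduce this from Theorem \ref{thm:two proj for general proj}, in the same way as the constrained pinned-distance corollary. First we localize away from the pins. Write $E\setminus\{x,y\}=\bigcup_n E_n$, where $E_n:=\{z\in E:|z-x|\geq 1/n,\ |z-y|\geq 1/n\}$ is compact. Pure $1$-unrectifiability is preserved under countable unions and deletion of finitely many points, so it suffices to show each $E_n$ is purely $1$-unrectifiable. On $E_n$ we replace the $S^1$-valued map $\pi_x$ by a real-valued one. Cover $S^1$ by finitely many open arcs $I_j$ of length less than $\pi$. On each arc pick a smooth angle chart $\theta_j:I_j\to\mathbb R$, which is bi-Lipschitz onto its image. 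Then $\sigma(\pi_x(E))=0$ forces $\mathcal L^1(\theta_j(\pi_x(E_n)\cap I_j))=0$. Split $E_n$ further into the compact pieces $E_{n,j,k}:=\{z\in E_n:\pi_x(z)\in \overline{I'_j},\ \pi_y(z)\in\overline{I'_k}\}$, using slightly shrunken closed arcs $\overline{I'_j}\subset I_j$ that still cover $S^1$.

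On a fixed piece $K=E_{n,j,k}$, set $\Lambda=\{x,y\}$. Define $P_x:=\theta_j\circ\pi_x$ and $P_y:=\theta_k\circ\pi_y$ near $K$, and extend them to continuous functions on $\mathbb R^2$ that are locally Lipschitz. Any extension works, since only the values and derivatives on $K$ matter. These maps are smooth on a neighborhood of $K$, and hypothesis (2) gives $\mathcal L^1(P_x(K))=\mathcal L^1(P_y(K))=0$. For the gradients, $\nabla(\theta\circ\pi_x)(z)$ is a nonzero multiple of $(z-x)^\perp/|z-x|^2$, and similarly for $y$. So $\nabla P_x(z)\wedge\nabla P_y(z)\neq0$ exactly when $z-x$ and $z-y$ are not parallel, that is, when $z\notin\ell_{x,y}$. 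Hypothesis (1) says $\mathcal H^1(K\cap\ell_{x,y})=0$, so the non-parallel condition holds $\mathcal H^1$-a.e. on $K$. Theorem \ref{thm:two proj for general proj} then shows $K$ is purely $1$-unrectifiable, and taking the countable union finishes the proof.

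The main obstacle is making sure the passage from $S^1$-valued radial projections to the real-valued framework of Theorem \ref{thm:two proj for general proj} is legitimate. Three things must be checked: that the theorem only uses $P$ near the compact set, so arbitrary extensions are harmless; that the null-measure condition transfers through the bi-Lipschitz charts; and that continuity in the parameter is trivial because $\Lambda$ is a two-point set. If the theorem's hypotheses turn out to be awkward to meet literally, the fallback is to rerun its density-point argument directly. Take a Lipschitz graph $\zeta$ with $\mathcal H^1(K\cap\zeta)>0$, and work at a density point $t$ where the graph parametrization is differentiable. The tangent $(1,F'(t))$ cannot be orthogonal to both $(z-x)^\perp$ and $(z-y)^\perp$ when these are independent. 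Hence one of $t\mapsto P(\zeta(t))$ has nonzero derivative on a positive-measure set, and its image then has positive measure, which is a contradiction.
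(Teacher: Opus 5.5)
Your argument is correct. The paper does not prove this theorem itself; it cites Orponen and Sahlsten. It only remarks that the density-point proof of Theorem~\ref{thm:two proj for general proj} can be adapted, using $|\pi_x(a)-\pi_x(b)|\sim\theta_x(a,b)$ to pass between the circle and the angle. Your fallback is exactly that adaptation.

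Your main route instead applies Theorem~\ref{thm:two proj for general proj} verbatim. You remove the pins and cut $E\setminus\{x,y\}$ into countably many compact pieces that lie inside angle-chart domains for both $\pi_x$ and $\pi_y$. You then extend $\theta_j\circ\pi_x$ and $\theta_k\circ\pi_y$ off a neighborhood of each piece by a cutoff, which leaves the gradients at points of the piece unchanged. Those gradients are $\pm(z-x)^\perp/|z-x|^2$ and $\pm(z-y)^\perp/|z-y|^2$, which are non-parallel exactly when $z\notin\ell_{x,y}$. Hypothesis (1) then supplies the $\mathcal{H}^1$-a.e.\ transversality.

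The black-box route spares you from redoing the intermediate value and density arguments for circle-valued maps, where wrap-around would need care. Its cost is the decomposition and extension bookkeeping. The adapted proof needs no decomposition, because it is local at a single density point. That point can be chosen off $\ell_{x,y}\cup\{x,y\}$, and it then automatically sits inside one chart for each pin.
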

Their proof is similar in spirit to the proof of Theorem \ref{thm:two proj for general proj}, and it relies on the local structure of density points. In fact, the proof of Theorem \ref{thm:two proj for general proj} can be adapted to prove the above theorem. The key observation is that for $a,b\in \mathbb R^2$, we have
\[|\pi_x(a)-\pi_{x}(b)|\sim \theta_x(a,b),\]
where $\theta_x(a,b)\in[0,\pi]$ is the angle between the vectors $a-x$ and $b-x$.

\section{Quantitative two-projection theorem for curve projections }\label{sec:results II}
We now turn to our aim of quantifying the two-projection theorem for generalized projections.  Before we state the main result of this section, Theorem \ref{thm:quant curve two}, we require a quantified notion of rectifiability.
\subsection{Rectifiability constant}
The \textit{rectifiability constant} was introduced by Tao \cite{Tao09} in his study of asymptotic upper bounds for the Favard length of arbitrary unrectifiable sets.
The rectifiability constant provides a framework that measures how closely a planar set can be approximated by Lipschitz graphs. 
In this paper, we adopt this notion of the rectifiability constant - with some minor modifications - as a basic tool.

We first give a simplified characterization of unrectifiability. By the implicit function theorem, Whitney's extension theorem, and Lusin's theorem, every Lipschitz graph $\zeta$ can be locally expressed as a countable union of horizontal or vertical Lipschitz graphs, up to an $\mathcal{H}^1$-null set. Each of these graphs can be parametrized either with respect to the first coordinate, $\{(x,F(x)):x\in I\}$, or with respect to the second coordinate, $\{(F(x),x):x\in I\}$, where $I$ is a small interval. As a result, it suffices to test the unrectifiability with respect to Lipschitz graphs parametrized with the standard two coordinate axes.

\begin{lem}[Equivalent Definition of Unrectifiability]{}\label{lem:equiv def of unrect}
    Given $E\subseteq\mathbb R^2$, $E$ is purely $1$-unrectifiable if and only if $\mathcal{H}^1\left(E\cap \zeta\right)=0$ for every Lipschitz graph $\zeta\subseteq\mathbb R^2$ parametrized with the standard two coordinate axes. That is
    \[\mathcal{L}^1\left(\left\{x\in\mathbb R:(x,F(x))\in E\right\}\right)=\mathcal{L}^1\left(\left\{x\in\mathbb R:(F(x),x)\in E\right\}\right)=0\]
    for every Lipschitz function $F:\mathbb R \to \mathbb R$.
\end{lem}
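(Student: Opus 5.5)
The forward direction is immediate: a graph $\{(x,F(x))\}$ or $\{(F(x),x)\}$ with $F$ Lipschitz is a Lipschitz graph in the sense of the earlier definition, using the orthonormal pair $(e_1,e_2)$ or $(e_2,e_1)$. Moreover, $x\mapsto (x,F(x))$ is bi-Lipschitz onto its image. Hence $\mathcal{H}^1(E\cap\zeta)=0$ for such graphs is equivalent to the vanishing of the $\mathcal{L}^1$-measures of the parameter sets. So the real content is the converse: if $E$ meets every coordinate-axis Lipschitz graph in an $\mathcal{H}^1$-null set, then $E$ meets every Lipschitz graph in an $\mathcal{H}^1$-null set.

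For the converse, I fix an arbitrary Lipschitz graph $\zeta=\{x\omega_1+F(x)\omega_2\}$ and suppose for contradiction that $\mathcal{H}^1(E\cap\zeta)>0$. I parametrize by $\gamma(x)=x\omega_1+F(x)\omega_2$. This map is bi-Lipschitz, so $A=\gamma^{-1}(E\cap\zeta)$ has positive $\mathcal{L}^1$ measure.

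By Rademacher, $\gamma'(x)=\omega_1+F'(x)\omega_2$ exists for a.e.\ $x$. This tangent vector is nonzero, so for each such $x$ at least one of its coordinates $\gamma'(x)\cdot e_1$, $\gamma'(x)\cdot e_2$ has absolute value at least $|\gamma'(x)|/\sqrt2$. Pigeonholing over a countable family, I find a coordinate, say $e_1$, and a subset $A'\subseteq A$ of positive measure on which $|\gamma'\cdot e_1|\ge c>0$ with $\gamma'$ existing. By the Lusin-type step I then shrink $A'$ to a compact set $K$ of positive measure on which the difference quotients of $\gamma_1=\gamma\cdot e_1$ are uniformly controlled. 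Concretely, by Egorov applied to difference quotients, there is $\delta>0$ such that $|\gamma_1(s)-\gamma_1(t)|\ge \tfrac c2|s-t|$ for all $s,t\in K$ with $|s-t|<\delta$. Intersecting $K$ with an interval of length less than $\delta$ keeps positive measure.

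On this piece $\gamma_1$ is injective with a Lipschitz inverse. Hence $\gamma(K)=\{(u,G(u)) : u\in\gamma_1(K)\}$, where $G=\gamma_2\circ\gamma_1^{-1}$ is Lipschitz on the compact set $\gamma_1(K)$. I extend $G$ to a Lipschitz function on $\mathbb R$ by the McShane/Kirszbraun extension (or piecewise-linearly on the gaps); this is where the Whitney extension mentioned in the text enters. Then $\gamma(K)\subseteq E$ lies on the coordinate graph of $G$, and $\mathcal{L}^1(\gamma_1(K))\ge\tfrac c2\mathcal{L}^1(K)>0$. This contradicts the hypothesis. If instead the $e_2$-coordinate is the large one, the same argument produces a graph of the form $(G(u),u)$.

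The main obstacle is the uniform lower bound for the difference quotients, since the derivative lower bound is only pointwise. I handle it by defining, for each $n$, the set of $x\in A'$ with $|\gamma_1(s)-\gamma_1(x)-\gamma_1'(x)(s-x)|\le \tfrac c4|s-x|$ for all $|s-x|<1/n$. These sets increase to $A'$, so one has positive measure. Restricting further to a subinterval of length less than $1/n$ gives the two-point bound for pairs $s,t$ in that set.
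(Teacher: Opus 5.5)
Your proof is correct, and its overall strategy is the one the paper has in mind: cut $\zeta$ into pieces on which the tangent $\gamma'=\omega_1+F'\omega_2$ has a substantial component along $e_1$ or $e_2$, then reparametrize each piece as a graph over that axis.

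The paper only sketches this, and it proceeds by first upgrading regularity. Lusin's theorem together with Whitney's extension theorem replaces $F$ by a $C^1$ function off a set of small measure. The implicit function theorem then inverts the relevant coordinate locally. The result is stated as a decomposition of every Lipschitz graph, up to an $\mathcal{H}^1$-null set, into countably many horizontal or vertical Lipschitz graphs.

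You never pass to $C^1$. Your sets $C_n$ (where the first-order Taylor error of $\gamma_1$ at $x$ is at most $\tfrac{c}{4}|s-x|$ for $|s-x|<1/n$) increase to $A'$. Membership of a single endpoint $t$ in $C_n$, together with $|\gamma_1'(t)|\ge c$, already gives $|\gamma_1(s)-\gamma_1(t)|\ge \tfrac{3c}{4}|s-t|$ whenever $|s-t|<1/n$. So $\gamma_1$ has a Lipschitz inverse on $C_n\cap I$, and McShane's extension, which needs no derivative data unlike Whitney's, produces the coordinate graph.

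Your route is more elementary and self-contained. The paper's route hands the work to standard theorems and states the stronger decomposition. Your method also yields that decomposition: the pieces $C_n\cap I$, taken over all $n$, a countable cover of $\mathbb{R}$ by intervals $I$ of length $<1/n$, and both coordinates, exhaust $A$ up to the Rademacher null set.

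Two small remarks. First, since $|\gamma'(x)|^2=1+F'(x)^2\geqslant 1$, you can take $c=1/\sqrt{2}$ outright, with no countable pigeonhole. Second, the passage to a compact $K$ presumes $A$ is measurable. That holds for Borel $E$, which is the paper's setting, but it is not needed: Lebesgue outer measure is continuous along increasing sequences, and Lipschitz maps distort outer measure by bounded factors, so you may work with $A'\cap C_n\cap I$ directly.
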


\begin{dfn}[Lipschitz Constant]{}
    Let $F:\mathbb R\to\mathbb R$ be a Lipschitz function. The \textit{Lipschitz constant} $\Lip(F)$ is defined as
    \[\Lip(F)=\sup_{x\ne y} \frac{|F(x)-F(y)|}{|x-y|}.\]
\end{dfn}

\begin{dfn}[Rectifiability Constant]\label{defn: rect constant}
    Let $E\subseteq\mathbb R^2$, $\varepsilon>0$, $I\subseteq \mathbb R$ be an interval, $(\omega_1,\omega_2)\in S^1\times S^1$ be an orthonormal pair, and $F:\mathbb R\to\mathbb R$ be a Lipschitz function. Set the quantity
    \[
    R_E(\varepsilon,I,F,(\omega_1,\omega_2))\hspace{-.5mm}:=\hspace{-.5mm}\frac{\mathcal{L}^1\hspace{-1mm}\left(\left\{x\in I:\exists\,y\in[-\varepsilon,\varepsilon]\ \text{s.t.}\ x\omega_1\hspace{-1mm}+\hspace{-1mm}(F(x)+y)\omega_2\in E\right\}\right)}{\mathcal{L}^1(I)}.
    \]

    Let $r,M\geqslant 0$.
    \begin{enumerate}[\rm(1)]
        \item The \textit{rectifiability constant} $\mathcal{R}_E(\varepsilon,r,M)$ is defined as
        \[\mathcal{R}_E(\varepsilon,r,M):=\sup_{\substack{\mathcal {L}^1(I)\geqslant r,\Lip(F)\leqslant M,\\ \omega_1\perp\omega_2}} R_E(\varepsilon,I,F,(\omega_1,\omega_2)).\]
        \item Let $e_1=(1,0),e_2=(0,1)\in S^1$. The \textit{restricted rectifiability constant} $\mathcal{R}_{E,\text{res}}(\varepsilon,r,M)$ is defined as
        \[\mathcal{R}_{E,\text{res}}(\varepsilon,r,M):=\sup_{\substack{\mathcal {L}^1(I)\geqslant r,\\\Lip(F)\leqslant M}} \max\{R_E(\varepsilon,I,F,(e_1,e_2)),R_E(\varepsilon,I,F,(e_2,e_1))\}.\]
    \end{enumerate}
\end{dfn}

\begin{ex}
    Observe that $0\leqslant \mathcal{R}_{E,\text{res}}(\varepsilon,r,M)\leqslant \mathcal{R}_E(\varepsilon,r,M)\leqslant 1$. Moreover, the second inequality can be strict. For example, if $E=\{(x,x):x\in [-1,1]\}$,
    \[\frac{1}{2}=\mathcal{R}_{E,\text{res}}(\varepsilon,2,1/2)<\mathcal{R}_E(\varepsilon,2,1/2)=1,\quad\text{for all}\;\;0<\varepsilon<\frac{1}{4}.\]

    In \cite[Definition~1.10]{Tao09}, the rectifiability constant is defined as $\mathcal{R}_E(\varepsilon,r,M)$. This choice is motivated by the quantitative Besicovitch projection theorem in \cite[Theorem~1.13]{Tao09}. In particular, \cite[Lemma~5.5]{Tao09} relies on the fact that a given Lipschitz graph can be parametrized in different coordinate directions. In the quantitative two-projection setting, we rely on Lemma \ref{lem:equiv def of unrect} and do not require rotations of Lipschitz graphs for the parameterizations.
\end{ex}

The rectifiability constant quantifies the extent to which a set exhibits rectifiable or unrectifiable behavior, as established in \cite[Proposition~1.11]{Tao09}.
\begin{prop}[Equivalence of Qualitative and Quantitative Unrectifiability, Proposition~1.11 in \cite{Tao09}]{}\label{prop:qual and quant rect equiv}
    Let $E\subseteq \mathbb R^2$ be a compact set. Then $E$ is purely $1$-unrectifiable if and only if $\lim\limits_{\varepsilon\to 0} \mathcal{R}_E(\varepsilon,r,M)=0$ for every $r,M>0$.
\end{prop}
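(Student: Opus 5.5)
The plan is to prove the two implications separately. The implication ``$\lim_{\varepsilon\to0}\mathcal R_E=0$ for all $r,M$ $\Rightarrow$ purely unrectifiable'' is immediate by contraposition. The converse uses a compactness argument (Arzel\`a--Ascoli plus a limsup-of-sets argument) and the closedness of $E$. First note that for fixed $I,F,(\omega_1,\omega_2)$ the quantity $R_E(\varepsilon,I,F,(\omega_1,\omega_2))$ is nondecreasing in $\varepsilon$. Hence $\mathcal R_E(\varepsilon,r,M)$ is nondecreasing in $\varepsilon$, and the limit as $\varepsilon\to0$ exists.

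\emph{($\Leftarrow$).} Suppose $E$ is not purely $1$-unrectifiable. Then there are a Lipschitz $F$ and an orthonormal pair $(\omega_1,\omega_2)$ with $\mathcal L^1(A)>0$, where $A=\{x:x\omega_1+F(x)\omega_2\in E\}$. Choose a bounded interval $I$ with $\mathcal L^1(A\cap I)>0$, and set $r=\mathcal L^1(I)$ and $M=\Lip(F)$. Taking $y=0$ in Definition \ref{defn: rect constant} gives
\[
R_E(\varepsilon,I,F,(\omega_1,\omega_2))\geqslant \mathcal L^1(A\cap I)/\mathcal L^1(I)>0
\]
for every $\varepsilon>0$. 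Therefore $\lim_{\varepsilon\to0}\mathcal R_E(\varepsilon,r,M)>0$.

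\emph{($\Rightarrow$).} Suppose $E$ is purely unrectifiable, but for some $r,M>0$ and $\delta>0$ we have $\mathcal R_E(\varepsilon,r,M)>\delta$ for all $\varepsilon>0$. Then we can pick $\varepsilon_n\to0$, intervals $I_n$ with $\mathcal L^1(I_n)\geqslant r$, $M$-Lipschitz $F_n$, and orthonormal pairs $(\omega_1^n,\omega_2^n)$ such that the sets
\[
S_n=\{x\in I_n:\exists\,y\in[-\varepsilon_n,\varepsilon_n],\ x\omega_1^n+(F_n(x)+y)\omega_2^n\in E\}
\]
satisfy $\mathcal L^1(S_n)>\delta\,\mathcal L^1(I_n)$. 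The key normalization step, which I expect to be the main technical point, runs as follows. Let $R_0=\sup_{z\in E}|z|+1$. If $x\in S_n$, then $x$ is the $\omega_1^n$-coordinate of a point of $E$, so $|x|\leqslant R_0$. Replace $I_n$ by $I_n'=I_n\cap[-R_0,R_0]$. Then $S_n\subseteq I_n'$, so $\mathcal L^1(S_n)>\delta\mathcal L^1(I_n)\geqslant\delta\,\mathcal L^1(I_n')$, and also $\mathcal L^1(I_n')\geqslant\mathcal L^1(S_n)>\delta r$. Since $S_n\ne\emptyset$, at some point $x\in I_n'$ we have $|F_n(x)|\leqslant R_0+1$. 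Because $F_n$ is $M$-Lipschitz and $\mathcal L^1(I_n')\leqslant 2R_0$, this gives the uniform bound $|F_n|\leqslant R_0+1+2MR_0$ on $I_n'$. After extending $F_n$ to $[-R_0,R_0]$ with the same Lipschitz constant, Arzel\`a--Ascoli and compactness of $S^1$ let us pass to a subsequence along which
\begin{itemize}
\item $F_n\to F$ uniformly on $[-R_0,R_0]$, with $\Lip(F)\leqslant M$;
\item $(\omega_1^n,\omega_2^n)\to(\omega_1,\omega_2)$, again an orthonormal pair;
\item the endpoints of $I_n'$ converge, so $I_n'\to I$ with $\mathcal L^1(I)\geqslant\delta r$.
\end{itemize}

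Now let $S=\limsup_n S_n$, the set of points lying in infinitely many $S_n$. All $S_n$ lie in the fixed bounded set $[-R_0,R_0]$, so the reverse Fatou lemma for indicator functions gives $\mathcal L^1(S)\geqslant\limsup_n\mathcal L^1(S_n)\geqslant\delta^2 r>0$. Take $x\in S$, and choose $y_n\in[-\varepsilon_n,\varepsilon_n]$ with $x\omega_1^n+(F_n(x)+y_n)\omega_2^n\in E$ for infinitely many $n$. These points converge to $x\omega_1+F(x)\omega_2$, which therefore lies in $E$ because $E$ is closed. So $\{x:x\omega_1+F(x)\omega_2\in E\}\supseteq S$ has positive measure. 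Extending $F$ to a Lipschitz function on all of $\mathbb R$, this contradicts Definition \ref{defn: rect}(2). Hence $\lim_{\varepsilon\to0}\mathcal R_E(\varepsilon,r,M)=0$ for every $r,M>0$.
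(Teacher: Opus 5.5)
Your proof is correct, and it is essentially the standard compactness argument (Arzel\`a--Ascoli, reverse Fatou for $\limsup_n S_n$, and closedness of $E$) behind Tao's Proposition~1.11, which the paper cites rather than reproves. Two small fixes: in the ($\Leftarrow$) direction take $M=\max\{\Lip(F),1\}$, so that $M>0$ even when $F$ is constant; and in the ($\Rightarrow$) direction take $S=\limsup_n S_n$ along the extracted convergent subsequence, not the original sequence.
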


\begin{prop}[Equivalence of Qualitative and Quantitative Unrectifiability]{}\label{prop:qual and quant rect equiv XY}
    Let $E\subseteq \mathbb R^2$ be a compact set. Then $E$ is purely $1$-unrectifiable if and only if $\lim\limits_{\varepsilon\to 0} \mathcal{R}_{E,\text{res}}(\varepsilon,r,M)=0$ for every $r,M>0$.
\end{prop}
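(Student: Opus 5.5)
We prove the two directions separately, comparing throughout with Proposition \ref{prop:qual and quant rect equiv} and Lemma \ref{lem:equiv def of unrect}.

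\emph{The direction ``purely unrectifiable $\Rightarrow$ limit zero''.} Since $0\leqslant \mathcal{R}_{E,\text{res}}(\varepsilon,r,M)\leqslant \mathcal{R}_E(\varepsilon,r,M)$, this is immediate from Proposition \ref{prop:qual and quant rect equiv}. Note that monotonicity in $\varepsilon$ guarantees the limit exists.

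\emph{The converse.} We argue by contraposition. Suppose $E$ is not purely $1$-unrectifiable. By Lemma \ref{lem:equiv def of unrect} there is a Lipschitz $F:\mathbb R\to\mathbb R$ such that, after possibly swapping coordinates (i.e.\ using $(e_2,e_1)$ instead of $(e_1,e_2)$), the set $A:=\{x:(x,F(x))\in E\}$ has positive Lebesgue measure. Set $M:=\Lip(F)$.

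\begin{enumerate}[\rm(1)]
\item Since $E$ is compact and $F$ is continuous, $A$ is closed. By Lebesgue's density theorem we pick a density point $x_0$ of $A$ and a compact interval $I\ni x_0$ with $\mathcal{L}^1(A\cap I)\geqslant \tfrac12\mathcal{L}^1(I)$. Put $r:=\mathcal{L}^1(I)>0$.
\item For every $\varepsilon>0$, each $x\in A\cap I$ lies in the set defining $R_E(\varepsilon,I,F,(e_1,e_2))$, witnessed by $y=0$.
\item Hence $\mathcal{R}_{E,\text{res}}(\varepsilon,r,M)\geqslant \tfrac12$ for all $\varepsilon>0$, so the limit is not zero.
\end{enumerate}

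This shows that the vanishing of the restricted constant for all $r,M>0$ forces pure unrectifiability.

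\emph{Main obstacle.} The only nontrivial input is Lemma \ref{lem:equiv def of unrect}, which reduces a general Lipschitz graph to one parametrized along a coordinate axis. Its justification is the sketched argument via the implicit function theorem, Whitney extension and Lusin's theorem. If I needed to make it self-contained, I would proceed as follows.

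\begin{enumerate}[\rm(1)]
\item Take a rotated graph $\zeta=\{t\omega_1+G(t)\omega_2\}$ with $\mathcal{H}^1(E\cap\zeta)>0$.
\item Parametrize $\zeta$ by $\phi(t)=t\omega_1+G(t)\omega_2$, and use Rademacher to get $\phi'(t)\ne0$ almost everywhere.
\item Restrict to a positive-measure compact set on which $\phi'$ is continuous and nearly constant (Lusin), hence close to some direction not perpendicular to $e_1$ or to $e_2$. On this set, the projection onto that coordinate axis is bi-Lipschitz, so the piece is a graph over that axis with a Lipschitz function.
\item Extend this function to all of $\mathbb R$ by McShane extension.
\end{enumerate}

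This yields an axis-parametrized Lipschitz graph meeting $E$ in positive $\mathcal{H}^1$-measure, which is what the converse direction requires.
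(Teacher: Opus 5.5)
Your proof is correct and follows the paper's argument: the forward direction comes from $\mathcal{R}_{E,\text{res}}\leqslant\mathcal{R}_E$ and Proposition~\ref{prop:qual and quant rect equiv}, and the converse from Lemma~\ref{lem:equiv def of unrect} with the witness $y=0$; the paper takes $I=[-R,R]$ with $E\subseteq B(0,R)$ and gets the $\varepsilon$-independent bound $(2R)^{-1}\mathcal{L}^1(A\cap[-R,R])>0$, so your density-point step is harmless but unnecessary. One caveat on your optional sketch of the lemma: Lusin continuity of $\phi'$ on a compact $K$ does not control the chords $\phi(s)-\phi(t)=\int_t^s\phi'$ for $s,t\in K$, because the integral also runs over $[t,s]\setminus K$, so you would additionally need uniform differentiability on $K$ (via Egorov) and localization to a short interval, which is what the Whitney extension step in the paper's sketch supplies.
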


\begin{proof}
    First suppose $E$ is purely $1$-unrectifiable. Since $\mathcal{R}_{E,\text{res}}(\varepsilon,r,M)\leqslant \mathcal{R}_E(\varepsilon,r,M)$, the conclusion immediately follows from Proposition \ref{prop:qual and quant rect equiv}.

    Next suppose $\lim\limits_{\varepsilon\to 0} \mathcal{R}_{E,\text{res}}(\varepsilon,r,M)=0$ for every $r,M>0$. Assume $E$ is not purely $1$-unrectifiable. By Lemma \ref{lem:equiv def of unrect}, there exists a Lipschitz function $F:\mathbb R\to\mathbb R$ with $\Lip(F)=M$ such that either $\mathcal{L}^1(\{x\in\mathbb R:(x,F(x))\in E\})>0$ or $\mathcal{L}^1(\{x\in\mathbb R:(F(x),x)\in E\})>0$. We may assume $\mathcal{L}^1(\{x\in\mathbb R:(x,F(x))\in E\})>0$. Since $E$ is compact, there exists $R>0$ such that $E\subseteq B(0,R)$. Thus, $\mathcal{L}^1 (\{x\in[-R,R]:(x,F(x))\in E\})>0$, so
    \[\mathcal{R}_{E,\text{res}}(\varepsilon,2R,M)\geqslant (2R)^{-1}\mathcal{L}^1 (\{x\in[-R,R]:(x,F(x))\in E\})>0,\quad\text{for all}\;\;\varepsilon>0,\]
    which leads to a contradiction.
\end{proof}

\subsection{Quantitative two-projection theorem}\label{section:main quant result}
We first make some basic required assumptions on the curve $\Gamma$.

\begin{dfn}[Simple Curvature Condition]\label{dfn:simple curvature condition}
    Let $\Gamma\subseteq \mathbb R^2$ be a curve. We say that $\Gamma$ satisfies the \textit{simple curvature condition} if $\Gamma=\{(t,\gamma(t)):t\in I\}$, where $I$ is a compact interval, $\gamma:\mathbb R\to\mathbb R$ is $C^1$, $\sup\limits_{t\in \mathbb R} |\gamma'(t)|\leqslant 1$, and $\gamma'$ is $M$-Lipschitz on $I$, that is,
    \[M^{-1}|s-t|\leqslant|\gamma'(s)-\gamma'(t)|\leqslant M|s-t|,\quad\text{for all}\ s,t\in I.\]
\end{dfn}

Let $\Gamma=\{(t,\gamma(t)):t\in I\}$ satisfy this condition. Then $\gamma'$ is injective, hence monotone, and $\gamma''$ exists almost everywhere. Without loss of generality, we assume that $\gamma'$ is strictly decreasing on $I$, so that $\Gamma$ is concave down.

With this setup, we study the one-parameter family of projections $\{\Phi_\alpha\}_{\alpha\in\Lambda}$ on the unit cube $[0,1]^2$.
\begin{dfn}[Curve Projections on the Unit Cube]{}\label{dfn:curve proj on cube}
    Let $\Gamma=\{(t,\gamma(t)):t\in I\}$ be a curve satisfying the simple curvature condition. Suppose $\gamma'$ is strictly decreasing and $\Gamma$ is concave down. Let $\Lambda\subseteq\mathbb R$ be a compact set with $\Lambda-[0,1]\subseteq I$.
    \textit{The one-parameter family of curve projections} $\{\Phi_\alpha\}_{\alpha\in\Lambda}$ is defined by
    \[\Phi_\alpha:[0,1]^2\to \ell_\alpha,\quad\Phi_\alpha(\mathbf{a}):=a_2+\gamma(\alpha-a_1).\]
\end{dfn}

We now state our second main result, which can be compared to \cite[Theorem~1.17]{Tao09}.  Recall that the restricted rectifiability constant and accompanying notations are defined in Definition \ref{defn: rect constant}.

\begin{thm}[Quantitative Two-Projection Theorem for Curve Projections]\label{thm:quant curve two}
    Let $E\subseteq [0,1]^2$ be a compact set, and $\Gamma=\{(t,\gamma(t)):t\in \mathbb R\}$ be a curve satisfying the simple curvature condition. We consider the curve projections $\{\Phi_\alpha\}_{\alpha\in\Lambda}$ on $[0,1]^2$ as defined in Definition \ref{dfn:curve proj on cube}.

    Let $\alpha,\alpha'\in \Lambda$ with $\alpha\ne\alpha'$. 
    Suppose that there exists a sequence of scales
    \[0<r_N<r_{N-1}<\cdots<r_1\leqslant 1\]
    with the following properties:
    \begin{enumerate}[\rm(1)]
        \item {\rm(Scale Separation)} Each $r_n$ is a dyadic number, that is, $r_n=2^{-j_n}$ for some $j_n\in\mathbb N$. In particular,
        \[r_{n+1}\leqslant \frac{1}{2}r_n,\quad\text{for all}\;\;1\leqslant n<N.\]
        \item {\rm(Small Projections)} Let $\mathcal{N}_r(G)=\{x\in\mathbb R^2:d(x,G)<r\}$ be the open $r$-neighborhood of a set $G\subseteq\mathbb R$. Then
        \[\mathcal{L}^1(\mathcal{N}_{r_{n+1}}(\Phi_\alpha(E)))\leqslant r_n\ \text{and}\ \mathcal{L}^1(\mathcal{N}_{r_{n+1}}(\Phi_{\alpha'}(E)))\leqslant r_n,\quad\text{for all}\;\;1\leqslant n<N.\]
    \end{enumerate}
    Then
    \[\mathcal{R}_{E,\text{res}}\left(r_N,1,N^{1/100}\right)\lesssim N^{-1/100}.\]
\end{thm}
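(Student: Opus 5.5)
We argue by contradiction, following Tao's proof of \cite[Theorem~1.17]{Tao09}. Suppose there is a horizontal graph (the vertical case is identical after swapping roles of coordinates) $\zeta=\{(x,F(x)):x\in I\}$ with $\mathcal{L}^1(I)\geqslant 1$ and $\Lip(F)\leqslant M:=N^{1/100}$ such that the set
\[A:=\{x\in I:\exists\, y\in[-r_N,r_N],\ (x,F(x)+y)\in E\}\]
satisfies $\mathcal{L}^1(A)\geqslant \delta\,\mathcal{L}^1(I)$ with $\delta=CN^{-1/100}$. Since $E\subseteq[0,1]^2$, we may restrict to $I\subseteq[0,1]$ up to constants. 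We compose the graph with the two projections. Set $g(x):=F(x)+\gamma(\alpha-x)$ and $g'(x)$-analogue $\tilde g(x):=F(x)+\gamma(\alpha'-x)$, so that $\Phi_\alpha(x,F(x)+y)=g(x)+y$. The transversality $|\nabla\Phi_\alpha\wedge\nabla\Phi_{\alpha'}|=|\gamma'(\alpha-x)-\gamma'(\alpha'-x)|\geqslant M_\Gamma^{-1}|\alpha-\alpha'|=:c_0>0$ (simple curvature condition) gives $|(g-\tilde g)'|\geqslant c_0$ a.e., so at every point and scale at least one of $g,\tilde g$ has average slope of size $\gtrsim c_0$ over a positive proportion of subintervals. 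Since $A$ maps into the $r_N$-neighbourhoods of $\Phi_\alpha(E)$ and $\Phi_{\alpha'}(E)$, hypothesis (2) says the images $g(A),\tilde g(A)$ are small at every scale $r_n$, at resolution $r_{n+1}$.

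The multiscale step: for each $n$, partition $I$ into dyadic intervals $J$ of length $r_n$ (scale separation lets us use the dyadic grid). Call $J$ \emph{expanding at scale $n$} if $A\cap J$ has density $\gtrsim\delta$ in $J$ and one of $g,\tilde g$ changes by $\gtrsim c_0 r_n$ across the dense part of $J$ (well-defined up to $M$-Lipschitz errors). On such a $J$, a set of density $\gtrsim\delta$ mapped by a map with slope $\gtrsim c_0$ on a large portion should have image of $r_{n+1}$-neighbourhood measure $\gtrsim \delta c_0 r_n/M$ — to make this rigorous we follow Tao and use a ``stopping/good-scale'' argument: the Lipschitz function $F$ can be non-affine (``oscillating'') at only $O(M^2/\epsilon^2)$ scales on most of $I$, by a Carleson/Dorronsoro-type square function estimate for $F'$ (the $L^2$ bound $\sum_n \|F'-\text{avg}_{r_n}F'\|^2\lesssim M^2$). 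At the remaining scales $F$ is $\epsilon$-close to affine on most $J$, so $g$ and $\tilde g$ are close to affine with slopes differing by $\geqslant c_0$, hence one has slope $\geqslant c_0/2$, and the images of distinct $J$'s at that scale can overlap only boundedly (the affine approximations are injective at scale $r_n$ modulo the $r_{n+1}$ blurring).

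Summing over the $\gtrsim \delta r_n^{-1}$ expanding intervals at a good scale $n$ would give $\mathcal{L}^1(\mathcal{N}_{r_{n+1}}(\Phi(E)))\gtrsim \delta c_0/M$, which is too weak alone versus the bound $r_n$; the actual gain must come from accumulating across scales: as in Tao, one shows that at each good scale the density of $A$ (measured in the image, i.e. of $\Phi_\alpha(E)$ or $\Phi_{\alpha'}(E)$ within image intervals of length $r_n$) must drop by a factor, or equivalently the quantity $\sum_n \mathcal{L}^1(\mathcal N_{r_{n+1}}\Phi(E))/r_n$-weighted energy increases by $\gtrsim\delta^{O(1)}M^{-O(1)}$ per good scale while being bounded by $O(1)$ overall. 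With $N - O(M^2\epsilon^{-2})$ good scales, this forces $N\delta^{O(1)}M^{-O(1)}\lesssim1$, i.e. $\delta\lesssim N^{-1/100}$ once $M=N^{1/100}$ and exponents are tracked. The main obstacle is this incremental step: designing the right monotone potential (likely a pigeonholed density of $A$ along the two projected grids at scale $r_n$ vs $r_{n+1}$) that gains a definite amount at each good scale using transversality; I would first try to copy Tao's argument for the linear case, replacing the two fixed directions by the two linearizations $\nabla\Phi_\alpha,\nabla\Phi_{\alpha'}$, whose variation across an interval of length $r_n$ is $O(M_\Gamma r_n)$ and hence negligible.
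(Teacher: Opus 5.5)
Your setup, the transversality bound $|\gamma'(\alpha-x)-\gamma'(\alpha'-x)|\gtrsim|\alpha-\alpha'|$, and the $L^2$ square-function bound used to find scales where $F$ is nearly affine all agree with the paper. The paper gets that bound from the orthogonality of the increments $F_{n+1}'-F_n'$ of the dyadic piecewise-linear interpolants, and does the same for $\gamma_\lambda(x)=\gamma(\lambda-x)$. The gap is the step that actually produces the contradiction: you defer it to a monotone potential that you never design, and the reasoning that led you there is mistaken.

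A bound $\mathcal{L}^1(\mathcal{N}_{r_{n+1}}(\Phi_\alpha(E)))\gtrsim\delta c_0/M=N^{-O(1/100)}$ would not be too weak. Hypothesis (2) gives $\leqslant r_n\leqslant 2^{1-n}$, so such a bound would already be a contradiction for every $n\gtrsim\log N$. The real problem is that this lower bound is not justified.
\begin{itemize}
\item There is no bounded-overlap property across all of $I$: $F$ may have unrelated affine approximations on different coarse intervals, and their images under $x\mapsto F(x)+\gamma(\alpha-x)$ can coincide.
\item Even on one nearly affine interval, the few fine subintervals where $F'$ deviates can fold the composed map back, giving some target intervals large multiplicity.
\item Your per-interval estimate for a density-$\delta$ subset under a map that is only $\epsilon$-close to affine is also unproved.
\end{itemize}

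The paper, following Tao, instead gets a contradiction at a single scale after localizing.

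\emph{Pigeonholing.} Pigeonholing over the nested dyadic covers $A_N\subseteq\cdots\subseteq A_1$ of $A$ gives $n_0$ with $\mathcal{L}^1(A_{n_0^-}\setminus A_{n_0^+})\lesssim N^{-3/100}$. Pigeonholing the increasing quantities $\|F_n'\|_2^2$ and $\|\gamma_{\lambda,n}'\|_2^2$ (total $\lesssim N^{2/100}$) gives $n_1$ near $n_0$ with $\|F_{n_1^+}'-F_{n_1^-}'\|_2^2+\cdots\lesssim N^{-4/100}$.

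\emph{Localization.} Under the contradiction hypothesis $\mathcal{L}^1(A_{n_0^-})\gtrsim N^{-1/100}$, so one can fix a single dyadic $I_0=[a,b]\subseteq A_{n_0^-}$ of length $r_{n_1^-}$, split into $S$ subintervals $J_j$ of length $r_{n_1^+}$. All but $O(N^{-2/100}S)$ of the $J_j$ meet $A$; this ``meets $A$'' property, not a density $\delta$, is what is needed. All but $O(N^{-3/100}S)$ are good: the composed map has slope of the sign of $t$ and size $\sim|t|$ on $J_j$. Here $t=c-\gamma'(\lambda-a)$, $c$ is the slope of $F_{n_1^-}$ on $I_0$, and $\lambda\in\{\alpha,\alpha'\}$ is chosen by transversality so that $|t|\gtrsim 1$.

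\emph{Counting.} The Rising Sun lemma shows that each target interval $K_m$ of length $100N^{1/100}r_{n_1^+}$ is reached by $O(N^{1/100})$ of the $J_j$, unless a $\gtrsim N^{-1/100}$ proportion of those are bad. Hence all but $O(N^{-2/100}S)$ of the $J_j$ are typical. Choosing one point of $A$ in each typical $J_j$ that meets $A$ yields $\gtrsim N^{-1/100}S$ distinct $K_m$. Each lies in $\mathcal{N}_{r_{n_1}}(\Phi_\lambda(E))$ because $N^{1/100}r_{n_1^+}\ll r_{n_1}$. Therefore $\mathcal{L}^1(\mathcal{N}_{r_{n_1}}(\Phi_\lambda(E)))\gtrsim r_{n_1^-}\geqslant 2^{N^{91/100}-1}r_{n_1-1}$, which violates (2) at the single index $n_1-1$.

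The multiscale structure enters only through the two pigeonholing steps, which is where the exponent $1/100$ comes from. No accumulation of the projection hypothesis across scales is needed, and your proposal as written does not supply an argument at this point.
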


\begin{rmk}
    It is not hard to adapt the proof of Theorem \ref{thm:quant curve two} to derive a quantitative version of the two-projection theorem for a more general family of projections $\{P_\lambda\}_{\lambda\in \Lambda}$ under some additional regularity conditions. For example, one may assume for each $\lambda\in \Lambda$, $P_\lambda$ is $C^2$ with nonvanishing curvature. The main idea is to quantify the growth and decay of functions $x\mapsto P_\lambda(x,F(x))$ and $x\mapsto P_\lambda(F(x),x)$, where $F:\mathbb R\to\mathbb R$ is Lipschitz. We omit the details since the proof is largely the same.
\end{rmk}

\begin{rmk}
    If the family $\{P_\lambda\}_{\lambda\in \Lambda}$ is \textit{``rotational invariant''}, in the sense that $\Lambda\subseteq\mathbb R^2$ is rotational invariant, and
    \[\mathcal{L}^1(P_{\lambda}(E))=\mathcal{L}^1\left(P_{O(\lambda)}(O(E))\right),\quad\text{for all}\ \lambda\in \Lambda,O\in \operatorname{SO}(2),\]
    then one can prove a stronger estimate
    \[\mathcal{R}_E(r_N,1,N^{1/100})\lesssim N^{-1/100}.\]
    This applies, for example, to orthogonal projections, pinned distance sets, and radial projections.
\end{rmk}

It is straightforward to deduce our qualitative two-projection theorem - Corollary \ref{cor:curve} - from Theorem \ref{thm:quant curve two}.  While the following statement is identical to that in Corollary \ref{cor:curve}, we include it here with an alternative proof.

\begin{cor}[Two-Projection Theorem for Curve Projections]\label{altCor}
    Let $E\subseteq [0,1]^2$ be a compact set, $\Gamma=\{(t,\gamma(t)):t\in \mathbb R\}$ be a curve satisfying the simple curvature condition, and $\Lambda\subseteq\mathbb R$ be a compact set. If there 
 exist $\alpha,\alpha'\in \Lambda$ such that $\alpha\ne\alpha'$ and $\mathcal{L}^1(\Phi_\alpha(E))=\mathcal{L}^1(\Phi_{\alpha'}(E))=0$, then $E$ is purely $1$-unrectifiable.
\end{cor}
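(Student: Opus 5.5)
The plan is to deduce Corollary \ref{altCor} from Theorem \ref{thm:quant curve two} combined with Proposition \ref{prop:qual and quant rect equiv XY}. By Proposition \ref{prop:qual and quant rect equiv XY}, it suffices to show that $\lim_{\varepsilon\to 0}\mathcal{R}_{E,\text{res}}(\varepsilon,r,M)=0$ for every $r,M>0$. Two monotonicity facts let us reduce to the parameters appearing in Theorem \ref{thm:quant curve two}. First, $\mathcal{R}_{E,\text{res}}$ is nondecreasing in $\varepsilon$ and in $M$. Second, since $E\subseteq[0,1]^2$, an interval $I$ with $\mathcal{L}^1(I)\geqslant r$ can be compared with an interval of length at least $1$. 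Concretely, if $r<1$, we either enlarge $I$ to length $1$, losing at most a factor $r^{-1}$ because the set counted in $R_E$ lies in $[0,1]$, or we use directly that $\mathcal{R}_{E,\text{res}}(\varepsilon,r,M)\leqslant r^{-1}\mathcal{R}_{E,\text{res}}(\varepsilon,1,M)$. Here we need to check that $F$ can be extended or modified off $[0,1]$ without changing $\Lip(F)$, which is harmless. So it is enough to show $\mathcal{R}_{E,\text{res}}(\varepsilon,1,M)\to 0$ for each fixed $M$.

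Next, we construct the scales for a given $N$. Since $E$ is compact, $\Phi_\alpha(E)$ is compact, and $\mathcal{L}^1(\mathcal{N}_\delta(\Phi_\alpha(E)))\downarrow\mathcal{L}^1(\Phi_\alpha(E))=0$ as $\delta\to 0$; the same holds for $\alpha'$. We choose the scales inductively. Set $r_1=1/2$. Given a dyadic $r_n$, choose a dyadic $r_{n+1}\leqslant r_n/2$ small enough that both $\mathcal{L}^1(\mathcal{N}_{r_{n+1}}(\Phi_\alpha(E)))\leqslant r_n$ and $\mathcal{L}^1(\mathcal{N}_{r_{n+1}}(\Phi_{\alpha'}(E)))\leqslant r_n$. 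This produces scales satisfying hypotheses (1) and (2) of Theorem \ref{thm:quant curve two} for every $N$, so
\[\mathcal{R}_{E,\text{res}}(r_N,1,N^{1/100})\lesssim N^{-1/100}.\]

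Finally, fix $M$ and $\eta>0$, and pick $N$ with $N^{1/100}\geqslant M$ and $CN^{-1/100}<\eta$. For every $\varepsilon\leqslant r_N$, monotonicity gives
\[\mathcal{R}_{E,\text{res}}(\varepsilon,1,M)\leqslant\mathcal{R}_{E,\text{res}}(r_N,1,N^{1/100})<\eta.\]
Hence the limit is $0$, and Proposition \ref{prop:qual and quant rect equiv XY} yields pure $1$-unrectifiability. The only mildly delicate step is the reduction from a general $r$ to $r=1$, together with confirming that the implicit constant in Theorem \ref{thm:quant curve two} does not depend on the chosen scales. The latter is implicit in the theorem's statement, where the constant depends only on $\Gamma$ and $\alpha,\alpha'$.
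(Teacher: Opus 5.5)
Your proposal is correct and follows the paper's proof. Both arguments choose dyadic scales recursively using $\mathcal{L}^1(\mathcal{N}_\delta(\Phi_\alpha(E)))\to 0$ (and likewise for $\alpha'$), apply Theorem \ref{thm:quant curve two}, use that $\mathcal{R}_{E,\text{res}}$ is nondecreasing in $\varepsilon$ and $M$, and conclude with Proposition \ref{prop:qual and quant rect equiv XY}. Your explicit reduction from general $r$ to $r=1$ (enlarging $I$ to unit length at the cost of a factor $r^{-1}$) makes precise what the paper only calls ``appropriate rescaling.''
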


\begin{proof}
    Fix $M>0$. Note that
    \[\lim\limits_{r\to 0} \mathcal{L}^1(\mathcal{N}_r(\Phi_\alpha(E)))=\lim\limits_{r\to 0} \mathcal{L}^1(\mathcal{N}_r(\Phi_{\alpha'}(E)))=0.\]
    Set $r_1=1/2$. For every $n\geqslant  2$, recursively define
    \[r_n:=\min\left\{\frac{1}{2}r_{n-1},\max\left\{2^{-j}:j\in\mathbb N,\mathcal{L}^1(\mathcal{N}_{2^{-j}}(\Phi_\alpha(E)))\leqslant r_{n-1},\mathcal{L}^1(\mathcal{N}_{2^{-j}}(\Phi_{\alpha'}(E)))\leqslant r_{n-1}\right\}\right\}.\]
    By Theorem \ref{thm:quant curve two}
    \[\mathcal{R}_{E,\text{res}}(r_N,1,M)\leqslant \mathcal{R}_{E,\text{res}}\left(r_N,1,N^{1/100}\right)\lesssim N^{-1/100}\]
    for sufficiently large $N$. Taking $N\to\infty$ gives
    \[\lim\limits_{\varepsilon\to 0} \mathcal{R}_{E,\text{res}}(\varepsilon,1,M)=\lim\limits_{N\to \infty} \mathcal{R}_{E,\text{res}}(r_N,1,M)=0.\]
    By Proposition \ref{prop:qual and quant rect equiv XY} and appropriate rescaling, $E$ is purely $1$-unrectifiable.
\end{proof}

Combined with Theorem \ref{thm:quali besi proj thm curve} later, we obtain the following characterization of purely $1$-unrectifiable sets via curve projection.
\begin{thm}[Relation between Unrectifiability and Curve Projections]
    Let $E\subseteq [0,1]^2$ be a compact set with $\mathcal{H}^1(E)<\infty$, and $\Gamma\subseteq\mathbb R^2$ be a curve satisfying the simple curvature condition. Then $E$ is purely $1$-unrectifiable if and only if $\Fav_\Gamma(E)=0$.
\end{thm}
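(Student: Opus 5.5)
The plan is to prove the two implications separately. The forward implication will come from the nonlinear Besicovitch-type result, Theorem \ref{thm:quali besi proj thm curve}, stated later in the paper. The reverse implication will come from the two-projection theorem for curve projections, Corollary \ref{altCor} (equivalently Corollary \ref{cor:curve}). I take $\Fav_\Gamma(E)$ to mean the averaged curve-projection length
\[
\Fav_\Gamma(E)=\int_\Lambda \mathcal{L}^1(\Phi_\alpha(E))\,d\alpha ,
\]
where the parameter set $\Lambda$ (with $\Lambda-[0,1]\subseteq I$) has positive Lebesgue measure, for instance a compact interval as in Definition \ref{dfn:curve proj on cube}. If the later definition uses a different normalization or weight, only the measurability and positivity bookkeeping below changes.

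\emph{Forward direction.} Suppose $E$ is purely $1$-unrectifiable with $\mathcal{H}^1(E)<\infty$. This is precisely the setting of the nonlinear Besicovitch projection theorem for curve projections, Theorem \ref{thm:quali besi proj thm curve}, which I expect to be the qualitative counterpart of the quantitative result of Davey and the second author \cite{DT21}. That theorem gives $\mathcal{L}^1(\Phi_\alpha(E))=0$ for $\mathcal{L}^1$-a.e.\ $\alpha\in\Lambda$, and integrating over $\Lambda$ yields $\Fav_\Gamma(E)=0$. The simple curvature condition, meaning $\gamma'$ is bi-Lipschitz with $|\gamma'|\le 1$, is exactly the transversality hypothesis such theorems require. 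So this step should be a direct citation, provided Theorem \ref{thm:quali besi proj thm curve} is stated for this class of curves; otherwise I will check that its hypotheses follow from Definition \ref{dfn:simple curvature condition}.

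\emph{Reverse direction.} Suppose $\Fav_\Gamma(E)=0$. First I check that $\alpha\mapsto\mathcal{L}^1(\Phi_\alpha(E))$ is measurable. Since $E$ is compact and $(\mathbf a,\alpha)\mapsto a_2+\gamma(\alpha-a_1)$ is continuous, each $\Phi_\alpha(E)$ is compact. The map $\alpha\mapsto \Phi_\alpha(E)$ is continuous in the Hausdorff metric, so $\alpha\mapsto\mathcal{L}^1(\Phi_\alpha(E))$ is upper semicontinuous and hence Borel. A nonnegative function with zero integral vanishes a.e., so $\mathcal{L}^1(\Phi_\alpha(E))=0$ for all $\alpha$ in a full-measure subset $\Lambda_0\subseteq\Lambda$. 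Because $\mathcal{L}^1(\Lambda)>0$, the set $\Lambda_0$ contains two distinct points $\alpha\neq\alpha'$. Corollary \ref{altCor} then applies directly: $\gamma$ is $C^1$ with $\gamma'$ injective, since $\gamma'$ is bi-Lipschitz on $I$, and $\alpha,\alpha'\in\Lambda$. Hence $E$ is purely $1$-unrectifiable. This direction does not use $\mathcal{H}^1(E)<\infty$.

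\emph{Main obstacle.} The only potential difficulty is in the forward direction, namely matching the exact hypotheses and normalization of Theorem \ref{thm:quali besi proj thm curve} and of $\Fav_\Gamma$ to the present setting. In particular I must check that the curve projections are the same maps $\Phi_\alpha$ on $[0,1]^2$, over the same parameter set $\Lambda$. Once the definitions are aligned, both directions are short reductions to results already established or cited.
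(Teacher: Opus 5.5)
Your proposal is correct and follows the paper's own route: the forward direction is exactly Theorem \ref{thm:quali besi proj thm curve}, and the reverse direction picks two distinct parameters from the full-measure set where $\mathcal{L}^1(\Phi_\alpha(E))=0$ and applies the two-projection theorem for curve projections (Corollary \ref{altCor}). One small adjustment: the paper defines $\Fav_\Gamma(E)$ as an integral over all $\alpha\in\mathbb R$ (it equals $\mathcal{L}^2(E+\Gamma)$), so restricting to $\Lambda$ is harmless, but it tacitly requires a positive-measure $\Lambda$ with $\Lambda-[0,1]\subseteq I$, i.e.\ $\mathcal{L}^1(I)>1$, an assumption the paper also leaves implicit; if you want to avoid it, do the following: take a Lipschitz graph $\zeta$ with $\mathcal{H}^1(E\cap\zeta)>0$ and a density point $p$ of $E\cap\zeta$, restrict $E$ to a thin vertical strip around $p$, choose $\alpha\neq\alpha'$ with vanishing projections and $\alpha-p_1,\alpha'-p_1$ in the interior of $I$, and apply Theorem \ref{thm:two proj for general proj} to that piece.
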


\section{Short survey: curve projections and Favard curve length}\label{sec:survey}

One motivation for obtaining a quantitative two-projection theorem is that it provides a way to verify the hypotheses appearing in the quantitative Besicovitch projection theorem. Roughly speaking, the former translates the smallness of a few projections into multiscale geometric information about the set, while the latter converts this geometric information into decay estimates for Favard-type quantities. Thus, the two results naturally complement one
another.

%
The purpose of this section is to put the previous section into greater context by giving an introduction to the Favard problem and its nonlinear analogues. For a more in-depth survey on these topics by the second author, see \cite{Taylor24survey}.

A primary application of the classic two-projection theorem is to verify that a given set $E\subset \mathbb{R}^2$ is purely $1$-unrectifiable, which in turn allows one to apply the Besicovitch projection theorem and conclude that $\Fav(E)=0$.

Likewise, a primary application of Tao's quantitative two-projection theorem is to verify the hypotheses of his corresponding quantitative Besicovitch theorem, which, in turn, yields quantitative bounds on Favard length (see \cite{Tao09}).

It follows from the Besicovitch projection theorem, Theorem \ref{thm:Bes}, that, if $E\subseteq\mathbb R^2$ is a purely $1$-unrectifiable compact set with $0<\mathcal{H}^1(E)<\infty$, then
\[\Fav(E)=\lim_{r\to 0}\Fav(N_r(E))=0,\]
where $N_r(E)=\{x\in\mathbb R^2:d(x,E)\leqslant r\}$ is the closed $r$-neighborhood of $E$.

It is natural to ask how fast $\Fav(N_r(E))$ decays as $r\to 0$. In recent years, there have been substantial efforts on establishing asymptotic bounds for the decay of Favard length. This
is known as the \textit{Favard length problem}.

Several works \cite{BaV10,BLV14, Marshall25, NPV10} focused on special self-similar sets, including the classic \textit{four-corner Cantor set}.
Beyond self-similar sets, estimates for more general sets were provided in \cite{Mat90,Tao09}. In particular, Tao \cite{Tao09} analyzed the proof of the Besicovitch projection theorem and quantified its main steps via multilinear analysis, to obtain upper bounds on the Favard length. Tao also gave a quantitative version of two-projection theorem to check the hypotheses of the quantitative Besicovitch projection theorem.

It is worth mentioning that the Besicovitch projection theorem can be stated as follows. If $E\subseteq\mathbb{R}^2$ is a Borel set with $0<\mathcal{H}^1(E)<\infty$ and $\Fav(E)>0$, there exists a Lipschitz graph $\zeta\subseteq\mathbb{R}^2$ such that $\mathcal{H}^1\left(E\cap\zeta\right)>0$. In recent years, several works \cite{CDOV24,Dabrowski24,Dabrowski25,MO18,Orponen21} have aimed to quantify this statement. These results study how large $\Fav(E)$ must be to guarantee that $E$ contains a big piece of Lipschitz graph, that is, there exists a Lipschitz graph $\zeta\subseteq\mathbb R^2$ such that $\mathcal{H}^1\left(E\cap \zeta\right)\gtrsim \mathcal{H}^1(E)$. This question is closely related to analytic capacity, in particular to \textit{Vitushkin's Conjecture}. We refer to \cite{CT20} for an introduction to this topic; see also \cite{Dabrowski24} for recent progress.

\subsection{Favard curve length}
It is natural to formulate the Favard problem in a nonlinear setting.  For instance, we can introduce a nonlinear variant of Favard length.

\begin{dfn}[Favard Curve Length]
    Let $E\subseteq\mathbb R^2$ and $\Gamma\subseteq\mathbb R^2$ be a curve with a family of curve projections $\{\Phi_\alpha\}_{\alpha}$ defined as in (\ref{eqn:curve proj}). The \textit{Favard curve length} of $E$ is defined as
    \[\Fav_\Gamma(E):=\mathcal{L}^2\left(\left\{(\alpha,\beta)\in\mathbb R^2:\Phi_\alpha^{-1}(\beta)\cap E\ne\varnothing\right\}\right)=\int_{\mathbb R} \mathcal{L}^1(\Phi_\alpha(E))\,d\mathcal{L}^1(\alpha).\]
\end{dfn}

We can also state a nonlinear variant of the Besicovitch projection theorem.

\begin{thm}[Besicovitch Projection Theorem for Curve Projections, \cite{ST17}]\label{thm:quali besi proj thm curve}
    Let $E\subseteq [0,1]^2$ be a compact set with $\mathcal{H}^1(E)<\infty$, and let $\Gamma\subseteq\mathbb R^2$ be a curve satisfying the simple curvature condition. If $E$ is purely $1$-unrectifiable, then $\Fav_\Gamma(E)=0$.
\end{thm}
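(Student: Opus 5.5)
The plan is to deduce the result from the general Besicovitch projection theorem for \emph{transversal families of projections} due to Hovila, J\"arvenp\"a\"a, J\"arvenp\"a\"a and Ledrappier. That theorem states the following. Let $\{P_\lambda\}_{\lambda\in\Lambda}$ be a family of maps $\mathbb R^2\to\mathbb R$, locally Lipschitz in $x$ and $C^1$ (with a little extra regularity) in the parameter $\lambda$. Suppose the family is transversal, in the sense that
\[
\frac{|P_\lambda(a)-P_\lambda(b)|}{|a-b|}\leqslant \delta \quad\Longrightarrow\quad \left|\partial_\lambda\frac{P_\lambda(a)-P_\lambda(b)}{|a-b|}\right|\geqslant \delta .
\]
Then every purely $1$-unrectifiable set $E$ with $\mathcal H^1(E)<\infty$ satisfies $\mathcal L^1(P_\lambda(E))=0$ for a.e.\ $\lambda$. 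Since
\[
\Fav_\Gamma(E)=\int_{\mathbb R}\mathcal L^1(\Phi_\alpha(E))\,d\alpha ,
\]
it suffices to show that the family $\{\Phi_\alpha\}_{\alpha\in\Lambda}$ of Definition \ref{dfn:curve proj on cube} is transversal on $[0,1]^2$, for every compact parameter set $\Lambda$ with $\Lambda-[0,1]\subseteq I$. Outside such parameters $\Phi_\alpha(E)$ is handled by exhausting $\mathbb R$ with countably many such $\Lambda$ (after translating $E$ if needed).

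\textbf{Regularity.} I first check the regularity hypotheses. We have $\Phi_\alpha(\mathbf a)=a_2+\gamma(\alpha-a_1)$. This is $C^1$ in $(\mathbf a,\alpha)$ with $|\nabla_{\mathbf a}\Phi_\alpha|\leqslant \sqrt2$, because $|\gamma'|\leqslant1$. Moreover, $\partial_\alpha\Phi_\alpha(\mathbf a)=\gamma'(\alpha-a_1)$ is Lipschitz in $\alpha$, since $\gamma'$ is $M$-Lipschitz on $I$.

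\textbf{Transversality.} This is the computation. Fix $\mathbf a\neq\mathbf b$ in $[0,1]^2$ and write
\[
D_\alpha:=\frac{\Phi_\alpha(\mathbf a)-\Phi_\alpha(\mathbf b)}{|\mathbf a-\mathbf b|}.
\]
The simple curvature condition gives
\[
|\partial_\alpha D_\alpha|=\frac{|\gamma'(\alpha-a_1)-\gamma'(\alpha-b_1)|}{|\mathbf a-\mathbf b|}\geqslant M^{-1}\frac{|a_1-b_1|}{|\mathbf a-\mathbf b|}.
\]
I split into two cases.
\begin{enumerate}[\rm(i)]
\item If $|a_1-b_1|\geqslant c\,|\mathbf a-\mathbf b|$ for a small constant $c$ (depending on $M$), then $|\partial_\alpha D_\alpha|\geqslant c/M$, with no condition on $D_\alpha$ needed.
\item If $|a_1-b_1|<c\,|\mathbf a-\mathbf b|$, then $|a_2-b_2|\geqslant(1-c)|\mathbf a-\mathbf b|$. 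Since $|\gamma(\alpha-a_1)-\gamma(\alpha-b_1)|\leqslant|a_1-b_1|$, we get $|D_\alpha|\geqslant 1-2c$. So the hypothesis $|D_\alpha|\leqslant\delta$ is never met once $\delta<1-2c$.
\end{enumerate}
Taking $c=1/4$ and $\delta=\min\{1/4,\,1/(4M)\}$ yields transversality with a uniform constant. The theorem then gives $\mathcal L^1(\Phi_\alpha(E))=0$ for a.e.\ $\alpha$, and integrating gives $\Fav_\Gamma(E)=0$.

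\textbf{Main obstacle.} The real difficulty is the transversal Besicovitch theorem itself, which is not among the results stated earlier in the paper. If it must be proved rather than quoted, I would follow Besicovitch's original density/cone argument, replacing lines by the level curves $\Phi_\alpha^{-1}(\beta)$. These level curves are translates of the reflected curve $-\Gamma$, hence $C^{1,1}$ with slopes bounded by $1$.

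The steps would be as follows.
\begin{enumerate}[\rm(1)]
\item Use the structure theory for purely unrectifiable sets of finite length: at $\mathcal H^1$-a.e.\ point $x$ of $E$, every double cone with vertex $x$ contains a large amount of $E$ at arbitrarily small scales.
\item Show that the transversality estimate converts this cone condition into the following statement: for a.e.\ $\alpha$, almost every level curve through $x$ meets $E$ again near $x$.
\item Conclude via a Vitali covering argument that $\Phi_\alpha$ is non-injective on $E$ in a way that forces $\mathcal L^1(\Phi_\alpha(E))=0$.
\end{enumerate}
The linearization error in step (2), replacing the level curve near $x$ by its tangent line, is $O(M r^2)$ at scale $r$. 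This is negligible against cone widths of order $r$, which is why the $C^{1,1}$ control on $\gamma$ is enough.
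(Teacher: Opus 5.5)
Your strategy is to check that $\{\Phi_\alpha\}$ is a transversal family and then quote the Besicovitch--Federer theorem for transversal families of Hovila, J\"arvenp\"a\"a, J\"arvenp\"a\"a and Ledrappier \cite{HovJ2Led}. This is, to my understanding, the route of \cite{ST17}, to which the paper simply defers; the paper also notes that the statement follows from the multiscale Theorem~\ref{thm:quant curve besi}, see \cite[Theorem~2.1]{DT21}. Your transversality computation is correct.

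The step that does not go through as written is the regularity check. In theorems of this type the regularity hypothesis is not about $\alpha\mapsto\Phi_\alpha(\mathbf a)$ at a fixed point. It is imposed on the normalized differences $D_\alpha=T_{\mathbf a,\mathbf b}(\alpha)$, uniformly over $\mathbf a\neq\mathbf b$: bounds on higher $\alpha$-derivatives, or uniform H\"older continuity of $\partial_\alpha D_\alpha$. That uniformity is what controls the sublevel sets $\{\alpha:|D_\alpha|\leqslant\delta\}$ as $\mathbf b\to\mathbf a$. The simple curvature condition does not give this. For $\mathbf b=\mathbf a-(h,0)$ with $h>0$,
\[
\partial_\alpha D_\alpha=\frac{\gamma'(\alpha-a_1)-\gamma'(\alpha-a_1+h)}{h}\longrightarrow-\gamma''(\alpha-a_1)\qquad(h\to 0)
\]
wherever $\gamma''$ exists. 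By Arzel\`a--Ascoli, even equicontinuity in $\alpha$ of these functions would force $\gamma\in C^2$. The hypothesis only gives $\gamma\in C^{1,1}$ with $\gamma''\in L^\infty$. So, as written, your argument covers only smoother curves, not the class in the statement.

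There are two ways to close this.
\begin{enumerate}[\rm(1)]
\item Verify that the theorem you quote needs nothing beyond $|\partial_\alpha D_\alpha|\lesssim 1$ together with transversality. For this family $D_\alpha$ is strictly monotone in $\alpha$ when $a_1\neq b_1$, and a hands-on argument would exploit exactly that.
\item Follow the paper's other pointer. For purely unrectifiable $E$, Proposition~\ref{prop:qual and quant rect equiv} lets you choose scales inductively. Take $r_n^-$ below the radii of a finite cover witnessing $\mathcal H^1_{0,r_n^+}(E)\lesssim\mathcal H^1(E)+1$. Then take $r_{n+1}^+\leqslant\frac12 r_n^-$ so small that $\mathcal R_E(r_{n+1}^+,r_n^-,1/r_n^-)\leqslant N^{-1/100}$. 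Theorem~\ref{thm:quant curve besi} then yields $\Fav_\Gamma(E)\lesssim N^{-1/100}(\mathcal H^1(E)+1)$ for every $N$, using exactly the bi-Lipschitz control on $\gamma'$.
\end{enumerate}

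There are also two smaller points.
\begin{enumerate}[\rm(1)]
\item Compact sets $\Lambda$ with $\Lambda-[0,1]\subseteq I$ never reach the $\alpha$ for which $\alpha-[0,1]$ meets $I$ only partially; if $|I|<1$ there are no such $\Lambda$ at all. Translating $E$ only shifts $\alpha$. Instead, extend $\gamma'$ past $I$ to a strictly monotone bi-Lipschitz function on $I+[-1,1]$, which can only enlarge each $\Phi_\alpha(E)$.
\item Step (3) of your fallback sketch is not an argument. Non-injectivity of $\Phi_\alpha|_E$ does not by itself force $\mathcal L^1(\Phi_\alpha(E))=0$.
\end{enumerate}
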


See \cite{ST17} for detailed proof.
 For further developments on the nonlinear Favard length problem, we refer to \cite{BT20, CDT21, DT21, HovJ2Led, LMT26, ST17,Taylor24survey}.

\subsection{A quantitative Besicovitch projection theorem for curve projections}
Following the approach introduced by Tao in \cite{Tao09}, Davey and the second listed author used multiscale analysis to quantify the Besicovitch projection theorem for curve projections in \cite{DT21}. We begin with the definition of Hausdorff content.
\begin{dfn}[Hausdorff Content]
    Let $E\subseteq \mathbb R^2$ and $0\leqslant r^-<r^+<\infty$. The \textit{$1$-dimensional restricted Hausdorff content} of $E$ is defined as
\[
    \mathcal{H}^1_{r^-,r^+}(E):=\inf\left\{\sum\limits_{j=1}^\infty \text{diam}(B_j): B_j\ \text{open balls},E\subseteq\bigcup_{j=1}^\infty B_j ,r^- < r(B_j) < r^+\right\}.
\]
\end{dfn}

We now state the quantitative nonlinear Besicovitch projection theorem in \cite{DT21}. Roughly speaking, they showed that if $E$ is close to being purely $1$-unrectifiable, $\Fav_\Gamma(E)$ is very small.
\begin{thm}[Quantitative Besicovitch Projection Theorem for Curve Projections, \cite{DT21}]\label{thm:quant curve besi}
    Let $E\subseteq [0,1]^2$ be a compact set with $\mathcal{H}^1(E)\leqslant L$ for some $L>0$. Suppose that there exists a sequence of scales
    \[0<r_N^-<r_N^+<r_{N-1}^-<r_{N-1}^+<\cdots<r_1^-<r_1^+\leqslant 1\]
    with the following properties:
    \begin{enumerate}[\rm(1)]
        \item {\rm(Uniform Length Bound)} $\mathcal{H}_{r_n^-,r_n^+}^1(E)\leqslant L$ for every $1\leqslant n\leqslant N$.
        \item {\rm(Scale Separation)} $r_{n+1}^+\leqslant \frac{1}{2}r_n^-$ for every $1\leqslant n< N$.
        \item {\rm(Near Unrectifiability)} $\mathcal{R}_E\left(r_{n+1}^+,r_n^-,\frac{1}{r_n^-}\right)\leqslant N^{-1/100}$.
    \end{enumerate}
    Let $\Gamma\subseteq\mathbb R^2$ be a curve satisfying the simple curvature condition. Then
    \[\Fav_\Gamma(E)\lesssim N^{-1/100}L.\]
\end{thm}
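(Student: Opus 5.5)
The proof will follow Tao's multiscale argument for the linear quantitative Besicovitch theorem, with lines replaced by the translates of $\Gamma$. It runs by contradiction. Suppose $\Fav_\Gamma(E)\geqslant C N^{-1/100}L$ for a large constant $C$. Set $K:=\{(\alpha,\beta):\Phi_\alpha^{-1}(\beta)\cap E\neq\varnothing\}$, and for $(\alpha,\beta)$ write $\Gamma_{\alpha,\beta}:=\{\mathbf a:\Phi_\alpha(\mathbf a)=\beta\}$. Since $\Phi_\alpha(\mathbf a)=a_2+\gamma(\alpha-a_1)$, this is the graph $a_2=\beta-\gamma(\alpha-a_1)$, a reflected translate of $\Gamma$.

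\emph{Step 1: multiplicity functions.} For each $n$, use the uniform length bound to fix a cover of $E$ by balls $B_{n,j}$ with $r_n^-<r(B_{n,j})<r_n^+$ and $\sum_j\operatorname{diam}(B_{n,j})\leqslant 2L$. Define the multiplicity
\[
f_n(\alpha,\beta):=\#\{j: B_{n,j}\cap\Gamma_{\alpha,\beta}\neq\varnothing\}.
\]
Since $|\partial_{a_2}\Phi_\alpha|=1$, for each fixed $\alpha$ a ball of radius $r$ meets $\Gamma_{\alpha,\beta}$ only for $\beta$ in an interval of length $\lesssim r$. Integrating over the compact parameter set gives $\int f_n\lesssim L$ for every $n$. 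By scale separation the functions $f_n$ live at genuinely different scales, and $\sum_{n\leqslant N}\int f_n\lesssim NL$. Since $f_n\geqslant 1$ on $K$ and $|K|\gtrsim N^{-1/100}L$, Chebyshev and pigeonholing produce a subset $K'\subseteq K$ of comparable measure and a large set $\mathcal N\subseteq\{1,\dots,N\}$ of scales (of size $\gtrsim N^{1-1/50}$, say) such that $f_n\lesssim N^{1/50}$ on $K'$ for all $n\in\mathcal N$. In words, typical fibres meet $E$ with bounded multiplicity at many scales.

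\emph{Step 2: from bounded multiplicity to a Lipschitz graph.} Take a density point $(\alpha_0,\beta_0)$ of $K'$ and a point $x\in E\cap\Gamma_{\alpha_0,\beta_0}$. The fibres through points near $x$ form a one-parameter family indexed by $\alpha$. Their tangent directions at $x$ are $(1,\gamma'(\alpha-x_1))$, and because $\gamma'$ is bi-Lipschitz these fan out at rates comparable to $|\alpha-\alpha'|$.

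The nonlinear analogue of Tao's cone lemma is then the following. If most fibres through the $r_n$-neighbourhood of $x$ have multiplicity $\lesssim N^{1/50}$ at scale $n$, then at scale $r_n$ the set $E$ near $x$ cannot contain two well-separated pieces lying in a "transverse cone". Such a configuration would force a positive proportion of the fibres to hit two distinct balls $B_{n,j}$, contradicting the multiplicity bound. I will carry this out by comparing $\Gamma_{\alpha,\beta}$ with its tangent line at $x$. The $C^{1,1}$ bound on $\gamma$ gives an error $O(M r_n^2)$, which is negligible relative to $r_n$.

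Iterating over the scales in $\mathcal N$ and applying Tao's stopping-time construction yields a Lipschitz function $F$ with $\Lip(F)\lesssim 1/r_n^-$. At scale $r_{n+1}^+$, the $r_{n+1}^+$-neighbourhood of its graph over an interval of length $\geqslant r_n^-$ captures a fraction $>N^{-1/100}$ of that interval. This contradicts the near-unrectifiability hypothesis, so $\Fav_\Gamma(E)\lesssim N^{-1/100}L$.

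\emph{Main obstacle.} The hardest step is the cone lemma in Step 2. Tao's version exploits the fact that all lines in a fixed direction are parallel, so "two points of $E$ on one fibre" is a translation-invariant condition. Here the fibres $\Gamma_{\alpha,\beta}$ bend, and the direction family at a point depends on $x_1$. My plan is to work in the local coordinates $(\alpha,\beta)\mapsto$ tangent-line data at $x$. I would use the bi-Lipschitz bound $M^{-1}|s-t|\leqslant|\gamma'(s)-\gamma'(t)|$ to show that the induced map from $\alpha$ to directions is bi-Lipschitz, uniformly in $x\in[0,1]^2$, because $\Lambda-[0,1]\subseteq I$. This reduces the estimate to the linear case up to curvature errors, which are absorbed using the scale separation $r_{n+1}^+\leqslant r_n^-/2$.
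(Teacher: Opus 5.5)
The paper contains no proof of this theorem. It is quoted from \cite{DT21}, which adapts Tao's multiscale argument \cite{Tao09} to curve projections, so your outline has to stand on its own. It has the right nonlinear ingredients: the bound $\int f_n\lesssim L$, the fan of fibre directions $(1,\gamma'(\alpha-x_1))$, and its transversality coming from the bi-Lipschitz property of $\gamma'$. But the step that carries the whole theorem is missing, and the cone lemma you put in its place cannot hold as stated.

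\emph{The cone lemma and the iteration over scales.} Two well-separated pieces of $E$ in the fan at $x$ only create fibres of multiplicity $2$, which is perfectly consistent with $f_n\lesssim N^{1/50}$. So there is no single-scale contradiction.

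Nor can you accumulate such pieces fibre by fibre over the scales in $\mathcal N$. For $y\neq x$, the map $\alpha\mapsto\Phi_\alpha(y)-\Phi_\alpha(x)=y_2-x_2+\gamma(\alpha-y_1)-\gamma(\alpha-x_1)$ is either constant (if $y_1=x_1$) or strictly monotone with derivative of size $\geqslant M^{-1}|x_1-y_1|$. Hence $y$ lies on at most one fibre through $x$, and cone points found at different scales generically sit on different fibres. Moreover, a pair $x,y$ at distance $d$ raises the multiplicity only on a proportion $O((\rho+\rho')/d)$ of the fibres through the balls of radii $\rho,\rho'$ containing them. Measured against the finer cover, this is small at the distances $d\in[r_{n+1}^+,r_n^-]$ where near-unrectifiability gives information, unless $d$ is comparable to the radii.

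What is missing is a genuinely quantitative mechanism with three parts:
\begin{enumerate}[(i)]
\item near-unrectifiability at level $n$ must force a definite \emph{amount} of $E$ (many fine balls) into the fibre fan around most points;
\item this must raise a suitably truncated multiplicity on a non-negligible part of the Favard set;
\item these gains must add up over many scales.
\end{enumerate}
For (iii) you also need a way to compare multiplicities at different scales; the covers are unrelated, so nothing like $f_{n+1}\geqslant f_n$ comes for free. In your contrapositive form, this means a lemma turning $L^1$ control of multiplicities into mass-type cone sparsity for most points of a \emph{single} window of length $\geqslant r_n^-$ at resolution $r_{n+1}^+$, followed by a quantitative cone-to-graph lemma. ``Applying Tao's stopping-time construction'' supplies none of this.

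\emph{Errors in the set-up.} First, Chebyshev together with $|K|\gtrsim N^{-1/100}L$ bounds each exceptional set $\{f_n>N^{1/50}\}$ only by $\lesssim N^{-1/100}|K|$. A union bound therefore yields a common set of about $N^{1/100}$ good scales on a set of measure comparable to $|K|$, not $N^{1-1/50}$; beyond that, the good scales depend on the fibre.

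Second, a density point of $K'$ only controls fibres with parameters close to $(\alpha_0,\beta_0)$. ``Most fibres through the $r_n$-neighbourhood of $x$'' refers to the strip $\{|\beta-\Phi_\alpha(x)|\lesssim r_n\}$ over all $\alpha\in\Lambda$, which is what your fan argument needs. The density point gives no information for $\alpha$ far from $\alpha_0$, and in a quantitative proof density points must anyway be replaced by pigeonholing over scales.

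Third, the linearisation is harmless only at the level of cone apertures. One needs $Mr_n\ll\delta$, which does follow from $r_n^+\leqslant 2^{1-n}$ once $n\gtrsim\log(M/\delta)$. But the error $O(Mr_n^2)$ is not negligible compared with $r_{n+1}^+$, since scale separation only bounds $r_{n+1}^+$ from above. So you cannot replace fibres by tangent lines at the resolution of the finer cover; you have to work with curvilinear cones bounded by fibres through $x$.
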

The qualitative version of Besicovitch projection theorem for curve projections follows as a direct application of Theorem \ref{thm:quant curve besi}; see \cite[Theorem~2.1]{DT21} for a detailed proof.

In Section \ref{sec:results II}, we gave a quantitative two-projection theorem for curve projections, which goes in the opposite direction of Theorem \ref{thm:quant curve besi}. More precisely, while Theorem \ref{thm:quant curve besi} shows that quantitative unrectifiability forces curve projections to be small, our result shows that if two curve projections are small, the set must have small intersection with every Lipschitz graph.

\section{Proof of Theorem \ref{thm:two proj for general proj} and Corollary \ref{cor:two proj for pinned dist}}\label{section:general two proj thm}
We now prove Theorem \ref{thm:two proj for general proj}. The proof relies on the Lebesgue differentiation theorem, Rademacher's differentiation theorem, and the intermediate value theorem to achieve a contradiction. Moreover, this proof will serve as a model to prove Theorem \ref{thm:quant curve two}. Our proof is very similar to that in \cite{Tao09}, but we provide additional details and treat a more general setting.

\begin{proof}[Proof of Theorem \ref{thm:two proj for general proj}]{}
    Assume $E$ is not purely $1$-unrectifiable. Then by Lemma \ref{lem:equiv def of unrect}, there exists a Lipschitz graph $\zeta$ such that $\mathcal{H}^1\left(E\cap \zeta\right)>0$, with either $\zeta=\{(x,F(x)):x\in\mathbb R\}$ or $\zeta=\{(F(x),x):x\in\mathbb R\}$ for some Lipschitz function $F:\mathbb R\to\mathbb R$.

    We first suppose $\zeta=\{(x,F(x)):x\in\mathbb R\}$. Let $A=\left\{x\in\mathbb R:(x,F(x))\in E\right\}$. Then $A$ is a compact set, and it follows that
    $\mathcal{L}^1(A)>0$. By the Lebesgue differentiation theorem and Rademacher's differentiation theorem, there exists $x_0\in A$ such that $F'(x_0)$ exists and
    \[\lim\limits_{r\to 0}\frac{\mathcal{L}^1\left(A\cap [x_0-r,x_0+r]\right)}{2r}=1.\]
    By condition (2), we can further assume
    \[|\nabla P_\lambda(x_0,F(x_0))\wedge \nabla P_{\lambda'}(x_0,F(x_0))|\ne 0.\]
    It follows that
    \[\text{either}\quad\nabla P_\lambda(x_0,F(x_0))\cdot (1,F'(x_0)) \ne 0\quad\text{or}\quad\nabla P_{\lambda'}(x_0,F(x_0))\cdot (1,F'(x_0))\ne 0.\]
    Indeed, if both quantities are equal to $0$, $(1,F'(x_0))$ is orthogonal to both gradient vectors $\nabla P_\lambda(x_0,F(x_0))$ and $\nabla P_{\lambda'}(x_0,F(x_0))$. Hence, $\nabla P_\lambda(x_0,F(x_0))$ and $\nabla P_{\lambda'}(x_0,F(x_0))$ are parallel, which leads to a contradiction. Without loss of generality, we may assume $\nabla P_\lambda(x_0,F(x_0))\cdot(1,F'(x_0))\ne 0$. We show that $\mathcal{L}^1\left(\{P_\lambda(x,F(x)): x\in A\}\right)>0$, from which it will follow that $\mathcal{L}^1(P_\lambda(E))>0$.

    Since $E\subseteq\mathbb R^2$ and $A\subseteq \mathbb R$ is compact, there exists $R>1$ such that $E\subseteq [-R,R]^2$ and $A\subseteq (-R+1,R-1)$, so $x_0\in (-R+1,R-1)$. Let $\varphi:[-R,R]\to \mathbb R$ be defined as
    \[\varphi(x):=P_\lambda(x,F(x)).\]
    Since $P_\lambda$ is Lipschitz on $[-R,R]^2$, $\varphi$ is Lipschitz. Also, $\varphi$ is differentiable at $x_0$, and $\varphi'(x_0)=\nabla P_\lambda(x_0,F(x_0))\cdot(1,F'(x_0))\ne 0$. We may assume $\varphi'(x_0)>0$ by replacing $\varphi$ by $-\varphi$ if necessary.

    Since $\varphi'(x_0)$ exists,
    \[\varphi'(x_0)=\lim_{x\to x_0}\frac{\varphi(x)-\varphi(x_0)}{x-x_0}.\]
    Thus, there exists $0<r_0<1$ such that if $0<|x-x_0|< r_0$,
    \[\left|\frac{\varphi(x)-\varphi(x_0)}{x-x_0}-\varphi'(x_0)\right|<\frac{\varphi'(x_0)}{4}.\]
    This shows that for every $0<r<r_0$,
    \[\varphi(x_0+r)>\varphi(x_0)+\frac{3}{4}\varphi'(x_0)r\quad \text{and}\quad\varphi(x_0-r)<\varphi(x_0)-\frac{3}{4}\varphi'(x_0)r.\]
    By the intermediate value theorem, $\varphi([x_0-r,x_0+r])$ is an interval. Since $\varphi(x_0+r)>\varphi(x_0-r)$,
    \[\mathcal{L}^1\left(\varphi([x_0-r,x_0+r])\right)\geqslant \varphi(x_0+r)-\varphi(x_0-r)>\frac{3}{2}\varphi'(x_0)r.\]

    Since $x_0$ is a density point of $A$, for $0<\varepsilon<\min\left\{1,\frac{3\varphi'(x_0)}{8\Lip(\varphi)}\right\}$, there exists $0<r_1<r_0$ such that
    \[\mathcal{L}^1\left(A\cap [x_0-r_1,x_0+r_1]\right)\geqslant 2r_1(1-\varepsilon),\]
    so
    \[\mathcal{L}^1\left([x_0-r_1,x_0+r_1]\setminus A\right)\leqslant 2r_1\varepsilon.\]
    Since $\varphi$ is Lipschitz,
    \[\mathcal{L}^1\left(\varphi([x_0-r_1,x_0+r_1]\setminus A)\right)\leqslant\Lip(\varphi)\mathcal{L}^1\left([x_0-r_1,x_0+r_1]\setminus A\right)\leqslant 2\Lip(\varphi)r_1\varepsilon,\]
    so
    \begin{align*}
        \mathcal{L}^1(\varphi(A&\cap [x_0-r_1,x_0+r_1]))\\&\geqslant \mathcal{L}^1\left(\varphi([x_0-r_1,x_0+r_1])\right)-\mathcal{L}^1\left(\varphi([x_0-r_1,x_0+r_1]\setminus A)\right)\\
        &> \frac{3}{2}\varphi'(x_0)r_1-2\Lip(\varphi) r_1\varepsilon\\
        &>\frac{3}{4}\varphi'(x_0)r_1.
    \end{align*}
    Finally, observe that
    \[\mathcal{L}^1(P_\lambda(E))\geqslant \mathcal{L}^1\left(\varphi\left(A\cap [x_0-r_1,x_0+r_1]\right)\right)>\frac{3}{4}\varphi'(x_0)r_1>0.\]
    This leads to a contradiction.

    Next, suppose $\zeta=\{(F(x),x):x\in\mathbb R\}$. The proof is essentially the same as above. Let $A=\{x\in \mathbb R:(F(x),x)\in E\}$. We choose desired $x_0\in A$ as before. The required condition still holds: either $\nabla P_\lambda(F(x_0),x_0)\cdot (F'(x_0),1)\ne 0$ or $\nabla P_{\lambda'}(F(x_0),x_0)\cdot (F'(x_0),1)\ne 0$. Without loss of generality, we may assume $\nabla P_\lambda(F(x_0),x_0)\cdot(F'(x_0),1)\ne 0$, and consider $\varphi:\mathbb R\to\mathbb R$ defined as $\varphi(x)=P_\lambda(F(x),x)$ to reach a contradiction.
\end{proof}

\begin{proof}[Proof of Corollary \ref{cor:two proj for pinned dist}]{}
    The proof largely follows the argument of Theorem \ref{thm:two proj for general proj}. We give a brief sketch and highlight the necessary modifications.

    Assume $E$ is not purely $1$-unrectifiable. Then by Lemma \ref{lem:equiv def of unrect}, there exists a Lipschitz graph $\zeta\subseteq\mathbb R^2$ such that $\mathcal{H}^1\left(E\cap \zeta\right)>0$, with either $\zeta=\{(x,F(x)):x\in\mathbb R\}$ or $\zeta=\{(F(x),x):x\in\mathbb R\}$ for some Lipschitz function $F:\mathbb R\to\mathbb R$. We may assume $\zeta=\{(x,F(x)):x\in\mathbb R\}$. The other case is similar.

    Let $$A_1=\left\{x\in\mathbb R:(x,F(x))\in E\setminus \ell_{y,y'}\right\}$$ and $$A_2=\{x\in\mathbb R:(x,F(x))\in E\cap\ell_{y,y'}\}.$$ Then $A_1$ and $A_2$ are Borel sets, and it follows that $\mathcal{L}^1(A_1)+\mathcal{L}^1(A_2)>0$.

    If $\mathcal{L}^1(A_1)>0$. We choose $x_0\in A_1$ such that $x_0$ is a density point of $A_1$, and $F'(x_0)$ exists. Since $(x,F(x))\notin\ell_{y,y'}$,
    \[|((x_0,F(x_0))-y)\wedge ((x_0,F(x_0))-y')|\ne 0,\]
    which implies
    \[\text{either}\quad((x_0,F(x_0))-y)\cdot (1,F'(x_0)) \ne 0\quad\text{or}\quad ((x_0,F(x_0))-y')\cdot (1,F'(x_0))\ne 0.\]
    If $\mathcal{L}^1(A_1)=0$, $\mathcal{L}^1(A_2)>0$. We choose $x_0\in A_2$ such that $(x_0,F(x_0))\in \ell_{y,y'}\setminus\{y,y'\}$, $x_0$ is a density point of $A_2$, and $F'(x_0)$ exists. By Lemma \ref{lem:tangent vec of Lip along a line},
    \[\text{both}\quad((x_0,F(x_0))-y)\cdot (1,F'(x_0)) \ne 0\quad\text{and}\quad ((x_0,F(x_0))-y')\cdot (1,F'(x_0))\ne 0.\]
    In both cases, the argument proceeds as in the proof of Theorem \ref{thm:two proj for general proj}, which leads to a contradiction.
\end{proof}

\section{Alternative proof of Theorem \ref{thm:two proj for general proj}}\label{sec:contrapositive}


We provide an alternative proof of the contrapositive of the two-projection theorem for $1$-rectifiable sets. 
Besides giving a second proof of the classic two-projection theorem, the argument illustrates the geometric mechanism underlying the theorem: the interaction between projections and tangent directions. 

\begin{dfn}[Tangential Differentiability]{}
    Let $E\subseteq\mathbb R^n$ be a $k$-dimensional $C^1$-surface. A function $f:\mathbb R^n\to\mathbb R^m$ is \textit{tangentially differentiable} with respect to $E$ at $x$, if there exists a linear map $\nabla^E f(x):T_xE\to\mathbb R^m$ such that
    \[\lim_{t\to 0}\frac{f(x+tv)-f(x)}{t}=\nabla^E f(x)v,\]
    uniformly on $\{v\in T_x E :|v|=1\}$.

    The \textit{tangential Jacobian} of $f$ with respect to $E$ at $x$ is then defined as
    \[J^Ef(x)=\det(\nabla^E f(x)^*\nabla^E f(x))^{1/2}.\]
\end{dfn}

\begin{lem}[Rectifiability and Approximate Tangent Planes]{}\label{lem:approx tangent plane}
    Let $E\subseteq\mathbb R^2$ be a compact set with $\mathcal{H}^1(E)<\infty$. Then $E$ is $1$-rectifiable if and only if for $\mathcal{H}^1$-a.e.\ $x\in E$, there exists a unique approximate tangent $1$-plane $T_xE$ for $E$ at $x$.
\end{lem}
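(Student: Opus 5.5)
We would prove Lemma \ref{lem:approx tangent plane} by treating the two implications separately, in both cases localizing to Lipschitz graphs and using the density theory of $\mathcal{H}^1$ restricted to $E$. Recall that an approximate tangent line $T_xE$ at $x$ is a line $L$ through the origin such that $\Theta^{*1}(E,x)>0$ and, for every $s>0$, $\limsup_{r\to0} r^{-1}\mathcal{H}^1\bigl(E\cap B(x,r)\setminus (x+X(L,s))\bigr)=0$, where $X(L,s)$ is the cone of vectors making angle less than $s$ with $L$. Only the density bounds $\Theta^{*1}(E,x)\leqslant 1$ and $\Theta^{*1}(E,x)\geqslant 2^{-1}$ for $\mathcal{H}^1$-a.e.\ $x\in E$ (valid since $\mathcal{H}^1(E)<\infty$) will be used as background.

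\emph{Rectifiable implies tangents.} Cover $E$, up to an $\mathcal{H}^1$-null set, by Lipschitz graphs $\zeta_n$; by the discussion before Lemma \ref{lem:equiv def of unrect} we may take them to be $C^1$ graphs (Whitney extension plus Lusin). Fix $n$. For $\mathcal{H}^1$-a.e.\ $x\in E\cap\zeta_n$, the point $x$ is a density point of $E\cap\zeta_n$ relative to $E$, i.e.\ $\mathcal{H}^1(B(x,r)\cap E\setminus\zeta_n)=o(r)$ (Besicovitch's density theorem for the finite measure $\mathcal{H}^1\llcorner E$). Near such $x$, the $C^1$ curve $\zeta_n$ lies inside any cone $x+X(T_x\zeta_n,s)$, so the mass of $E$ outside the cone in $B(x,r)$ is $o(r)$. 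The lower density at $x$ is positive because $E\cap\zeta_n$ has density $1$ along the curve. Thus $T_x\zeta_n$ is an approximate tangent. It is unique because two different lines $L\neq L'$ would force, for small $s$, almost all mass into the disjoint cones $X(L,s)\cap X(L',s)=\{0\}$ up to $o(r)$, contradicting positive density.

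\emph{Tangents imply rectifiability.} This is the substantial direction and the main obstacle. We follow the classical cone argument (Mattila, Theorem 15.19). Fix a countable dense set of lines $L_j$ and rationals $s,\delta$. For each $x$, the tangent lies near some $L_j$, and for $r<\delta$ we have $\mathcal{H}^1(E\cap B(x,r)\setminus (x+X(L_j,s)))\leqslant \eta r$ and $\mathcal{H}^1(E\cap B(x,r))\geqslant c r$. This splits $E$ into countably many Borel pieces $E_{j,\delta}$ on which these bounds hold uniformly. The key step is to show that each piece, intersected with a ball of radius less than $\delta/2$, satisfies a cone condition: if $y\in E_{j,\delta}\cap B(x,\delta/2)$ lies outside $x+X(L_j,2s)$, then $B(y,\kappa|x-y|)\subseteq B(x,2|x-y|)\setminus(x+X(L_j,s))$ for a small $\kappa$. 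Applying the density lower bound at $y$ gives mass at least $c\kappa|x-y|$ outside the cone at $x$, which contradicts the upper bound $2\eta|x-y|$ once $\eta<c\kappa/2$. Hence any two points of the piece satisfy the cone condition, so the piece is the graph of a Lipschitz function over $L_j$ with constant $\lesssim \cot s$. The delicacy is choosing $\eta$ relative to $c$ and $\kappa$ before the decomposition, which is possible because $c$ can be taken to be $2^{-1}$ up to a null set. Countable union then gives rectifiability.
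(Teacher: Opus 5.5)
The paper does not prove this lemma; it defers to Maggi's book. I therefore compare your proposal with the standard arguments. Your first direction (rectifiable implies unique tangents) is fine.

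The gap is in the converse, at the step you yourself call the delicacy. You index the pieces $E_{j,\delta}$ by the bound $\mathcal{H}^1(E\cap B(x,r))\geqslant cr$ for \emph{all} $r<\delta$. You then use it at $y$ at the radius $\kappa|x-y|$, which is dictated by the pair $(x,y)$. Nothing you have available gives this.

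The bound $\Theta^{*1}(E,x)\geqslant 2^{-1}$, and the condition $\Theta^{*1}(E,x)>0$ in your definition of tangent, concern the \emph{upper} density. They give $\mathcal{H}^1(E\cap B(x,r))\geqslant(1-o(1))r$ only along some sequence $r_k\to 0$ depending on $x$. They say nothing at every small radius, and nothing at a prescribed radius. For a general set of finite length, the lower density may vanish on a set of positive measure. Positive (indeed unit) lower density a.e.\ is a consequence of rectifiability, so assuming it, e.g.\ by taking $c=2^{-1}$ up to a null set, is circular. The pieces with a uniform lower bound therefore need not exhaust $E$.

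Without that lower bound, the inclusion $B(y,\kappa|x-y|)\subseteq B(x,2|x-y|)\setminus(x+X(L_j,s))$ only tells you that $\mathcal{H}^1(E\cap B(y,\kappa|x-y|))\leqslant 2\eta|x-y|$. That is, $E$ has small density at $y$ at a single scale, which contradicts nothing.

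There are two ways to repair this.

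\emph{Change the definition.} Define approximate tangents by blow-ups: the measures $A\mapsto r^{-1}\mathcal{H}^1(E\cap(x+rA))$ converge weakly-$*$ to $\theta(x)\,\mathcal{H}^1|_{T_xE}$ with $\theta(x)>0$. This is the convention in measure-theoretic references such as the book the paper cites. Then the density exists and is positive. You may add to the indexing a rational $c$ with $\mathcal{H}^1(E\cap B(x,r))\geqslant cr$ for $r<\delta$, and choose $\eta<c\kappa/2$ piece by piece. Your cone argument then becomes the standard rectifiability criterion.

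\emph{Keep your definition.} With the definition you adopted, you need a mechanism that uses only upper density, as in Mattila's book.
\begin{enumerate}
\item Split $E$ into its rectifiable part and its purely unrectifiable part $F$. Being a subset of $E$, $F$ inherits the cone estimates.
\item It suffices to show: for each of the countably many lines $V=L_j^{\perp}$ and a.e.\ $a\in F$, the mass of $F$ in the cone about $V$ is not $o(r)$.
\item Let $B$ be the set of points where that cone mass is $\leqslant\eta r$ for all $r<\delta$. Intersect with a set $S$ of diameter $<\delta/2$.
\item For $a\in B\cap S$, let $h(a)$ be the largest distance from $a$ to a point $b\in B\cap S$ lying in a narrower cone about $V$. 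The points with $h(a)=0$ lie on a Lipschitz graph over $V^{\perp}$, hence form a null set.
\item The part of $B\cap S$ projecting to within $\sim s\,h(a)$ of the projection of $a$ on $V^{\perp}$ lies in the cones at $a$ and at a far such $b$. So it has mass $\lesssim \eta\, h(a)$.
\item A Vitali covering on the line $V^{\perp}$ then gives $\mathcal{H}^1(B\cap S)\lesssim(\eta/s)\,\mathrm{diam}(S)$.
\item For $\eta$ small this contradicts $\Theta^{*1}(B,\cdot)\geqslant 2^{-1}$ a.e., so $\mathcal{H}^1(B)=0$ and hence $\mathcal{H}^1(F)=0$.
\end{enumerate}
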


\begin{lem}[Rademacher-type Theorem for Rectifiable Sets]{}\label{lem:Rademacher type for rect}
    Let $E\subseteq\mathbb R^2$ be a $\mathcal{H}^1$-rectifiable compact set with $\mathcal{H}^1(E)<\infty$, and $f:\mathbb R^2 \to  \mathbb R$ be a Lipschitz function. Then $\nabla^E f(x)$ exists for $\mathcal{H}^1$-a.e.\ $x\in E$.
\end{lem}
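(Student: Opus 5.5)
Since $E$ is $1$-rectifiable with $\mathcal{H}^1(E)<\infty$, I would reduce everything to Lipschitz graphs and then to the classical Rademacher theorem.

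\textbf{Step 1: cover by $C^1$ pieces.} By Definition \ref{defn: rect}, $E$ is covered up to an $\mathcal{H}^1$-null set by countably many Lipschitz graphs $\zeta_n=\{x\omega_1+F_n(x)\omega_2\}$. By Rademacher's theorem and Lusin's theorem (in its Whitney/$C^1$ form for Lipschitz functions on $\mathbb R$), each $F_n$ agrees with a $C^1$ function $G_n$ outside a set of arbitrarily small measure. Since the parametrization $x\mapsto x\omega_1+F_n(x)\omega_2$ is bi-Lipschitz, measure-null sets in the parameter correspond to $\mathcal{H}^1$-null sets on $\zeta_n$. Hence $E$ is covered, up to an $\mathcal{H}^1$-null set, by countably many $C^1$ curves $\Gamma_k$.

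\textbf{Step 2: identify tangents and localize.} Using Lemma \ref{lem:approx tangent plane} and the Lebesgue density theorem for $\mathcal{H}^1\llcorner(E\cap\Gamma_k)$, at $\mathcal{H}^1$-a.e.\ $x\in E\cap\Gamma_k$ the approximate tangent $T_xE$ coincides with the classical tangent $T_x\Gamma_k$. So it suffices to prove tangential differentiability of $f$ along $\Gamma_k$ at a.e.\ point of $E\cap\Gamma_k$, and then transfer it to $E$. The transfer works because tangential differentiability only uses the single direction $T_xE$: points $x+tv$ with $v\in T_xE$ differ from the corresponding points $\gamma_k(s)$ of $\Gamma_k$ by $o(t)$, and the error is absorbed by the Lipschitz bound on $f$.

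\textbf{Step 3: one-variable Rademacher.} Parametrize $\Gamma_k$ locally by a $C^1$ map $\gamma_k$ with $\gamma_k'\ne0$. The composition $f\circ\gamma_k$ is Lipschitz on $\mathbb R$, so by Rademacher's theorem it is differentiable a.e. At such a parameter $s$, set $\nabla^E f(x)(\gamma_k'(s)):=(f\circ\gamma_k)'(s)$ and extend linearly to $T_xE$. Because the tangent space is one-dimensional, uniformity over the unit vectors $v$ is automatic (only $\pm v$).

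\textbf{Step 4: verify the limit.} Check that $\frac{f(x+tv)-f(x)}{t}\to\nabla^E f(x)v$. Write $|f(x+tv)-f(\gamma_k(s+t/|\gamma_k'(s)|))|\le \Lip(f)\,o(t)$ by the $C^1$ property of $\gamma_k$; the remaining difference quotient converges by Step 3.

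The main obstacle is the bookkeeping in Step 2: one must ensure that the exceptional sets (non-differentiability points, points where the curves disagree, non-density points) are $\mathcal{H}^1$-null in $E$ and not merely in parameter space. The bi-Lipschitz parametrization handles this.
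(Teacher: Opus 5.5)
Your proof is correct and is essentially the standard argument. The paper does not prove this lemma itself; it cites Maggi's book, and your route matches the proof given there: cover $E$ up to an $\mathcal{H}^1$-null set by countably many $C^1$ curves, use locality of approximate tangents to get $T_xE=T_x\Gamma_k$ for $\mathcal{H}^1$-a.e.\ $x\in E\cap\Gamma_k$, and apply one-variable Rademacher to $f\circ\gamma_k$. Two small refinements: the ``transfer'' in Step 2 is automatic, since $\nabla^E f(x)$ depends only on $f$ along the line $x+T_xE$, and the null-set bookkeeping needs only that the parametrizations are (locally) Lipschitz, not bi-Lipschitz.
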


\begin{prop}[Area Formula for Rectifiable Sets]{}\label{prop:area formula}
    Let $E\subseteq \mathbb R^2$ be a $1$-rectifiable set with $\mathcal{H}^1(E)<\infty$, and $f:\mathbb R^2\to\mathbb R$ be a Lipschitz function. Then
    \[\int_{E} J^Ef(x)\,d\mathcal{H}^1(x)=\int_{f(E)} \mathcal{H}^0\left(E\cap f^{-1}(y)\right)\,d\mathcal{H}^1(y).\]
\end{prop}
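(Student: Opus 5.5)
The area formula stated in Proposition \ref{prop:area formula} is a standard result of geometric measure theory, and the plan is to reduce it to the one-dimensional area formula for Lipschitz maps $g:\mathbb R\to\mathbb R$ by means of Lipschitz parametrizations.

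\emph{Step 1: decomposition.} Since $E$ is $1$-rectifiable, there are Lipschitz graphs $\zeta_n$ with $\mathcal H^1(E\setminus\bigcup_n\zeta_n)=0$. By Lemma \ref{lem:equiv def of unrect} and the remarks preceding it, we may take each $\zeta_n$ parametrized over a coordinate axis, $\gamma_n(t)=(t,F_n(t))$ or $(F_n(t),t)$. We then disjointify: set $E_n=(E\cap\zeta_n)\setminus\bigcup_{m<n}\zeta_m$ and $A_n=\gamma_n^{-1}(E_n)$, which is Borel. The null remainder $Z=E\setminus\bigcup E_n$ contributes nothing to either side. Its left-hand contribution vanishes trivially. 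For the right-hand side, $\mathcal H^1(f(Z))\le \Lip(f)\mathcal H^1(Z)=0$, and the integral $\int \mathcal H^0(Z\cap f^{-1}(y))\,dy$ vanishes by the standard inequality $\int^*\mathcal H^0(Z\cap f^{-1}(y))\,dy\le \Lip(f)\mathcal H^1(Z)$, which is proved by covering $Z$ by small sets and applying Fatou's lemma. Both sides are countably additive over the disjoint pieces $E_n$, since $\mathcal H^0(E\cap f^{-1}(y))=\sum_n\mathcal H^0(E_n\cap f^{-1}(y))$ up to the null contribution of $Z$. It therefore suffices to prove the formula for a single $E_n$.

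\emph{Step 2: the single-graph case.} On $A_n$ the parametrization $\gamma_n$ is injective and bi-Lipschitz onto its image, and $\mathcal H^1(\gamma_n(B))=\int_B|\gamma_n'|\,dt$ for Borel $B\subseteq A_n$. This is the one-dimensional area formula for an injective Lipschitz curve, which can be proved by density arguments much like those in the proof of Theorem \ref{thm:two proj for general proj}.

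Set $g=f\circ\gamma_n$; it is Lipschitz on $\mathbb R$. At almost every $t\in A_n$ that is a density point of $A_n$ and a point where both $F_n'(t)$ and $g'(t)$ exist, the approximate tangent line of $E$ at $\gamma_n(t)$ is spanned by $\gamma_n'(t)$. Lemma \ref{lem:Rademacher type for rect} gives $\nabla^E f$ at almost every point, and comparing difference quotients along $A_n$ shows $g'(t)=\nabla^Ef(\gamma_n(t))\gamma_n'(t)$. Hence $J^Ef(\gamma_n(t))|\gamma_n'(t)|=|g'(t)|$, and
\[\int_{E_n}J^Ef\,d\mathcal H^1=\int_{A_n}|g'(t)|\,dt.\]

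\emph{Step 3: the 1D Banach indicatrix.} It remains to show $\int_{A_n}|g'|\,dt=\int\#(A_n\cap g^{-1}(y))\,dy$, which equals the right-hand side by injectivity of $\gamma_n$. This is the classical area formula for Lipschitz $g:\mathbb R\to\mathbb R$, proved in three parts.

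First, $\mathcal L^1(g(\{t\in A_n:g'(t)=0\}))=0$, by a Vitali covering with intervals on which $g$ has small oscillation.

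Second, the set $\{g'\neq0\}$ can be partitioned into countably many Borel pieces $B_k$ on which $g$ is bi-Lipschitz with $|g(s)-g(t)|/|s-t|$ within a factor $1+\epsilon$ of $|g'(t)|$. This uses Lusin-type approximation of $g'$ together with the uniform-differentiability decomposition.

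Third, on each $B_k$ we have $\mathcal L^1(g(B))\approx_\epsilon\int_B|g'|$ for Borel $B\subseteq B_k$. Summing over $k$ and letting $\epsilon\to0$ gives the formula, with the multiplicity counted because $\sum_k\chi_{g(B_k\cap A_n)}=\#(A_n\cap g^{-1}(\cdot))$ off a null set.

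\emph{Main obstacle.} I expect the main obstacle to be Step 3's bi-Lipschitz partition, which is the genuinely measure-theoretic core, together with justifying $g'=\nabla^Ef\cdot\gamma_n'$ in Step 2. The latter needs care because $f$ is only tangentially differentiable along $E$, not along the whole graph. I would handle this by restricting to density points of $A_n$: there, points of $A_n$ approach $t$ from both sides, so the derivative of $g$, which exists almost everywhere by Rademacher, must agree with the tangential limit along $E$.
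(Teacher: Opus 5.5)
The paper does not prove this proposition; it defers to Maggi's book. The standard proof there follows the same reduction you propose: split $E$ into disjoint pieces lying on Lipschitz (or $C^1$) curves, use the chain rule $J^Ef(\gamma_n(t))\,|\gamma_n'(t)|=|(f\circ\gamma_n)'(t)|$ for a.e.\ $t$, and apply the Euclidean area formula to both $\gamma_n$ and $f\circ\gamma_n$, so your sketch is correct and follows the standard route. One simplification: with the paper's definition of $\nabla^E f$ (difference quotients along the affine tangent line), the chain rule at any $t$ where $F_n'(t)$ exists and $T_{\gamma_n(t)}E=\mathbb R\gamma_n'(t)$ follows directly from $|f(\gamma_n(t)+s\gamma_n'(t))-f(\gamma_n(t+s))|\leqslant \Lip(f)\,|F_n(t+s)-F_n(t)-sF_n'(t)|=o(|s|)$, so the density-point argument you single out as the main obstacle is not needed.
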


For an introduction to these results and their proofs, see \cite{MaggiBook}.

\begin{cor}[Contrapositive of Two-Projection Theorem for Generalized Projections]{}\label{cor:contrapositive}
    Let $\Lambda\subseteq\mathbb R^2$ be a Borel set, and $P:\mathbb R^2\times \Lambda\to \mathbb R$ be a continuous function such that, for each $\lambda\in \Lambda$, the map $P_\lambda(x):=P(x,\lambda)$ is Lipschitz on every compact subset of $\mathbb R^2$.

    Let $E\subseteq \mathbb R^2$ be a $1$-rectifiable compact set with $0<\mathcal{H}^1(E)<\infty$. Suppose that for each pair $\lambda,\lambda'\in \Lambda$ with $\lambda\ne\lambda'$, $\nabla^EP_\lambda(x)\ne \pm\nabla^EP_{\lambda'}(x)$ for $\mathcal{H}^1$-a.e.\ $x\in E$.
    Then $\mathcal{L}^1(P_\lambda(E))>0$ for all but at most one $\lambda\in \Lambda$.
\end{cor}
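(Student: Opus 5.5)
We argue by contradiction. Suppose there are two distinct parameters $\lambda\ne\lambda'$ in $\Lambda$ with $\mathcal{L}^1(P_\lambda(E))=\mathcal{L}^1(P_{\lambda'}(E))=0$. We derive from the area formula that the tangential derivatives of both maps vanish almost everywhere on $E$, and then contradict the hypothesis $\nabla^EP_\lambda(x)\ne\pm\nabla^EP_{\lambda'}(x)$.

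\textbf{Step 1: localization.} Since $E$ is compact, fix a ball $B\supseteq E$. Replace $P_\lambda$ and $P_{\lambda'}$ by globally Lipschitz extensions (via McShane/Kirszbraun) of their restrictions to $\overline{B}$. This changes neither $P_\lambda(E)$ nor the tangential derivatives on $E$, since those depend only on values near $E$ along $T_xE$. So Lemma \ref{lem:Rademacher type for rect} and Proposition \ref{prop:area formula} apply. Lemma \ref{lem:approx tangent plane} gives $T_xE$ for $\mathcal{H}^1$-a.e.\ $x\in E$, and Lemma \ref{lem:Rademacher type for rect} gives the existence of $\nabla^EP_\lambda(x)$ and $\nabla^EP_{\lambda'}(x)$ on a common full-measure subset $E_0\subseteq E$.

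\textbf{Step 2: area formula.} Apply Proposition \ref{prop:area formula} with $f=P_\lambda$. The right-hand side is an integral over $P_\lambda(E)$, a Lebesgue null set, so it vanishes. Hence
\[\int_E J^EP_\lambda\,d\mathcal{H}^1=0,\]
and therefore $J^EP_\lambda(x)=0$ for $\mathcal{H}^1$-a.e.\ $x\in E$. Since $T_xE$ is one-dimensional, $J^EP_\lambda(x)=|\nabla^EP_\lambda(x)\tau_x|$ for a unit tangent $\tau_x$. This forces the linear map $\nabla^EP_\lambda(x)$ to be zero a.e. The same argument gives $\nabla^EP_{\lambda'}(x)=0$ a.e.

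\textbf{Step 3: contradiction.} On the full-measure set where both tangential derivatives vanish, $\nabla^EP_\lambda(x)=0=\nabla^EP_{\lambda'}(x)$. Since $\mathcal{H}^1(E)>0$, this set is nonempty, and in fact has positive measure. This contradicts the hypothesis that $\nabla^EP_\lambda(x)\ne\pm\nabla^EP_{\lambda'}(x)$ for a.e.\ $x$. Hence at most one $\lambda$ can have a null projection.

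\textbf{Main obstacle.} The delicate point is making the tangential-Jacobian step rigorous: we need the identification $J^Ef=|\nabla^Ef\,\tau|$ for one-dimensional $T_xE$, and the area formula must be applied with a globally Lipschitz $f$. The localization in Step 1 takes care of the latter. We should also confirm that the reading of the hypothesis ($\nabla^E$ as a linear functional on $T_xE$, with a sign ambiguity from orientation) makes the two vanishing derivatives genuinely violate it. They do, since $0=\pm0$.
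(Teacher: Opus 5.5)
Your proposal is correct and takes the same route as the paper: apply the area formula to $P_\lambda$ and $P_{\lambda'}$, and note that the right-hand side integrates over a Lebesgue-null set. This gives $J^EP_\lambda(x)=|\nabla^EP_\lambda(x)v(x)|=0$ for $\mathcal{H}^1$-a.e.\ $x$, so both tangential derivatives vanish a.e., contradicting the hypothesis. Two details the paper leaves implicit are handled correctly in your version: the globally Lipschitz extension from a ball around $E$ (since $P_\lambda$ is only locally Lipschitz), and the use of $\mathcal{H}^1(E)>0$ to ensure the two full-measure sets actually intersect.
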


\begin{proof}
    Assume there exists distinct $\lambda,\lambda'\in \Lambda$ with $\mathcal{L}^1(P_{\lambda}(E))=\mathcal{L}^1(P_{\lambda'}(E))=0$.
    By Proposition \ref{prop:area formula}, we have
    \[\int_{E} J^E P_{\lambda}(x)\,d\mathcal{H}^1(x)=\int_{P_{\lambda}(E)} \mathcal{H}^0\left(E\cap P_{\lambda}^{-1}(y)\right)\,d\mathcal{H}^1(y).\]
    Moreover, by Lemma \ref{lem:approx tangent plane} and \ref{lem:Rademacher type for rect}, we calculate
    \[J^E P_{\lambda}(x)=|\nabla^EP_\lambda(x)v(x)|\quad\text{for $\mathcal{H}^1$-a.e.\ $x\in E$},\]
    where $v(x)\in T_xE$ is a unit tangent vector and $T_x E=\text{span}\{v(x)\}$.
    Since $\mathcal{L}^1(P_{\lambda}(E))=0$,
    we have
    \[\int_{P_{\lambda}(E)} \mathcal{H}^0\left(E\cap P_{\lambda}^{-1}(y)\right)\,d\mathcal{H}^1(y)=0.\]
    Thus,
    \[J^E P_{\lambda}(x)=|\nabla^EP_\lambda(x)v(x)|=0\quad\text{for $\mathcal{H}^1$-a.e.\ $x\in E$}.\]
    Similarly,
    \[J^E P_{\lambda'}(x)=|\nabla^E P_{\lambda'}(x)v(x)|=0\quad\text{for $\mathcal{H}^1$-a.e.\ $x\in E$}.\]
    Hence
    \[|\nabla^EP_\lambda(x)v(x)|=0=|\nabla^E P_{\lambda'}(x)v(x)|\quad\text{for $\mathcal{H}^1$-a.e.\ $x\in E$}.\]
    Since $\nabla^EP_\lambda(x)$ and $\nabla^EP_{\lambda'}(x)$ are linear maps,
    \[\nabla^EP_\lambda(x)=0=\nabla^E P_{\lambda'}(x)\quad\text{for $\mathcal{H}^1$-a.e.\ $x\in E$}.\]
    This contradicts the hypothesis that  $\nabla^EP_\lambda(x)\ne \pm\nabla^EP_{\lambda'}(x)$ for $\mathcal{H}^1$-a.e.\ $x\in E$.
\end{proof}

\begin{rmk}
Note that a higher dimensional variant of Corollary \ref{cor:contrapositive} is not immediate as the tangential Jacobian computation increases in complexity.
A higher dimensional variant, as mentioned above however, is obtained in \cite{BBMT26}.
\end{rmk}

\begin{rmk}
    If $P_\lambda$ is differentiable at $x\in E$, $J^EP_{\lambda}(x)=|\nabla P_\lambda(x)\cdot v(x)|$; this corresponds to Property (2) in Theorem \ref{thm:two proj for general proj}.
    Moreover, a weaker assumption is sufficient to conclude Corollary \ref{cor:contrapositive}: for each pair $\lambda,\lambda'\in\Lambda$ with $\lambda\ne\lambda'$, $\nabla^E P_\lambda(x)$ and $\nabla^E P_{\lambda'}(x)$ do not both vanish for $\mathcal{H}^1$-a.e.\ $x\in E$. That is,
    \[\mathcal{H}^1(\{x\in E:\nabla^E P_\lambda(x)=0=\nabla^E P_{\lambda'}(x)\})<\mathcal{H}^1(E).\]
\end{rmk}

 \section{Proof of Theorem \ref{thm:quant curve two}}\label{section:proof}

Our proof follows that of Tao's quantitative two-projection theorem in the linear setting with some necessary modifications. We first give the setup and outline the strategy for passing from qualitative to quantitative results.

\subsection{Setup}
If $N\lesssim 1$, the conclusion follows from the trivial bound $\mathcal{R}_{E,\text{res}}(r_N,1,N^{1/100})\leqslant 1$, so we may assume that $N$ is sufficiently large. For convenience, we assume that $N^{1/100}\in\mathbb N$ and $N^{1/100}\geqslant 3$.

Since $E\subseteq [0,1]^2$, it suffices to prove that for every Lipschitz function $F:\mathbb R\to \mathbb R$ with $\Lip(F)\leqslant N^{1/100}$,
\begin{equation}\label{eqn:case XY}
    \mathcal{L}^1(\{x\in[0,1]:\exists\,y\in[-r_N,r_N]\ \text{such that}\ (x,F(x)+y)\in E\})\lesssim N^{-1/100},
\end{equation}
and
\begin{equation}\label{eqn:case YX}
    \mathcal{L}^1(\{x\in[0,1]:\exists\,y\in[-r_N,r_N]\ \text{such that}\ (F(x)+y,x)\in E\})\lesssim N^{-1/100}.
\end{equation}
We prove (\ref{eqn:case XY}). The proof of (\ref{eqn:case YX}) is the same with some straightforward modifications. See Remark \ref{rmk:case YX}.

Fix such an $F$. Let
\[A:=\{x\in[0,1]:\exists\,y\in[-r_N,r_N]\ \text{such that}\ (x,F(x)+y)\in E\}.\]
Observe that $A$ is a compact set, because $E$ is compact.
Also, if $x\in A$, there exists $y\in[-r_N,r_N]$ such that $(x,F(x)+y)\in E$. Note that
\[|\Phi_\lambda(x,F(x)+y)-\Phi_\lambda(x,F(x))|=|F(x)+y+\gamma(\lambda-x)-F(x)-\gamma(\lambda-x)|=|y|\leqslant r_N,\]
so
\begin{equation}\label{eqn:alpha neighbor}
    \{\Phi_{\lambda}(x,F(x)):x\in A\}\subseteq \mathcal{N}_{r_N}(\Phi_{\lambda}(E)),\quad\text{for all}\;\;\lambda\in \Lambda.
\end{equation}

We will show (\ref{eqn:case XY}), that is,
\begin{equation*}
    \mathcal{L}^1(A)\lesssim N^{-1/100}
\end{equation*}
More precisely, we prove that if \eqref{eqn:case XY} fails, \eqref{eqn:alpha neighbor} implies that the small projection assumption fails for at least one of $\alpha$ and $\alpha'$.

\subsection{Proof strategy}
Recall that the two-projection theorem is proved by contradiction using the Lebesgue differentiation theorem, Rademacher's differentiation theorem, and intermediate value theorem. In the quantitative setting, we replace these pointwise statements with scale-dependent versions.

For the quantitative Lebesgue differentiation theorem, we select a dyadic interval that intersects $A$ and is stable across finer scales, so that the density remains high on all subintervals. Since the number of scales $\{r_n\}_{n=1}^N$ is finite, we use the pigeonhole principle to isolate a suitable scale. See Section \ref{sec:quant Lebesgue}.

A similar idea applies to the quantitative Rademacher's differentiation theorem. We approximate the projection map $x\mapsto \Phi_\lambda(x, F(x))$ by piecewise linear functions across scales, and use an $L^2$-estimate to obtain good approximate differentiability at a chosen scale. In particular, unlike in the linear case, we must not only quantify the derivative of the relevant Lipschitz function $F$ but also the function $\gamma$. See Section \ref{sec:quant Rademacher}.

Finally, we replace the intermediate value theorem with a counting argument. The main tool is the \textit{Rising Sun Lemma}, which allows us to track the total growth and decay on the well-behaved dyadic intervals and derive a contradiction with the small projection assumption. This step involves several technical difficulties that do not appear in the linear setting. See Section \ref{sec:rising sun}.

\subsection{Quantitative Lebesgue differentiation theorem}\label{sec:quant Lebesgue}
We first prove the quantitative Lebesgue differentiation theorem, which requires the following version of the pigeonhole principle. Note that it is also used in Section \ref{sec:quant Rademacher}.

\begin{lem}[Pigeonhole Principle, Lemma~1.14 in \cite{Tao09}]{}\label{pigeon}
    Let $(X,\mu)$ be a measure space. Suppose $E_0\subseteq E_1\subseteq\cdots\subseteq E_N$ is a sequence of measurable subsets of $X$ with $N\geqslant 2$. If $\varepsilon\in [1/N,1/2]$, there exists $n,m\in\{0,1\cdots,N\}$ with $m-n\geqslant \varepsilon N$ such that $\mu(E_m\setminus E_n)\lesssim\varepsilon\mu(E_N)$.
\end{lem}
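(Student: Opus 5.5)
Since $\mu(E_N\setminus E_0)\leqslant \mu(E_N)$, I would prove the lemma by a telescoping argument over a partition of $\{0,1,\dots,N\}$ into consecutive blocks of length about $\varepsilon N$. First I would dispose of the degenerate case. If $\mu(E_N)=\infty$, any pair with $m-n\geqslant\varepsilon N$ works, for instance $n=0$ and $m=N$. So I assume $\mu(E_N)<\infty$. Then all the set differences have finite measure, and for $n\leqslant m$ we have $\mu(E_m\setminus E_n)=\mu(E_m)-\mu(E_n)$ by nestedness.

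Next I would fix the block length $L:=\lceil \varepsilon N\rceil$. Because $\varepsilon\geqslant 1/N$, this gives $L\geqslant 1$. Because $\varepsilon\leqslant 1/2$ and $N\geqslant 2$, it also gives $L\leqslant \varepsilon N+1\leqslant 2\varepsilon N$ and $L\leqslant N$. I set $K:=\lfloor N/L\rfloor\geqslant 1$ and consider the indices $0, L, 2L,\dots, KL\leqslant N$. The increments
\[
d_k:=\mu(E_{kL})-\mu(E_{(k-1)L}), \qquad k=1,\dots,K,
\]
are nonnegative. They telescope to $\mu(E_{KL})-\mu(E_0)\leqslant \mu(E_N)$.

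By averaging, some $k$ satisfies $d_k\leqslant \mu(E_N)/K$. I then choose $n=(k-1)L$ and $m=kL$, so that $m-n=L\geqslant\varepsilon N$. It remains to bound $1/K$. Since $N/L\geqslant 1$, we have $K\geqslant N/(2L)$, and therefore
\[
\frac{1}{K}\leqslant \frac{2L}{N}\leqslant 4\varepsilon .
\]
This gives $\mu(E_m\setminus E_n)\leqslant 4\varepsilon\,\mu(E_N)$, which is the claimed bound with implied constant $4$.

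The only step that needs care is the integer rounding, namely checking that $L\leqslant N$ and $K\gtrsim 1/\varepsilon$ across the whole range $\varepsilon\in[1/N,1/2]$. The bounds $\varepsilon\geqslant 1/N$ and $\varepsilon\leqslant 1/2$ handle this. There is no genuine obstacle beyond that.
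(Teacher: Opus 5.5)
Your proof is correct, including the rounding ($1\leqslant L\leqslant 2\varepsilon N\leqslant N$ and $K\geqslant N/(2L)$) and the infinite-measure case. One small slip in attribution: the inequality $\varepsilon N+1\leqslant 2\varepsilon N$ comes from $\varepsilon\geqslant 1/N$, not from $\varepsilon\leqslant 1/2$.

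The paper uses the same window length $k=\lceil\varepsilon N\rceil$ but organizes the count differently. Instead of $\lfloor N/k\rfloor$ disjoint blocks, it averages over all $N-k+1$ overlapping windows $E_{n+k}\setminus E_n$ with $0\leqslant n\leqslant N-k$. It then double counts: each layer $E_j\setminus E_{j-1}$ lies in at most $k$ windows, so $\sum_n\mu(E_{n+k}\setminus E_n)\leqslant k\,\mu(E_N)$. Pigeonholing gives a window of measure at most $\frac{k}{N-k}\mu(E_N)$. Both routes give the same $O(\varepsilon)$ bound.

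Your disjoint-block version is slightly simpler, because a plain telescoping sum replaces the multiplicity count. It also produces an explicit constant, $4$. The paper instead leaves the estimate $\frac{k}{N-k}\lesssim\varepsilon$ to the reader; this needs $N-k\gtrsim N$, which holds since $k\leqslant\lceil N/2\rceil$.

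What the sliding-window average buys is flexibility. By Markov's inequality it shows that a positive proportion of all $\sim N$ starting indices give good windows, whereas your argument gives this only among $\sim 1/\varepsilon$ block positions. For the later applications, where the window must sit inside a range like $[0.1N,0.9N]$, either version works equally well once you apply it to the corresponding subchain.
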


\begin{proof}
    Let $j,k\in \{1,\cdots,N\}$. Then,
    $E_j\setminus E_{j-1}\subseteq E_{n+k}\setminus E_n$ if and only if $\max\{j-k,0\}\leqslant n\leqslant\min\{j-1,N-k\}.$
    Hence, $E_j\setminus E_{j-1}$ can belong to at most $k$ sets of the form $E_{n+k}\setminus E_n$. Note that
    \[E_{n+k}\setminus E_n=\bigcup\limits_{j=n+1}^{n+k} E_j\setminus E_{j-1}\text{ and so, } \mu(E_{n+k}\setminus E_n)=\sum\limits_{j=n+1}^{n+k} \mu(E_j\setminus E_{j-1}).\]
    It follows that
    \[\sum\limits_{n=0}^{N-k}\mu(E_{n+k}\setminus E_n)=\sum\limits_{n=0}^{N-k}\sum\limits_{j=n+1}^{n+k} \mu(E_j\setminus E_{j-1})\leqslant k\sum\limits_{j=1}^{N} \mu(E_j\setminus E_{j-1})\leqslant k\mu(E_N).\]
    By the pigeonhole principle, there exists $n\in \{0,1,\cdots,N-k\}$ such that
    \[\mu(E_{n+k}\setminus E_n)\leqslant \frac{k}{N-k}\mu(E_N).\]
    Setting $k=\left\lceil \varepsilon N\right\rceil$ gives the desired result.
\end{proof}

Now we state the quantitative Lebesgue differentiation theorem.


\begin{prop}[Quantitative Lebesgue Differentiation Theorem]{}
    For $n\in \{1,\cdots, N\}$, partition $[0,1]$ into dyadic intervals $\{[jr_n,(j+1)r_n]:j\in\mathbb N\}$ with length $r_n$, and let
    \[A_n:=\bigcup_{\substack{[jr_n,(j+1)r_n]\subseteq [0,1]\\ [jr_n,(j+1)r_n]\cap A\ne \varnothing}}[jr_n,(j+1)r_n].\]
    Then there exists an $n_0\in [0.1N,0.9N]\cap\mathbb N$ such that
    \begin{equation}\label{eqn:Delta A bound}
        \mathcal{L}^1\left(\Delta A\right)\lesssim N^{-3/100}
    \end{equation}
    where $\Delta A=A_{n_0-N^{-3/100}N}\setminus A_{n_0+N^{-3/100}N}$.
\end{prop}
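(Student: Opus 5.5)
We apply the pigeonhole principle (Lemma \ref{pigeon}) to the nested sequence of sets $A_n$. Since the scales $r_n=2^{-j_n}$ are dyadic and strictly decreasing, every dyadic interval of length $r_{n+1}$ lies inside a unique dyadic interval of length $r_n$. If a finer interval meets $A$, so does its parent. Hence $A_{n+1}\subseteq A_n$ for all $n$, and each $A_n$ is a finite union of closed intervals, so it is measurable. Reversing the indices gives an increasing chain of the form required by the lemma, with $\mu=\mathcal{L}^1$ on $[0,1]$.

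To land $n_0$ in $[0.1N,0.9N]$, I restrict attention to the middle range of indices. Set $K=\lfloor 0.8N\rfloor$ and consider the increasing chain $E_k:=A_{\lfloor 0.9N\rfloor-k}$ for $k=0,\dots,K$. Then $E_0\subseteq E_1\subseteq\cdots\subseteq E_K\subseteq[0,1]$. Apply Lemma \ref{pigeon} with $\varepsilon$ comparable to $N^{-3/100}$; more precisely, take $\varepsilon=2N^{-3/100}N/K\sim N^{-3/100}$. For $N$ large this satisfies $\varepsilon\in[1/K,1/2]$. The lemma produces indices $n<m$ with $m-n\geqslant\varepsilon K\geqslant 2N^{-3/100}N$ and
\[\mathcal{L}^1(E_m\setminus E_n)\lesssim\varepsilon\,\mathcal{L}^1(E_K)\leqslant \varepsilon\lesssim N^{-3/100},\]
using $\mathcal{L}^1(E_K)\leqslant 1$.

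Now translate back to the original indexing. The set $E_m\setminus E_n$ equals $A_{a}\setminus A_{b}$ with $a=\lfloor0.9N\rfloor-m$ and $b=\lfloor0.9N\rfloor-n$, so $b-a\geqslant 2N^{-3/100}N$. Let $n_0$ be the midpoint $\lfloor (a+b)/2\rfloor$, which lies in $[a,b]\subseteq[0.1N,0.9N]$. Then $n_0-N^{-3/100}N\geqslant a$ and $n_0+N^{-3/100}N\leqslant b$, up to the rounding issues discussed below. Since the $A_n$ decrease in $n$, we get
\[\Delta A=A_{n_0-N^{-3/100}N}\setminus A_{n_0+N^{-3/100}N}\subseteq A_a\setminus A_b,\]
which gives \eqref{eqn:Delta A bound}. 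For $N$ large the indices $n_0\pm N^{-3/100}N$ also stay within $[1,N]$, so all the sets involved are defined.

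The only real obstacle is bookkeeping. The index $N^{-3/100}N=N^{97/100}$ need not be an integer, so I would either interpret it as $\lfloor N^{97/100}\rfloor$ or adjust $\varepsilon$ by a constant factor to absorb the rounding. I would also check that the window of half-width $N^{97/100}$ around $n_0$ fits inside $[a,b]$, with the factor $2$ in $\varepsilon$ giving the needed slack. Apart from this, the argument is a direct application of Lemma \ref{pigeon}.
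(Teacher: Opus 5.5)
Your argument is correct and follows the paper's route: the nesting $A\subseteq A_N\subseteq\cdots\subseteq A_1\subseteq[0,1]$, combined with the pigeonhole Lemma~\ref{pigeon} applied to the reversed chain. The paper's proof only cites that lemma, and your choice $\varepsilon\sim N^{-3/100}$, the restriction to the middle range of indices, and centering $n_0$ at the midpoint of the window supply exactly the bookkeeping the paper leaves implicit.
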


\begin{proof}
    Since we assumed $N^{1/100}\geqslant 3$,  $n\pm N^{-3/100}N\in \{1,2,\cdots,N\}$ for every $n\in [0.1N,0.9N]\cap\mathbb N$. Note that
    \[A\subseteq A_N\subseteq\cdots\subseteq A_1\subseteq[0,1].\]
    The desired conclusion then follows from Lemma \ref{pigeon}.
\end{proof}

Informally, most points in the coarse-scale set $A_{n_0-N^{-3/100}N}$ remain density points at the finer scale $A_{n_0+N^{-3/100}N}$. Although $A_{n_0+N^{-3/100}N}$ is slightly larger than $A$, it still serves as a good approximation for $A$ in our analysis.

\subsection{Quantitative Rademacher's differentiation theorem}\label{sec:quant Rademacher}
Recall that $\Gamma=\{\left(t,\gamma(t)\right):t\in\mathbb R\}$. By the nonlinear nature of the curve projections, it is not sufficient to quantify the behavior of $F$ alone. We also need to quantify the curve $\gamma$ so that it admits a good linear approximation. The heuristic is to control the derivative of the function $x \mapsto \Phi_\lambda(x,F(x))$ for $\lambda\in \{\alpha,\alpha'\}$. This derivative takes the form $F'(x)-\gamma'(\lambda-x)$.

For $n\in\{1,2,\cdots,N\}$, recall the dyadic grid $\{jr_n\in [0,1]:j\in\mathbb N\}$. Let $F_n:[0,1]\to \mathbb R$ be the unique piecewise linear function that agrees with $F$ on this dyadic grid, that is, $F_n(jr_n)=F(jr_n)$, and is linear on each dyadic interval $[jr_n,(j+1)r_n]$. For each $\lambda\in\{\alpha,\alpha'\}$, let $\gamma_\lambda:[0,1]\to \mathbb R$ be defined as $\gamma_\lambda(x):=\gamma(\lambda-x)$. Similarly, for $\lambda\in\{\alpha,\alpha'\}$, let $\gamma_{\lambda,n}:[0,1]\to \mathbb R$ be the unique piecewise linear function that agrees with $\gamma_\lambda$ on this dyadic grid.

We start with a property of the proceeding piecewise linear functions that will be used in Section \ref{sec:contradiction}.
\begin{lem}[Linear Approximation Difference]{}\label{lem:L infty difference}
For every $n\in\{1,\cdots, N\}$,
\begin{enumerate}[\rm(1)]
    \item $\|F-F_n\|_{L^\infty([0,1])}\leqslant N^{1/100}r_n$, and
    \item $\|\gamma_\lambda-\gamma_{\lambda,n}\|_{L^\infty([0,1])}\leqslant r_n$ for $\lambda\in\{\alpha,\alpha'\}$.
\end{enumerate}
\end{lem}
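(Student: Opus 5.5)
The plan is to argue on one dyadic interval at a time, since both estimates are local. Fix $n$ and a grid interval $J=[jr_n,(j+1)r_n]\subseteq[0,1]$. On $J$, both $F_n$ and $\gamma_{\lambda,n}$ are the linear interpolants of $F$ and $\gamma_\lambda$ between the endpoints of $J$. So for $x\in J$ we can write $x=(1-t)jr_n+t(j+1)r_n$ with $t\in[0,1]$, and then
\[F(x)-F_n(x)=(1-t)\bigl(F(x)-F(jr_n)\bigr)+t\bigl(F(x)-F((j+1)r_n)\bigr).\]
The same identity holds with $F$ replaced by $\gamma_\lambda$. It therefore suffices to bound differences of each function between $x$ and the two endpoints of $J$.

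For part (1), we use $\Lip(F)\leqslant N^{1/100}$, which holds throughout the proof. This gives $|F(x)-F(jr_n)|\leqslant N^{1/100}(x-jr_n)$ and $|F(x)-F((j+1)r_n)|\leqslant N^{1/100}((j+1)r_n-x)$. Substituting into the convex combination yields
\[|F(x)-F_n(x)|\leqslant N^{1/100}\bigl[(1-t)\,t r_n+t(1-t) r_n\bigr]=2t(1-t)N^{1/100}r_n\leqslant \tfrac12 N^{1/100}r_n,\]
which is stronger than the claimed bound. A cruder alternative is also available: the convex combination of two quantities each bounded by $N^{1/100}r_n$ is bounded by $N^{1/100}r_n$. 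Taking the supremum over $x$ and over $J$ gives the $L^\infty$ bound on $[0,1]$; since the grid intervals cover $[0,1]$, nothing is lost at the boundary.

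For part (2), the simple curvature condition gives $\sup|\gamma'|\leqslant 1$. Hence $\gamma_\lambda(x)=\gamma(\lambda-x)$ is $1$-Lipschitz, because $\Lambda-[0,1]\subseteq I$ and $\gamma$ is $1$-Lipschitz on all of $\mathbb R$ anyway. The same computation with Lipschitz constant $1$ then yields $\|\gamma_\lambda-\gamma_{\lambda,n}\|_{L^\infty}\leqslant \tfrac12 r_n\leqslant r_n$.

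There is no genuine obstacle here. The only points to check are that the Lipschitz bound $\Lip(F)\leqslant N^{1/100}$ is the one fixed in the setup, and that the dyadic grid $\{jr_n\}$ exactly tiles $[0,1]$, which holds because $r_n=2^{-j_n}$ by the scale separation hypothesis of Theorem \ref{thm:quant curve two}. The curvature (second-derivative) information on $\gamma$ is not needed for this lemma; it would give the sharper bound $O(Mr_n^2)$ for $\gamma_\lambda$, but only the Lipschitz bound is required.
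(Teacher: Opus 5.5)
Your proof is correct and follows the same route as the paper. On each dyadic interval $[jr_n,(j+1)r_n]$, the interpolant value is a convex combination of the endpoint values, so the error is controlled by $\Lip(F)\leqslant N^{1/100}$ and $\sup|\gamma'|\leqslant 1$. The paper uses exactly your ``cruder alternative'', bounding $|F(x)-F_n(x)|$ by the distance from $F(x)$ to one of the two endpoint values; your weighted estimate $2t(1-t)N^{1/100}r_n$ only sharpens the constant to $\tfrac12$.
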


\begin{proof}
    Let $x\in (jr_n,(j+1)r_n)$ for some $j\in\mathbb N$. Note that either
    \[|F(x)-F_n(x)|\leqslant |F(x)-F_n(jr_n)|\ \text{or}\ |F(x)-F_n(x)|\leqslant |F(x)-F_n((j+1)r_n)|.\]
    Without loss of generality, we may assume that $|F(x)-F_n(x)|\leqslant |F(x)-F_n(jr_n)|$. Since $\Lip(F)\leqslant N^{1/100}$,
    \begin{align*}
    |F(x)-F_n(x)|&\leqslant |F(x)-F_n(jr_n)|=|F(x)-F(jr_n)|\\
    &\leqslant N^{1/100}|x-jr_n|\leqslant N^{1/100}r_n,
    \end{align*}
    so $\|F-F_n\|_{L^\infty([0,1])}\leqslant N^{1/100}r_n$. The proof of (2) is similar.
\end{proof}

Since $\Lip(F)\leqslant N^{1/100}$ and $\sup\limits_{t\in \mathbb R}|\gamma'(t)|\leqslant 1$, we have $\Lip(F_n)\leqslant N^{1/100}$ and $\Lip(\gamma_{\lambda,n})\leqslant 1$ for $\lambda\in\{\alpha,\alpha'\}$. Also, since $F_n'$ exists a.e., $\|F_n'\|_{L^\infty([0,1])}\leqslant N^{1/100}$. This gives $\|F_n'\|_{L^2([0,1])}^2\lesssim N^{2/100}$. Similarly, $\|\gamma_{\lambda,n}'\|_{L^2([0,1])}^2\leqslant 1\leqslant N^{2/100}$ for $\lambda\in\{\alpha,\alpha'\}$. Thus,
\begin{equation}\label{eqn:L2 sum}
    \|F_n'\|_{L^2([0,1])}^2+\|\gamma_{\alpha,n}'\|_{L^2([0,1])}^2+\|\gamma_{\alpha',n}'\|_{L^2([0,1])}^2\lesssim N^{2/100}.
\end{equation}

We now show that $\|F_n'\|_{L^2([0,1])}^2,\ \|\gamma_{\alpha,n}'\|_{L^2([0,1])}^2$, and $\|\gamma_{\alpha',n}'\|_{L^2([0,1])}^2$ are increasing functions in $n$. To see this, we first prove $F_{k+1}'-F_k'$, $\gamma_{\alpha,k+1}'-\gamma_{\alpha,k}'$, and $\gamma_{\alpha',k+1}'-\gamma_{\alpha',k}'$ are respectively pairwise orthogonal in the sense of $L^2([0,1])$.
\begin{lem}[Orthogonality]{}\label{lem:orthogonal}
    In $L^2([0,1])$, for every pair $(n,k)\in\{1,\cdots,N-1\}^2$ with $n\ne k$,
    \begin{enumerate}[\rm(1)]
        \item $\langle F_{n+1}'-F_n',F_{k+1}'-F_k'\rangle=0$ and $\langle F_{n+1}'-F_n',F_1'\rangle=0$;
        \item $\langle\gamma_{\lambda,n+1}'-\gamma_{\lambda,n}',\gamma_{\lambda,k+1}'-\gamma_{\lambda,k}'\rangle=0$ and $\langle\gamma_{\lambda,n+1}'-\gamma_{\lambda,n}',\gamma_{\lambda,1}'\rangle=0$ for $\lambda\in\{\alpha,\alpha'\}$.
    \end{enumerate}
\end{lem}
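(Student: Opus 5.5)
The orthogonality follows from a martingale-type property of dyadic piecewise linear interpolation. The key observation is that the scales are dyadic and nested: since $r_{n+1}\leqslant \frac12 r_n$ with both dyadic, every grid point $jr_n$ is a grid point at scale $r_{n+1}$. The same argument will handle $F$ and $\gamma_\lambda$ at once. Let $G$ be any continuous function on $[0,1]$ and let $G_n$ denote its piecewise linear interpolant on the $r_n$-grid. Then $G_n'$ is constant on each interval $I=[jr_n,(j+1)r_n]$, and its value there is the average slope
\[\frac{G((j+1)r_n)-G(jr_n)}{r_n}=\frac{1}{r_n}\int_I G_m'\,dx\quad\text{for every } m\geqslant n.\]
This holds because $G_m$ is absolutely continuous and agrees with $G$ at the endpoints of $I$, which are also $r_m$-grid points. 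Equivalently, $G_n'=\mathbb{E}[G_m'\mid\mathcal{D}_n]$, where $\mathcal{D}_n$ is the $\sigma$-algebra generated by the dyadic intervals of length $r_n$.

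With this in hand, fix $n<k$ without loss of generality. The function $G_{n+1}'-G_n'$ is constant on each interval $J$ of the $r_{k}$-grid, because $r_k\leqslant r_{n+1}$, so $J$ lies inside a single $r_{n+1}$-interval and hence inside a single $r_n$-interval. Therefore
\[\langle G_{n+1}'-G_n',\,G_{k+1}'-G_k'\rangle=\sum_{J}\bigl(G_{n+1}'-G_n'\bigr)\big|_J\int_J\bigl(G_{k+1}'-G_k'\bigr)\,dx,\]
and each inner integral vanishes by the averaging identity applied at scale $k$ with $m=k+1$: the integral over $J$ of $G_{k+1}'$ equals $G(\text{right endpoint})-G(\text{left endpoint})$, which is also the integral of $G_k'$.

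The second claim is the same computation. For $n\geqslant1$, $G_1'$ is constant on each $r_1$-interval, and the integral of $G_{n+1}'-G_n'$ over an $r_1$-interval is zero, since that interval is a union of $r_n$-intervals on each of which the difference integrates to zero.

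To conclude, I would apply this with $G=F$ to get (1) and with $G=\gamma_\lambda$ for $\lambda\in\{\alpha,\alpha'\}$ to get (2). Here $F$ is Lipschitz and $\gamma_\lambda$ is $C^1$, so all interpolants are Lipschitz and the fundamental theorem of calculus applies.

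The only point needing care is the nesting of grids, including the boundary of $[0,1]$. Since $1$ is a multiple of every dyadic $r_n\leqslant1$, the grids cover $[0,1]$ exactly and are nested, and there is no real obstacle.
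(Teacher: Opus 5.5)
Your proposal is correct and is essentially the paper's argument. The paper also takes the coarser-scale difference, notes that it is constant on each dyadic interval of the finer scale, and shows the finer-scale difference integrates to zero there by the fundamental theorem of calculus, because both interpolants agree with the function at the nested grid endpoints. (The paper assumes $n>k$ and uses intervals of length $r_n$; you assume $n<k$ and use intervals of length $r_k$.) Your conditional-expectation framing and your explicit treatment of the $G_1'$ term, which the paper dismisses with ``similarly'', are just a cleaner packaging of the same computation.
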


\begin{proof}
    Without loss of generality, we may assume $n>k$. By the fundamental theorem of calculus,
    \begin{align*}
        \int_{jr_n}^{(j+1)r_n} F_{n+1}'-F_n'\,d\mathcal{L}^1&=F_{n+1}((j+1)r_n)-F_n((j+1)r_n)-F_{n+1}(jr_n)+F_n(jr_n)\\
        &=F((j+1)r_n)-F((j+1)r_n)-F(jr_n)+F(jr_n)\\
        &=0.
    \end{align*}
    Since $n>k$, $r_n\leqslant r_{k+1}<r_k$, and thus $F_{k+1}'-F_k'$ is constant on the dyadic interval of length $r_n$. Therefore,
    \begin{align*}
    \int_0^1 \left(F_{n+1}'-F_n'\right)&\left(F_{k+1}'-F_k'\right) \,d\mathcal{L}^1=\sum_{j\in\mathbb N:jr_n\in [0,1]}\int_{jr_n}^{(j+1)r_n} \left(F_{n+1}'-F_n'\right)\left(F_{k+1}'-F_k'\right) \,d\mathcal{L}^1=0.\end{align*}
  Similarly, $\langle F_{n+1}'-F_n',F_1'\rangle=0$. The proof of (2) follows.
\end{proof}

\begin{cor}[Increasing Functions]{}\label{cor:increasing functions}
    $\|F_n'\|_{L^2([0,1])}^2,\ \|\gamma_{\alpha,n}'\|_{L^2([0,1])}^2$, and $\|\gamma_{\alpha',n}'\|_{L^2([0,1])}^2$ are increasing functions of $n$.
\end{cor}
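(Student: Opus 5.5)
We will deduce the corollary from Lemma \ref{lem:orthogonal} by a telescoping decomposition and the Pythagorean identity in $L^2([0,1])$. Write, for $n\in\{1,\dots,N\}$,
\[
F_n'=F_1'+\sum_{k=1}^{n-1}\left(F_{k+1}'-F_k'\right).
\]
By Lemma \ref{lem:orthogonal}(1), the summands $F_1',\,F_2'-F_1',\,\dots,\,F_n'-F_{n-1}'$ are pairwise orthogonal in $L^2([0,1])$. All of these functions are bounded, since $F_n$ is piecewise linear with $\Lip(F_n)\leqslant N^{1/100}$, so they lie in $L^2([0,1])$. The Pythagorean theorem then gives
\[
\|F_n'\|_{L^2([0,1])}^2=\|F_1'\|_{L^2([0,1])}^2+\sum_{k=1}^{n-1}\|F_{k+1}'-F_k'\|_{L^2([0,1])}^2 .
\]
Consequently
\[
\|F_{n+1}'\|_{L^2([0,1])}^2-\|F_n'\|_{L^2([0,1])}^2=\|F_{n+1}'-F_n'\|_{L^2([0,1])}^2\geqslant 0,
\]
so $n\mapsto\|F_n'\|^2_{L^2([0,1])}$ is (weakly) increasing.

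The same argument applies verbatim to $\gamma_{\lambda,n}'$ for $\lambda\in\{\alpha,\alpha'\}$, using Lemma \ref{lem:orthogonal}(2) in place of (1). The only points to check are that the orthogonality relations cover every pair in the decomposition, and that each term lies in $L^2([0,1])$. Both are immediate: Lemma \ref{lem:orthogonal} handles pairs of distinct increments as well as an increment paired with the base term $F_1'$ (respectively $\gamma_{\lambda,1}'$), and each term is bounded.

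There is no real obstacle here. The one point worth recording is that the argument yields the exact increment identity above, not just monotonicity. That identity is what will be combined with the uniform bound \eqref{eqn:L2 sum} and Lemma \ref{pigeon}: it shows that $\sum_n\|F_{n+1}'-F_n'\|^2_{L^2([0,1])}\lesssim N^{2/100}$, and likewise for the $\gamma_{\lambda,n}'$, which is what allows a scale to be selected at which the piecewise linear approximations are nearly stable.
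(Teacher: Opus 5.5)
Your proof is correct and matches the paper's argument: you telescope $F_n'=F_1'+\sum_{k=1}^{n-1}(F_{k+1}'-F_k')$, then use Lemma \ref{lem:orthogonal} and the Pythagorean identity to get $\|F_n'\|_{L^2([0,1])}^2=\|F_1'\|_{L^2([0,1])}^2+\sum_{k=1}^{n-1}\|F_{k+1}'-F_k'\|_{L^2([0,1])}^2$, and handle $\gamma_{\lambda,n}'$ the same way. Your increment identity $\|F_{n+1}'\|_{L^2([0,1])}^2-\|F_n'\|_{L^2([0,1])}^2=\|F_{n+1}'-F_n'\|_{L^2([0,1])}^2$ is precisely the fact the paper uses, together with orthogonality, in the quantitative Rademacher step.
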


\begin{proof}
    Since $F_n'=F_1'+\sum\limits_{k=1}^{n-1} (F_{k+1}'-F_k')$, by Lemma \ref{lem:orthogonal},
    \[\|F_n'\|_{L^2([0,1])}^2=\|F_1'\|_{L^2([0,1])}^2+\sum\limits_{k=1}^{n-1} \|F_{k+1}'-F_k'\|_{L^2([0,1])}^2,\]
    so $\|F_n'\|_{L^2([0,1])}^2$ is an increasing function of $n$. Similarly, $\|\gamma_{\lambda,n}'\|_{L^2([0,1])}^2=\|\gamma_{\lambda,1}'\|_{L^2([0,1])}^2+\sum\limits_{k=1}^{n-1} \|\gamma_{\lambda,k+1}'-\gamma_{\lambda,k}'\|_{L^2([0,1])}^2$ for $\lambda\in\{\alpha,\alpha'\}$.
\end{proof}


Now we are ready to state the Quantitative Rademacher's Differentiation Theorem.
\begin{prop}[Quantitative Rademacher's Differentiation Theorem]{}
    There exists an $n_1\in [n_0-0.9N^{-3/100}N,n_0+0.9N^{-3/100}N]\cap\mathbb N$ such that
    \begin{equation}\label{eqn:L2 derivative diff}
    \left\|\Delta F'\right\|_{L^2([0,1])}^2+\left\|\Delta \gamma_{\alpha}'\right\|_{L^2([0,1])}^2+\left\|\Delta \gamma_{\alpha'}'\right\|_{L^2([0,1])}^2\lesssim N^{-4/100},
    \end{equation}
    where $\Delta F'=F_{n_1+N^{-9/100}N}'-F_{n_1-N^{-9/100}N}'$, $\Delta \gamma_{\alpha}'=\gamma_{\alpha,n_1+N^{-9/100}N}'-\gamma_{\alpha,n_1-N^{-9/100}N}'$, and $\Delta \gamma_{\alpha'}'=\gamma_{\alpha',n_1+N^{-9/100}N}'-\gamma_{\alpha',n_1-N^{-9/100}N}'$.
\end{prop}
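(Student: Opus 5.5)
The plan is to run the pigeonhole principle on the single increasing sequence of ``energies''
\[a_n:=\|F_n'\|_{L^2([0,1])}^2+\|\gamma_{\alpha,n}'\|_{L^2([0,1])}^2+\|\gamma_{\alpha',n}'\|_{L^2([0,1])}^2,\qquad 1\leqslant n\leqslant N.\]
By Corollary \ref{cor:increasing functions}, $a_n$ is nondecreasing in $n$. By \eqref{eqn:L2 sum}, $0\leqslant a_n\lesssim N^{2/100}$ for every $n$.

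\textbf{Step 1 (Pythagorean identity).} The key identity is: for $m>n$,
\[\|F_m'-F_n'\|_{L^2([0,1])}^2=\|F_m'\|_{L^2([0,1])}^2-\|F_n'\|_{L^2([0,1])}^2,\]
and the analogous identities hold for $\gamma_{\alpha,\cdot}$ and $\gamma_{\alpha',\cdot}$. To prove it, write
\[F_m'-F_n'=\sum_{k=n}^{m-1}(F_{k+1}'-F_k'),\qquad F_n'=F_1'+\sum_{k=1}^{n-1}(F_{k+1}'-F_k').\]
Lemma \ref{lem:orthogonal} says that all the increments $F_{k+1}'-F_k'$ are mutually orthogonal and orthogonal to $F_1'$. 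Hence $F_m'-F_n'$ is orthogonal to $F_n'$, and expanding $\|F_m'\|^2=\|F_n'+(F_m'-F_n')\|^2$ gives the identity. Summing the three identities shows that, with $\Delta F',\Delta\gamma_\alpha',\Delta\gamma_{\alpha'}'$ formed from the indices $m>n$,
\[\|\Delta F'\|_{L^2([0,1])}^2+\|\Delta \gamma_{\alpha}'\|_{L^2([0,1])}^2+\|\Delta \gamma_{\alpha'}'\|_{L^2([0,1])}^2=a_m-a_n.\]

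\textbf{Step 2 (disjoint windows).} Set $K:=N^{-9/100}N=N^{91/100}$. This is an integer, since we assumed $N^{1/100}\in\mathbb N$. Choose centers $c_i:=n_0-0.9N^{97/100}+2Ki$ for $i=0,1,\dots$, as long as $c_i\leqslant n_0+0.9N^{97/100}$ (after rounding, if necessary, to land in $\mathbb N$). The number of such centers is
\[\gtrsim \frac{1.8N^{97/100}}{2K}=0.9\,N^{6/100}.\]
The windows $[c_i-K,c_i+K]$ have disjoint interiors. Every index involved lies within $N^{97/100}+K$ of $n_0\in[0.1N,0.9N]$, so for $N$ large all indices stay in $\{1,\dots,N\}$.

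\textbf{Step 3 (pigeonhole).} Since $a_n$ is monotone and the windows are interior-disjoint, the increments telescope:
\[\sum_i \bigl(a_{c_i+K}-a_{c_i-K}\bigr)\leqslant a_N-a_1\lesssim N^{2/100}.\]
Some $i$ therefore satisfies
\[a_{c_i+K}-a_{c_i-K}\lesssim \frac{N^{2/100}}{N^{6/100}}=N^{-4/100}.\]
Taking $n_1:=c_i$ and applying Step 1 gives \eqref{eqn:L2 derivative diff}. Alternatively, one could invoke Lemma \ref{pigeon} on a suitable nested family, but the direct count above is cleaner.

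The only delicate points are Step 1, which relies entirely on the orthogonality in Lemma \ref{lem:orthogonal}, and keeping the index bookkeeping (integrality and staying in range) consistent. I expect no real obstacle beyond these.
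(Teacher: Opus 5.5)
Your proposal is correct and follows the paper's route. Like the paper, you pigeonhole the monotone combined energy $\|F_n'\|^2+\|\gamma_{\alpha,n}'\|^2+\|\gamma_{\alpha',n}'\|^2\lesssim N^{2/100}$ over $\sim N^{6/100}$ windows of width $2N^{91/100}$ inside $[n_0-0.9N^{97/100},n_0+0.9N^{97/100}]$, then use Lemma \ref{lem:orthogonal} to convert the difference of squared norms into $\|\Delta F'\|^2+\|\Delta\gamma_\alpha'\|^2+\|\Delta\gamma_{\alpha'}'\|^2$; the only difference is that you write out the disjoint-window count where the paper just says ``by the pigeonhole principle.''
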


\begin{proof}
    Since we assumed $N^{1/100}\geqslant 3$, $n\pm N^{-9/100}N\in\{1,2,\cdots,N\}$ for every $n\in [n_0-0.9N^{-3/100}N,n_0+0.9N^{-3/100}N]\cap\mathbb N$. Then by the pigeonhole principle, Corollary \ref{cor:increasing functions}, and (\ref{eqn:L2 sum}), there exists $n_1\in [n_0-0.9N^{-3/100}N,n_0+0.9N^{-3/100}N]$ such that
\begin{align*}
   & \left\|F_{n_1+N^{-9/100}N}'\right\|_{L^2([0,1])}^2-\left\|F_{n_1-N^{-9/100}N}'\right\|_{L^2([0,1])}^2\hspace*{15mm}\\
    &+\left\|\gamma_{\alpha,n_1+N^{-9/100}N}'\right\|_{L^2([0,1])}^2-\left\|\gamma_{\alpha,n_1-N^{-9/100}N}'\right\|_{L^2([0,1])}^2\\
    &+\left\|\gamma_{\alpha',n_1+N^{-9/100}N}'\right\|_{L^2([0,1])}^2-\left\|\gamma_{\alpha',n_1-N^{-9/100}N}'\right\|_{L^2([0,1])}^2\\
    &\lesssim N^{2/100}(N^{-3/100}N)^{-1}N^{-9/100}N\\
    &=N^{-4/100}.
    \end{align*}
    By Lemma \ref{lem:orthogonal}, $\langle \Delta F',F_{n_1-N^{-9/100}N}'\rangle=\langle \Delta \gamma_{\lambda}',\gamma_{\lambda,n_1-N^{-9/100}N}'\rangle=0$ for $\lambda\in\{\alpha,\alpha'\}$, so we have (\ref{eqn:L2 derivative diff}).
\end{proof}

Informally, this shows that $F$ has good approximate differentiability between the coarse scale $n_1 - N^{-9/100}N$ and the finer scale $n_1 + N^{-9/100}N$. This serves as a quantitative version of Rademacher's differentiation theorem. We also impose quantitative control on $\gamma_{\alpha}$ and $\gamma_{\alpha'}$ so that they interact well with the behavior of $F$.

In the rest of paper, we abbreviate
\[n_0^{\pm}=n_0\pm N^{-3/100}N\ \text{and}\  n_1^{\pm}=n_1\pm N^{-9/100}N.\]

\subsection{Rising Sun Lemma}\label{sec:rising sun}
In this section, we highlight the importance of the Rising Sun Lemma. Roughly speaking, the Rising Sun Lemma serves as a substitute for the intermediate value theorem. It allows us to track the cumulative growth and decay of the map $\lambda\mapsto\Phi_\lambda(x,F(x))$, for $\lambda\in\{\alpha,\alpha'\}$, on suitably chosen dyadic intervals where the functions $A$, $F$, and $\gamma$ are well behaved. The key point is that, on each such interval, the linear approximation has well-controlled slope. The Rising Sun Lemma then guarantees that either only few intervals contribute to a given level set, or a positive proportion of the contributing intervals exhibit unfavorable behavior.

We first fix a dyadic interval $I_0\subseteq A_{n_0^-}$ with length $r_{n_1^-}$ such that the following two inequalities hold:
\begin{equation}\label{eqn:Delta A inter I0}
    \mathcal{L}^1\left(\Delta A\cap I_0\right)\lesssim N^{-2/100}\mathcal{L}^1(I_0),
\end{equation}
and
\begin{equation}\label{eqn:L2 sum on I0}
    \int_{I_0} \left|F_{n_1^+}'-F_{n_1^-}'\right|^2+\left|\gamma_{\alpha,n_1^+}'-\gamma_{\alpha,n_1^-}'\right|^2+\left|\gamma_{\alpha',n_1^+}'-\gamma_{\alpha',n_1^-}'\right|^2\,d\mathcal{L}^1\lesssim N^{-3/100}\mathcal{L}^1\left(I_0\right).
\end{equation}
We postpone the process to find such $I_0$ until Section \ref{sec:contradiction}.

Write $I_0=[a,b]$. Note that $F_{n_1^-}'$ is constant on $I_0$. Let $c=F_{n_1^-}'$ on $I_0$. Since $\gamma'$ is injective, from the hypotheses on $\alpha,\alpha'$, we have either 
\[|c-\gamma'(\alpha-a)|\gtrsim 1\text{ or }|c-\gamma'(\alpha'-a)|\gtrsim 1.\] Without loss of generality, we may assume $|c-\gamma'(\alpha-a)|\gtrsim 1$. Let
\begin{equation}\label{eqn:fix point derivative}
    t=\nabla\Phi_\alpha(a,F(a))\cdot (1,c)=c-\gamma'(\alpha-a)
\end{equation}

\subsubsection{The case of $t>0$}
Suppose $t>0$ and partition $I_0$ into $S=r_{n_1^-}/r_{n_1^+}$ dyadic intervals $J_1,J_2,\cdots,J_S$ of length $r_{n_1^+}$. Also, partition $\mathbb R$ into disjoint intervals $\{K_m:m\in\mathbb Z\}$ of length $100N^{1/100}r_{n_1^+}$.

\begin{dfn}[``Reach'' Definition 1]
    We say that a dyadic interval $J_j$ \textit{reaches} an interval $K_m$ if there exists at least one $x\in J_j$ such that $F_{n_1^+}(x)+\gamma_{\alpha,n_1^+}(x)\in K_m$.
\end{dfn}

\begin{lem}[$J_j$'s Maximal Reach Number]\label{lem:maximal number reach}
    For each $J_j$, there exists a unique $K_m$ such that
    \begin{enumerate}[\rm(1)]
        \item either $J_j$ only reaches the interval $K_m$, or
        \item $J_j$ reaches both intervals $K_m$ and $K_{m+1}$.
    \end{enumerate}
\end{lem}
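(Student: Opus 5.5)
The plan is to show that the image $G_j:=\{F_{n_1^+}(x)+\gamma_{\alpha,n_1^+}(x):x\in J_j\}$ is a short interval, and then use a length comparison.

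\emph{Step 1: $G_j$ is an interval.} On $J_j$ both $F_{n_1^+}$ and $\gamma_{\alpha,n_1^+}$ are linear, since $J_j$ is a dyadic interval of length $r_{n_1^+}$ and both functions are piecewise linear on exactly this grid. Hence their sum $h:=F_{n_1^+}+\gamma_{\alpha,n_1^+}$ is linear on $J_j$. Continuity alone would also suffice, via the intermediate value theorem. So $G_j$ is a closed interval.

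\emph{Step 2: length bound.} The Lipschitz bounds recorded before \eqref{eqn:L2 sum} give $\Lip(F_{n_1^+})\leqslant N^{1/100}$ and $\Lip(\gamma_{\alpha,n_1^+})\leqslant 1$. Therefore
\[\mathcal{L}^1(G_j)\leqslant (N^{1/100}+1)\,r_{n_1^+}<100N^{1/100}r_{n_1^+}=\mathcal{L}^1(K_m).\]

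\emph{Step 3: counting the intervals met.} The $K_m$ partition $\mathbb R$ into consecutive intervals of common length $L=100N^{1/100}r_{n_1^+}$. An interval of length strictly less than $L$ cannot contain a whole $K_{m+1}$ together with points of $K_m$ and of $K_{m+2}$. Hence $G_j$ meets either exactly one $K_m$ or exactly two consecutive ones, $K_m$ and $K_{m+1}$. In both cases we take $m$ to be the smallest index with $K_m\cap G_j\neq\varnothing$, i.e.\ the $K_m$ containing $\min G_j$. This choice is unique, which gives alternatives (1) and (2).

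The only point needing care is the endpoint convention for the partition (half-open intervals $[mL,(m+1)L)$, say). This ensures that "reaches" is well defined and that an interval of length $<L$ cannot straddle three cells. I expect no real obstacle here; the lemma is essentially a pigeonhole/length observation.
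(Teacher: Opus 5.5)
Your proposal is correct and follows the paper's argument: the paper's proof is exactly your Step 2, which uses the triangle inequality with $\Lip(F_{n_1^+})\leqslant N^{1/100}$ and $\Lip(\gamma_{\alpha,n_1^+})\leqslant 1$ to bound the oscillation of $F_{n_1^+}+\gamma_{\alpha,n_1^+}$ on $J_j$ by $100N^{1/100}r_{n_1^+}$. The paper leaves implicit the two steps you spell out, namely that the image of $J_j$ is an interval and that an interval of at most this length meets at most two consecutive cells $K_m$.
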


\begin{proof}
    Let $x_1,x_2\in J_j$. Then
    \begin{align*}
    \bigl|F_{n_1^+}(x_1)+\gamma_{\alpha,n_1^+}(x_1)&-F_{n_1^+}(x_2)-\gamma_{\alpha,n_1^+}(x_2)\bigr|\leqslant \bigl|F_{n_1^+}(x_1)-F_{n_1^+}(x_2)\bigr|+\bigl|\gamma_{\alpha,n_1^+}(x_1)-\gamma_{\alpha,n_1^+}(x_2)\bigr|.
    \end{align*}
    Since $\Lip(F_{n_1^+})\leqslant N^{1/100}$, $\Lip(\gamma_{\alpha,n_1^+})\leqslant 1$, and $|x_1-x_2|\leqslant\mathcal{L}^1(J_j)=r_{n_1^+}$, we have
      \begin{align*}
      \left|F_{n_1^+}(x_1)+\gamma_{\alpha,n_1^+}(x_1)-F_{n_1^+}(x_2)-\gamma_{\alpha,n_1^+}(x_2)\right|
      &\leqslant(N^{1/100}+1)|x_1-x_2|\\&\leqslant100N^{1/100}r_{n_1^+}.\qedhere
      \end{align*}
\end{proof}

\begin{dfn}[Good and Bad Interval Definition 1]
    A dyadic interval $J_j$ is said to be \textit{good} if
    \[\left|  F'_{n_1^+}(x)+\gamma'_{\alpha,n_1^+}(x)-t\right|<\frac{t}{100},\quad\text{for all}\;\;x\in J_j,\]
    and \textit{bad} otherwise. Since $F_{n_1^+}'+\gamma_{\alpha,n_1^+}'$ is constant on $J_j$, we will drop the variable $x$ in practice.
\end{dfn}

Next, we show that there are few bad intervals.

\begin{lem}[Upper Bound for the Number of Bad Intervals]\label{lem:upper bound on bad 1}
    At most $O(N^{-3/100}S)$ of the intervals $J_j$ are bad.
\end{lem}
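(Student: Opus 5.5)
The plan is a Chebyshev/Markov argument on $I_0$ using the $L^2$ bound \eqref{eqn:L2 sum on I0}. On each $J_j$ the function $G:=F_{n_1^+}'+\gamma_{\alpha,n_1^+}'$ is constant, and on all of $I_0$ the coarse-scale function $F_{n_1^-}'+\gamma_{\alpha,n_1^-}'$ is constant. Since $F_{n_1^-}'=c$ on $I_0$, it suffices to show that $|G-t|<t/100$ off a set of measure $O(N^{-3/100})\mathcal{L}^1(I_0)$. Each bad $J_j$ then contributes $r_{n_1^+}=\mathcal{L}^1(I_0)/S$ to that exceptional set, which gives the count $O(N^{-3/100}S)$.

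\textbf{Splitting the deviation.} For $x\in I_0$ I write
\[
G(x)-t=\bigl(F_{n_1^+}'-F_{n_1^-}'\bigr)(x)+\bigl(\gamma_{\alpha,n_1^+}'-\gamma_{\alpha,n_1^-}'\bigr)(x)+\bigl(\gamma_{\alpha,n_1^-}'(x)-(\gamma'_\alpha\text{-value matching }t)\bigr).
\]
The first two terms are controlled in $L^2(I_0)$ by \eqref{eqn:L2 sum on I0}.

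The third term is deterministic. Note that $\gamma_\alpha'(x)=-\gamma'(\alpha-x)$, so $c+\gamma_\alpha'(a)=c-\gamma'(\alpha-a)=t$, consistent with \eqref{eqn:fix point derivative}. The function $\gamma_{\alpha,n_1^-}'$ on $I_0$ is the difference quotient of $\gamma_\alpha$ over $I_0$. Since $\gamma'$ is $M$-Lipschitz, this quotient differs from $\gamma_\alpha'(a)$ by at most $M r_{n_1^-}$. We have $r_{n_1^-}\le 2^{-(n_1^--1)}\le 2^{-0.05N}$, because $n_1\ge 0.1N-O(N^{0.97})$. This is far smaller than $t/1000$, since $t\gtrsim 1$ by the choice of $\alpha$ (injectivity of $\gamma'$ together with $\alpha\neq\alpha'$ gives a lower bound depending only on $|\alpha-\alpha'|$ and $M$). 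Here $N$ is assumed large, as in the setup.

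\textbf{Markov step.} Set $B=\{x\in I_0:|G(x)-t|\ge t/100\}$. On $B$, the sum of the first two terms has modulus at least $t/200$. Using $t\gtrsim 1$ and $(u+v)^2\le 2(u^2+v^2)$, Chebyshev together with \eqref{eqn:L2 sum on I0} gives
\[
\mathcal{L}^1(B)\lesssim t^{-2}N^{-3/100}\mathcal{L}^1(I_0)\lesssim N^{-3/100}\mathcal{L}^1(I_0).
\]
Because $G$ is constant on each $J_j$, $B$ is exactly the union of the bad $J_j$. Dividing by $r_{n_1^+}$ gives at most $O(N^{-3/100}S)$ bad intervals.

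\textbf{Main obstacle.} The delicate point is bookkeeping in the constant term. One must check that the sign conventions for $\gamma_\alpha$ and $\nabla\Phi_\alpha$ make $t$ coincide exactly with $c+\gamma_{\alpha}'(a)$. One must also check that the implicit constant in $t\gtrsim1$ depends only on $M$ and $|\alpha-\alpha'|$, so that it can be absorbed into $\lesssim$. Everything else is Chebyshev.
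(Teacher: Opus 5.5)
Your proposal is correct and follows essentially the same route as the paper. You apply Chebyshev on $I_0$ after splitting $F_{n_1^+}'+\gamma_{\alpha,n_1^+}'-t$ into the two scale-difference terms controlled by \eqref{eqn:L2 sum on I0} and the deterministic term $\gamma_{\alpha,n_1^-}'+\gamma'(\alpha-a)=O(Mr_{n_1^-})$; the only cosmetic difference is that you absorb this last term pointwise by lowering the threshold to $t/200$, whereas the paper keeps it inside the $L^2$ integral via $2^{-N/10}M^2\le N^{-3/100}$, and both versions rely on $t\gtrsim 1$ with constants depending on $M$ and $|\alpha-\alpha'|$.
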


\begin{proof}
    We prove that
    \[\mathcal{L}^1\left(\left\{x\in I_0:\left|F_{n_1^+}'+\gamma_{\alpha,n_1^+}'-t\right|\geqslant \frac{t}{100}\right\}\right)\lesssim N^{-3/100}\mathcal{L}^1(I_0).\]
    By the Chebyshev's inequality,
    \[\mathcal{L}^1\left(\left\{x\in I_0:\left|F_{n_1^+}'+\gamma_{\alpha,n_1^+}'-t\right|\geqslant  \frac{t}{100}\right\}\right)\leqslant \frac{1}{(t/100)^2}\int_{I_0} \left|F_{n_1^+}'-F_{n_1^-}'+\gamma_{\alpha,n_1^+}'+\gamma'(\alpha-a)\right|^2\,d\mathcal{L}^1.\]
    By the triangle inequality, we further decompose
    \begin{align*}
        &\int_{I_0} \left|F_{n_1^+}'-F_{n_1^-}'+\gamma_{\alpha,n_1^+}'+\gamma'(\alpha-a)\right|^2\,d\mathcal{L}^1\\&      
        \lesssim \int_{I_0} \left|F_{n_1^+}'-F_{n_1^-}'\right|^2\,d\mathcal{L}^1+\hspace{-.5mm}\int_{I_0} \left|\gamma_{\alpha,n_1^+}'-\gamma_{\alpha,n_1^-}'\right|^2 \,d\mathcal{L}^1+\hspace{-.5mm}\int_{I_0} \left|\gamma_{\alpha,n_1^-}'+\gamma'(\alpha-a)\right|^2 \,d\mathcal{L}^1.
    \end{align*}
    By (\ref{eqn:L2 sum on I0}), we have
    \begin{align*}
        \int_{I_0} \left|F_{n_1^+}'-F_{n_1^-}'\right|^2\,d\mathcal{L}^1+\int_{I_0} \left|\gamma_{\alpha,n_1^+}'-\gamma_{\alpha,n_1^-}'\right|^2\,d\mathcal{L}^1\lesssim N^{-3/100}\mathcal{L}^1(I_0).
    \end{align*}
    Thus, it suffices to prove
    \[\int_{I_0} \left|\gamma_{\alpha,n_1^-}'+\gamma'(\alpha-a)\right|^2\,d\mathcal{L}^1 \lesssim N^{-3/100}\mathcal{L}^1(I_0).\]
    Note that $\gamma_{\alpha,n_1^-}'$ is constant on $I_0$, so it remains to show that
    \[\left|\gamma_{\alpha,n_1^-}'+\gamma'(\alpha-a)\right|^2\lesssim N^{-3/100}.\]

    First, we require a decay bound at the scale $n_1 - N^{-9/100}N$. As $N^{1/100}\geqslant 3$,
    \begin{align*}
        n_1^-&=n_1-N^{-9/100}N\\
        &\geqslant  n_0-0.9N^{-3/100}N-N^{-9/100}N\\
        &\geqslant  0.1N(1-9N^{-3/100}-10N^{-9/100})\\
        &\geqslant\frac{1}{20}N.
    \end{align*}
    Also, since $r_{n+1}\leqslant \frac{1}{2}r_n$,
    \[\mathcal{L}^1(I_0)=r_{n_1^-}\leqslant 2^{-(n_1-N^{-9/100}N)}\leqslant 2^{-\frac{1}{20}N}.\]
    By the fundamental theorem of calculus,
    \[\left|\gamma_{\alpha,n_1^-}'+\gamma'(\alpha-a)\right|^2=\left|\frac{\gamma(\alpha-b)-\gamma(\alpha-a)}{\mathcal{L}^1(I_0)}+\gamma'(\alpha-a)\right|^2=\left|\frac{1}{\mathcal{L}^1(I_0)}\int_{I_0}\gamma'(\alpha-x)-\gamma'(\alpha-a)\,d\mathcal{L}^1(x)\right|^2.\]
    Since $\gamma'$ is $M$-bi-Lipschitz, so
    \begin{align*}
        \left|\frac{1}{\mathcal{L}^1(I_0)}\int_{I_0}\gamma'(\alpha-x)-\gamma'(\alpha-a)\,d\mathcal{L}^1(x)\right|^2
        &\leqslant \left(\frac{1}{\mathcal{L}^1(I_0)}\int_{I_0}M(x-a)\,d\mathcal{L}^1(x)\right)^2\\
        &=2^{-2}M^2\mathcal{L}^1(I_0)^2\\
        &=2^{-2}M^2r_{n_1^-}^2\\
        &\leqslant 2^{-2(n_1-N^{-9/100}N+1)}M^2\\
        &\leqslant 2^{-\frac{1}{10}N} M^2.
    \end{align*}
    Since we assume that $N$ is sufficiently large, we can further assume that
    \[2^{-\frac{1}{10}N}M^2\leqslant N^{-3/100}.\]
    This gives that
    \[\int_{I_0} \left|\gamma_{\alpha,n_1^-}'+\gamma'(\alpha-a)\right|^2\,d\mathcal{L}^1\lesssim N^{-3/100}\mathcal{L}^1(I_0).\]

    Therefore,
    \[\mathcal{L}^1\left(\left\{x\in I_0:\left|F_{n_1^+}'+\gamma_{\alpha,n_1^+}'-t\right|\geqslant  \frac{t}{100}\right\}\right)\lesssim N^{-3/100}\mathcal{L}^1(I_0).\]
    Note that the left hand side is $r_{n_1^+}(\#\ \text{of bad $J_j$'s})$, while the right hand side is $N^{-3/100}r_{n_1^+}S$.
    This implies that at most $O(N^{-3/100}S)$ of the intervals $J_1,J_2,\cdots,J_S$ are bad.
\end{proof}

Before stating the Rising Sun Lemma, we need one final lemma to compare the length of $J_j$ to its image, under the assumption that $J_j$	is either good or bad.

\begin{lem}[Curve projection is increasing on good interval]\label{lem:increase on good interval}
    Fix $J_j$.
    \begin{enumerate}[\rm(1)]
        \item If $J_j$ be a good interval, the function $x\mapsto F_{n_1^+}(x)+\gamma_{\alpha,n_1^+}(x)$ is increasing on $J_j$. Moreover, its slope is comparable to $t$.
        \item If $J_j$ is a bad interval, the function $x\mapsto F_{n_1^+}(x)+\gamma_{\alpha,n_1^+}(x)$ is still linear, but the slope can have either sign and, by the proof of Lemma \ref{lem:maximal number reach}. has magnitude $O(N^{1/100}).$
    \end{enumerate}
\end{lem}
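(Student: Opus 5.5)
The plan is to use only the piecewise linear structure on the fine dyadic grid together with the definition of good intervals. Since $J_j$ is a dyadic interval of length $r_{n_1^+}$, and both $F_{n_1^+}$ and $\gamma_{\alpha,n_1^+}$ are by construction linear on each dyadic interval of the grid at scale $r_{n_1^+}$, their sum $x\mapsto F_{n_1^+}(x)+\gamma_{\alpha,n_1^+}(x)$ is affine on $J_j$. Its slope is the constant value $s_j:=F_{n_1^+}'+\gamma_{\alpha,n_1^+}'$ on the interior of $J_j$, and the function is continuous at the endpoints. I would record this first, since it is the common starting point for both parts.

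For part (1), I would unpack the definition of a good interval: $|s_j-t|<t/100$. Because we are in the case $t>0$, this gives
\[\tfrac{99}{100}t<s_j<\tfrac{101}{100}t.\]
In particular $s_j>0$, so the affine function is strictly increasing on $J_j$, and $s_j\sim t$ with absolute constants. Consequently the image of $J_j$ is an interval of length $s_j r_{n_1^+}$, comparable to $t\,r_{n_1^+}$. It is worth also noting that $t\gtrsim 1$ by the choice made before \eqref{eqn:fix point derivative}. Then $t\le |c|+|\gamma'(\alpha-a)|\le N^{1/100}+1$, so this image length lies between a constant multiple of $r_{n_1^+}$ and the width $100N^{1/100}r_{n_1^+}$ of the $K_m$. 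This will be used later when counting reaches in the Rising Sun argument.

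For part (2), nothing about the sign of $s_j$ can be said, since badness only says $|s_j-t|\ge t/100$. For the magnitude, I would use $\Lip(F_{n_1^+})\le N^{1/100}$ and $\Lip(\gamma_{\alpha,n_1^+})\le 1$, both recorded before \eqref{eqn:L2 sum}. These give $|s_j|\le N^{1/100}+1=O(N^{1/100})$, which is exactly the estimate in the proof of Lemma \ref{lem:maximal number reach}.

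There is no real obstacle here. The only point requiring care is confirming that both piecewise linear approximants have their breakpoints on the grid of $r_{n_1^+}$, which is the grid containing $J_j$. This holds because both were defined as interpolants on that very dyadic grid, so the slope is genuinely constant on $J_j$.
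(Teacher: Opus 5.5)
Your proposal is correct and matches the paper's argument. The paper also observes that the slope on $J_j$ is the constant $F_{n_1^+}'+\gamma_{\alpha,n_1^+}'$, and that the good-interval condition traps it between $\tfrac{99}{100}t$ and $\tfrac{101}{100}t$. For part (2), your bound $|s_j|\le N^{1/100}+1$ from $\Lip(F_{n_1^+})\le N^{1/100}$ and $\Lip(\gamma_{\alpha,n_1^+})\le 1$ is exactly the estimate the paper defers to the proof of Lemma \ref{lem:maximal number reach}.
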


\begin{proof}
    The derivative of the function $x\mapsto F_{n_1^+}(x)+\gamma_{\alpha,n_1^+}(x)$ is $F_{n_1^+}'+\gamma_{\alpha,n_1^+}'$.
    Since $J_j$ is a good interval, $\left|F_{n_1^+}'+\gamma'_{\alpha,n_1^+}-t\right|<\frac{t}{100}$ on $J_j$. This gives the two following bounds:
    \begin{align*}
        F_{n_1^+}'+\gamma_{\alpha,n_1^+}'=F_{n_1^+}'+\gamma_{\alpha,n_1^+}'-t+t\geqslant -\frac{t}{100}+t\sim t,
    \end{align*}
    and
    \[F_{n_1^+}'+\gamma_{\alpha,n_1^+}'=F_{n_1^+}'+\gamma_{\alpha,n_1^+}'-t+t\leqslant \frac{t}{100}+t\sim t.\qedhere\]
\end{proof}

Now we state the Rising Sun Lemma.
\begin{lem}[Rising Sun Lemma 1]\label{rising sun 1}
    Fix $K_m$. Then at least one of the following statements is true:
    \begin{enumerate}[\rm(1)]
        \item There are at most $O(N^{1/100})$ intervals $J_j$ that reach $K_m$.
        \item Of all the intervals $J_j$ that reach $K_m$, the proportion of those $J_j$ that are bad is $\gtrsim N^{-1/100}$.
    \end{enumerate}
\end{lem}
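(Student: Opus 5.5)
The plan is to set $g:=F_{n_1^+}+\gamma_{\alpha,n_1^+}$ on $I_0=J_1\cup\dots\cup J_S$. This function is continuous and piecewise linear. On each good $J_j$ it is increasing with slope in $[\tfrac{99}{100}t,\tfrac{101}{100}t]$ by Lemma \ref{lem:increase on good interval}. On each bad $J_j$ its slope is $O(N^{1/100})$ in absolute value. Fix $K_m$ and let $\mathcal J$ be the set of intervals $J_j$ that reach $K_m$. Assume (2) fails, so at most $cN^{-1/100}\#\mathcal J$ of the intervals in $\mathcal J$ are bad for a small absolute constant $c$. We will show $\#\mathcal J=O(N^{1/100})$, which is (1).

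The key steps, in order, are as follows. First, order the intervals in $\mathcal J$ from left to right as $J_{j_1},\dots,J_{j_p}$ and record $g(J_{j_k})\cap K_m\neq\varnothing$. Second, track the net change of $g$ from $J_{j_1}$ to $J_{j_p}$. Over the full stretch $J_{j_1}\cup\dots\cup J_{j_p}$, the value of $g$ stays within $\mathcal{N}_{100N^{1/100}r_{n_1^+}}(K_m)$ at the reaching intervals. The total displacement $g(\text{right end of }J_{j_p})-g(\text{left end of }J_{j_1})$ is therefore at most $O(N^{1/100}r_{n_1^+})$, using $|K_m|=100N^{1/100}r_{n_1^+}$ and Lemma \ref{lem:maximal number reach}. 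On the other hand, this displacement equals the sum of the increments over all $J_j$ between them, including those not in $\mathcal J$.

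The obstacle is that intervals not in $\mathcal J$ lying between reaching intervals may be good or bad, and they are not controlled by the hypothesis on $\mathcal J$. Here the rising sun idea enters. Every good interval contributes $\gtrsim t r_{n_1^+}$ of increase, and only bad intervals can decrease $g$. Between consecutive reaching intervals $J_{j_k}$ and $J_{j_{k+1}}$, $g$ leaves $K_m$ and returns to it, or stays adjacent to it. If there is any gap, $g$ must exit to $K_{m\pm1}$ or beyond and come back. If it exits upward, returning downward requires a bad interval, and the first bad interval where $g$ decreases back into $K_m$ is itself in $\mathcal J$. If it exits downward, the preceding descent ends in a bad interval of $\mathcal J$. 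Thus every maximal run of consecutive reaching intervals, apart from the first, is adjacent to a bad interval in $\mathcal J$ that effects the re-entry. So the number of runs is $\le 1+\#\{\text{bad in }\mathcal J\}$.

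Within a single run of consecutive reaching intervals, the good ones each raise $g$ by at least $\tfrac{99}{100}t r_{n_1^+}$. The bad ones change $g$ by at most $O(N^{1/100}r_{n_1^+})$ each. Since the run stays in $\mathcal{N}(K_m)$, we get
\[
\#\text{good}\cdot t r_{n_1^+}\lesssim N^{1/100}r_{n_1^+}\bigl(1+\#\text{bad in run}\bigr).
\]
Summing over runs, and using $t\gtrsim 1$, gives
\[
\#\mathcal J\lesssim N^{1/100}\bigl(1+\#\text{bad in }\mathcal J\bigr)+\#\text{bad}.
\]
Inserting $\#\text{bad}\le cN^{-1/100}\#\mathcal J$ with $c$ small absorbs the bad term into the left side, and yields $\#\mathcal J=O(N^{1/100})$. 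I expect the most delicate step to be the run-counting argument, that is, making precise that each re-entry into $K_m$ is charged to a distinct bad reaching interval. This is where the rising sun lemma replaces the intermediate value theorem. I would first try the formulation "$g$ restricted to good intervals is monotone, so each downward crossing of the level $\inf K_m$ occurs inside a bad interval", and then charge each such crossing to that interval.
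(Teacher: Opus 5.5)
Your proof is correct and follows essentially the paper's argument: you split the reaching intervals into maximal consecutive strings, show each string has $O(N^{1/100})$ good intervals per bad one, and charge the number of strings to bad reaching intervals. The differences are small: you bound the good intervals by a net-displacement count instead of the paper's length bound on strings of consecutive good intervals with its exceptional/non-exceptional case split, and you charge strings to the re-entry interval instead of using the paper's intermediate value argument between exceptional strings. One minor point is that your charging, as written, is only two-to-one, so it gives $\#\text{runs}\le 1+2\,\#\text{bad}$ unless you add that a single $J_j$ cannot cross $K_m$ (its image has diameter at most $(N^{1/100}+1)r_{n_1^+}<|K_m|$); either way only constants change.
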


\begin{proof}
    Since the function $x\mapsto F_{n_1^+}(x)+\gamma_{\alpha,n_1^+}(x)$ is linear and its derivative is positive on good intervals $J_j$, $\{F_{n_1^+}(x)+\gamma_{\alpha,n_1^+}(x):x\in J_j\}$ is a nonempty interval and by Lemma \ref{lem:increase on good interval},
    \[\mathcal{L}^{1}(\{F_{n_1^+}(x)+\gamma_{\alpha,n_1^+}(x):x\in J_j\})\sim_t \mathcal{L}^1(J_j).\]
    Since $K_m$ has length $100N^{1/100}r_{n_1^+}$, any consecutive string $J_{j+1},J_{j+2},\cdots,J_{j+p}$ of good intervals that reach $K_m$ can have length at most $p=O(N^{1/100})$.

    Now consider a maximal consecutive string $\Upsilon=\{J_{j+1},J_{j+2},\cdots,J_{j+q}\}$ that reach $K_m$, that is, $J_j$ and $J_{q+1}$ do not reach $K_m$. We split into two cases:
 
        
        \begin{description}
\item[\underline{Case 1.}]$N^{1/100}\ll q$, that is, $\Upsilon$ has length much larger than $N^{1/100}$.

        If $\Upsilon$ has $B$ bad intervals, there are $q-B$ good intervals. Note that a consecutive string of good intervals can at most have length $O(N^{1/100})$. So in the worst case, between any two bad intervals, there can be at most $O(N^{1/100})$ good intervals. This gives that
\begin{gather*}
q-B\lesssim (B+1)N^{1/100}, \hspace{3mm}\text{which implies,}\\
 B\geqslant  \frac{q-CN^{1/100}}{CN^{1/100}+1}\gtrsim \frac{q-CN^{1/100}}{CN^{1/100}}=\frac{q}{CN^{1/100}}-1,
 \end{gather*}
        where $C>0$ is the implicit constant. Since $N^{1/100}\ll q$,
        \[\frac{B}{q}\gtrsim \frac{1}{CN^{1/100}}-\frac{1}{q}\gtrsim N^{-1/100}.\]
        Thus, at least $\gtrsim N^{-1/100}$ of the intervals in $\Upsilon$ must be bad.
\item[\underline{Case 2.}]    $q=O(N^{1/100})$.   
        \begin{enumerate}[(i)]
            \item There is at least one bad interval. Then clearly $\gtrsim N^{-1/100}$ of the intervals in $\Upsilon$ are bad.
            \item $\Upsilon$ consists entirely of good intervals.
        \end{enumerate}
\end{description}

    Call a consecutive string that has length $O(N^{1/100})$ and consists entirely of good intervals that reach $K_m$ an ``exceptional string''; that is, in Case 2 (ii). Note that for every exceptional string, the function $x\mapsto F_{n_1^+}(x)+\gamma_{\alpha,n_1^+}(x)$ increases monotonically from below $K_m$ to above $K_m$. Thus, by the intermediate value theorem, between any two exceptional strings there must be at least one bad interval that reaches $K_m$, so
    \[\#\ \text{of exceptional strings}\leqslant \#\ \text{of bad intervals that reach } K_m+1.\]
    On the other hand, by Cases 1 and 2 (i), on all the non-exceptional strings, $\gtrsim N^{-1/100}$ of the intervals are bad. Since every exceptional string has length $O(N^{1/100})$, the claim follows.
\end{proof}

\begin{figure}[H]
\centering
\begin{tikzpicture}[x=0.4cm,y=0.4cm,line cap=round,line join=round]

\draw[->,line width=0.6pt] (0,0) -- (26,0);
\draw[->,line width=0.6pt] (0,0) -- (0,14);

\draw[dashed,dash pattern=on 4pt off 4pt,line width=0.5pt] (0,10) -- (26,10);
\draw[dashed,dash pattern=on 4pt off 4pt,line width=0.5pt] (0,6)  -- (26,6);

\draw[decorate,decoration={brace,amplitude=6pt}, thick] (0,6) -- (0,10);
\node[left] at (-0.5,8) {$K_m$};

\foreach \i in {1,...,25} {
  \draw[line width=0.45pt] (\i,0.3) -- (\i,-0.3);
}

\node at (1.5,0.35) {$g$};
\node at (2.5,0.35) {$g$};
\node at (3.5,0.35) {$g$};
\node at (4.5,0.35) {$g$};
\node at (5.5,0.35) {$g$};

\node at (7.5,0.4) {$b$};
\node at (8.5,0.4) {$b$};
\node at (9.5,0.35) {$g$};

\node at (10.5,0.4) {$b$};
\node at (11.5,0.35) {$g$};
\node at (12.5,0.4) {$b$};
\node at (13.5,0.4) {$b$};
\node at (14.5,0.4) {$b$};

\node at (16.5,0.35) {$g$};
\node at (17.5,0.4) {$b$};
\node at (18.5,0.4) {$b$};
\node at (19.5,0.35) {$g$};
\node at (20.5,0.35) {$g$};
\node at (21.5,0.35) {$g$};
\node at (22.5,0.35) {$g$};
\node at (23.5,0.35) {$g$};

\draw[line width=0.6pt]
  plot coordinates {
    (1,5.5) (2,6.5) (3,7) (4,8) (5,8.5)
    (7,11) (8,9) (9,8.5) (10,9) (11,8)
    (12,9.5) (13,8) (14,7.5) (15,5) (16,5.5)
    (17,8) (18,7) (19,6.5) (20,7) (22,7.5) (24,10.5)
    (25,11)
  };

\end{tikzpicture}
\caption{This is a graph of the function $x\mapsto F_{n_1^+}(x)+\gamma_{\alpha,n_1^+}(x)$. We distinguish the intervals $J_j$ that reach an interval $K_m$. The good intervals $J_j$ that reach $K_m$ are marked with “$g$”, while the bad intervals are marked with “$b$”. Note that the first string is an exceptional string.}
\end{figure}

\subsubsection{The case of $t<0$}
Next, we consider the case $t<0$. Recall the definition of $t$ in (\ref{eqn:fix point derivative}). The argument differs slightly from the case $t>0$. It relies on the geometry of $\Gamma$, which implicitly exhibits the \textit{transversality} of the curve projections.

Recall we partition $I$ into $S=r_{n_1^-}/r_{n_1^+}$ dyadic intervals $J_1,J_2,\cdots,J_S$ of length $r_{n_1^+}$. We also partition $\mathbb R$ into disjoint intervals $\{K_m:m\in\mathbb Z\}$ of length $100N^{1/100}r_{n_1^+}$.

\begin{dfn}[``Reach'' Definition 2]
    We say a dyadic interval $J_j$ \textit{reaches} an interval $K_m$ if there exists at least one $x\in J_j$ with $\Phi_\alpha(x,F_{n_1^+}(x))\in K_m$.
\end{dfn}

\begin{lem}[$J_j$'s Maximal Reach Number]
    For each $J_j$, there exists a unique $K_m$ such that
    \begin{enumerate}[\rm(1)]
        \item either $J_j$ only reaches the interval $K_m$, or
        \item $J_j$ reaches both intervals $K_m$ and $K_{m+1}$.
    \end{enumerate}
\end{lem}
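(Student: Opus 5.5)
This is the $t<0$ analogue of Lemma \ref{lem:maximal number reach}, so I will copy that proof and only change the function being controlled. Define $\psi(x):=\Phi_\alpha(x,F_{n_1^+}(x))=F_{n_1^+}(x)+\gamma(\alpha-x)=F_{n_1^+}(x)+\gamma_\alpha(x)$.

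\textbf{Step 1: oscillation of $\psi$ on one $J_j$.} Take $x_1,x_2\in J_j$. Since $\Lip(F_{n_1^+})\leqslant N^{1/100}$ and $\sup|\gamma'|\leqslant 1$, so that $\Lip(\gamma_\alpha)\leqslant 1$, we get
\[|\psi(x_1)-\psi(x_2)|\leqslant (N^{1/100}+1)|x_1-x_2|\leqslant (N^{1/100}+1)r_{n_1^+}<100N^{1/100}r_{n_1^+}.\]
This uses only the Lipschitz bound on $\gamma$. It does not need $\gamma_\alpha$ to be piecewise linear, which is the one difference from the $t>0$ case, where $\gamma_{\alpha,n_1^+}$ appeared instead of $\gamma_\alpha$.

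\textbf{Step 2: the image is an interval.} $\psi$ is continuous, so by the intermediate value theorem $\psi(J_j)$ is a compact interval. By Step 1 its length is strictly less than $\mathcal{L}^1(K_m)=100N^{1/100}r_{n_1^+}$.

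\textbf{Step 3: it meets at most two consecutive $K_m$.} The $K_m$ are consecutive disjoint intervals of equal length tiling $\mathbb R$. An interval shorter than that common length cannot contain a whole $K_{m'}$ together with points on both sides of it. Hence $\psi(J_j)$ meets either exactly one $K_m$, or exactly two adjacent ones $K_m,K_{m+1}$. Let $m$ be the smallest index with $K_m\cap\psi(J_j)\neq\varnothing$. This $m$ is unique, and $J_j$ reaches $K_m$ only, or $K_m$ and $K_{m+1}$.

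There is no real obstacle here. The only points to check are that the bound in Step 1 holds with the true curve $\gamma$, and that connectedness of $\psi(J_j)$ comes from continuity rather than linearity.
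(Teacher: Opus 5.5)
Your proposal is correct and is essentially the paper's proof. The paper's proof consists only of your Step 1 bound $|\Phi_\alpha(x_1,F_{n_1^+}(x_1))-\Phi_\alpha(x_2,F_{n_1^+}(x_2))|\leqslant (N^{1/100}+1)|x_1-x_2|\leqslant 100N^{1/100}r_{n_1^+}$, using the true curve $\gamma$; your Steps 2--3 spell out the elementary deduction the paper leaves implicit (and the diameter bound alone already forces the image to meet at most two adjacent $K_m$, so the intermediate value theorem in Step 2 is not actually needed).
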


\begin{proof}
    Let $x_1,x_2\in J_j$. Following Definition \ref{dfn:curve proj on cube}, we have
    \[|\Phi_\alpha(x_1,F_{n_1^+}(x_1))-\Phi_\alpha(x_2,F_{n_1^+}(x_2))|\leqslant (N^{1/100}+1)|x_1-x_2|\leqslant 100N^{1/100}r_{n_1^+}.\qedhere\]
\end{proof}

\begin{dfn}[Good and Bad Interval Definition 2]
    A dyadic interval $J_j$ is said to be \textit{good} if
    \[\left|F_{n_1^+}'-c\right|<-\frac{t}{100},\]
    and \textit{bad} otherwise.
\end{dfn}

\begin{lem}[Upper Bound for the Number of Bad Intervals]
    At most $O(N^{-3/100}S)$ of the intervals $J_j$ are bad.
\end{lem}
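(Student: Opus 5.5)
The plan is to argue exactly as in Lemma \ref{lem:upper bound on bad 1}, but this time the quantity to control is only $F_{n_1^+}'-c$, with $c=F_{n_1^-}'$ on $I_0$. Because $F_{n_1^+}'$ is constant on each $J_j$, the total length of the bad $J_j$ is
\[r_{n_1^+}\cdot\#\{\text{bad }J_j\}=\mathcal{L}^1\left(\left\{x\in I_0:\left|F_{n_1^+}'(x)-c\right|\geqslant -\tfrac{t}{100}\right\}\right).\]
So it is enough to show that this set has measure $\lesssim N^{-3/100}\mathcal{L}^1(I_0)=N^{-3/100}r_{n_1^+}S$.

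To get this, I would apply Chebyshev's inequality:
\[\mathcal{L}^1\left(\left\{x\in I_0:\left|F_{n_1^+}'-F_{n_1^-}'\right|\geqslant \tfrac{|t|}{100}\right\}\right)\leqslant \frac{100^2}{t^2}\int_{I_0}\left|F_{n_1^+}'-F_{n_1^-}'\right|^2\,d\mathcal{L}^1 .\]
The integral on the right is at most the left-hand side of (\ref{eqn:L2 sum on I0}), so it is $\lesssim N^{-3/100}\mathcal{L}^1(I_0)$. This case is easier than the case $t>0$. There is no $\gamma$ term to handle, so we do not need the estimate $|\gamma_{\alpha,n_1^-}'+\gamma'(\alpha-a)|^2\lesssim N^{-3/100}$ that came from the bi-Lipschitz property of $\gamma'$ and the smallness $r_{n_1^-}\leqslant 2^{-N/20}$. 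Dividing by $r_{n_1^+}$ then gives at most $O(N^{-3/100}S)$ bad intervals.

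The one step that needs care is making sure the factor $t^{-2}$ is absorbed into the implicit constant. This follows from how $t$ was chosen: by assumption $|t|=|c-\gamma'(\alpha-a)|\gtrsim 1$, and the implied constant depends only on $\Gamma$ (through $M$) and on $|\alpha-\alpha'|$, since $\gamma'$ is bi-Lipschitz. I would state this explicitly, so that the bound $O(N^{-3/100}S)$ is uniform in $F$ and in the choice of $I_0$. With that in place, the argument is the same Chebyshev-plus-(\ref{eqn:L2 sum on I0}) estimate as in the $t>0$ case.
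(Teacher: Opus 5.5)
Your proposal is correct and follows essentially the same route the paper points to: Chebyshev's inequality combined with (\ref{eqn:L2 sum on I0}), with $|t|\gtrsim 1$ (a constant depending on $M$ and $|\alpha-\alpha'|$) absorbing the $t^{-2}$ factor. You are also right that under the second good/bad definition only the $F$-term appears, so the $\gamma$-estimate from Lemma \ref{lem:upper bound on bad 1} is not needed here.
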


\begin{proof}
    The proof is similar to Lemma \ref{lem:upper bound on bad 1} by using Chebyshev's inequality.
\end{proof}

\begin{lem}[Curve projection is decreasing on good interval]\label{lem:decrease on good}
    Let $J_j$ be a good interval. Then the function $x\mapsto \Phi_\alpha(x,F_{n_1^+}(x))$ is decreasing on $J_j$.
\end{lem}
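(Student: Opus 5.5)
The plan is a direct derivative computation. By Definition \ref{dfn:curve proj on cube}, the function in question is
\[
\psi(x):=\Phi_\alpha(x,F_{n_1^+}(x))=F_{n_1^+}(x)+\gamma(\alpha-x).
\]
Two facts about its regularity will be used. First, $J_j$ is a dyadic interval of length $r_{n_1^+}$, and $F_{n_1^+}$ is linear on it, so $F_{n_1^+}'$ is a constant on $J_j$. Second, $\gamma$ is $C^1$. Hence $\psi$ is $C^1$ on $J_j$, with
\[
\psi'(x)=F_{n_1^+}'-\gamma'(\alpha-x).
\]
It therefore suffices to show $\psi'(x)<0$ for every $x\in J_j$. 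Strict monotonicity then follows from the mean value theorem.

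The key step is to split $\psi'(x)$ around the reference quantities at the left endpoint $a$ of $I_0=[a,b]$, using the definition of $t$ in \eqref{eqn:fix point derivative}:
\[
\psi'(x)=\bigl(F_{n_1^+}'-c\bigr)+\bigl(c-\gamma'(\alpha-a)\bigr)+\bigl(\gamma'(\alpha-a)-\gamma'(\alpha-x)\bigr).
\]
I bound the three terms in turn.
\begin{enumerate}[\rm(1)]
\item The first term is less than $-t/100=|t|/100$, because $J_j$ is good.
\item The second term equals $t$, which is negative in this case.
\item For the third term, note that $J_j\subseteq I_0$ gives $x\geqslant a$, so $\alpha-x\leqslant\alpha-a$. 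Since $\gamma'$ is strictly decreasing (the concave-down normalization), $\gamma'(\alpha-x)\geqslant\gamma'(\alpha-a)$. Hence the third term is $\leqslant 0$.
\end{enumerate}
Adding these gives $\psi'(x)<t+|t|/100=\tfrac{99}{100}t<0$ throughout $J_j$. In fact the slope is $\lesssim -|t|$, which is the quantitative form needed later in the rising-sun counting, mirroring Lemma \ref{lem:increase on good interval}.

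The only delicate point is the third term. Unlike the case $t>0$, the good-interval condition here controls only $F_{n_1^+}'$ and not the curve part. So the sign of $\gamma'(\alpha-a)-\gamma'(\alpha-x)$ must come from the geometry: the monotonicity of $\gamma'$, together with anchoring $t$ at the left endpoint $a$. I expect this to be the point needing care. One has to check that the orientation conventions ($x\geqslant a$, $\gamma'$ decreasing) line up so that this term has the favorable sign rather than requiring a smallness estimate like the one in Lemma \ref{lem:upper bound on bad 1}.
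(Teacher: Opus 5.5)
Your proof is correct and follows the same route as the paper's: the three-term split $\bigl(F_{n_1^+}'-c\bigr)+\bigl(c-\gamma'(\alpha-a)\bigr)+\bigl(\gamma'(\alpha-a)-\gamma'(\alpha-x)\bigr)$, bounded by the good-interval condition, $t<0$, and the monotonicity of $\gamma'$ together with $x\geqslant a$. This gives slope at most $\tfrac{99}{100}t<0$ on $J_j$.
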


\begin{proof}
    We compute the derivative of the function $x\mapsto \Phi_\alpha(x,F_{n_1^+}(x))$ as $F_{n_1^+}'-\gamma'(\alpha-x)$.
    Since $\gamma'$ is monotone decreasing, $x\mapsto F_{n_1^+}'-\gamma'(\alpha-x)$ is also monotone decreasing on $J_j$. Moreover, $\gamma'(\alpha-a)-\gamma'(\alpha-x)\leqslant 0$ for every $x\in J_j$.
    Thus,
    \[F_{n_1^+}'-\gamma'(\alpha-x)=F_{n_1^+}'-c+c-\gamma'(\alpha-a)+\gamma'(\alpha-a)-\gamma'(\alpha-x)\leqslant -\frac{t}{100}+t\sim t.\qedhere\]
\end{proof}

\begin{lem}[Oscillation Control on Good Intervals]\label{lem:good osc control for curve proj}
    Fix $J_j$.
    \begin{enumerate}[\rm(1)]
        \item If $J_j$ is a good interval,
        \[\max\limits_{x,y\in J_j} \left|\Phi_\alpha(x,F_{n_1^+}(x))-\Phi_\alpha(y,F_{n_1^+}(y))\right|\sim_t\mathcal{L}^1(J_j).\]
        \item If $J_j$ is a bad interval, the function $x\mapsto \Phi_\alpha(x,F_{n_1^+}(x))$ is still concave down, but the slope can have either sign and has magnitude $O(N^{1/100})$.
    \end{enumerate}
\end{lem}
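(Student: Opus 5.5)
The plan is to work directly with the function $\psi(x):=\Phi_\alpha(x,F_{n_1^+}(x))=F_{n_1^+}(x)+\gamma(\alpha-x)$ on a fixed $J_j$. On $J_j$ the piecewise linear $F_{n_1^+}$ is affine with constant slope $F_{n_1^+}'$. Hence $\psi$ is differentiable a.e. (indeed $C^1$, since $\gamma$ is $C^1$), with
\[
\psi'(x)=F_{n_1^+}'-\gamma'(\alpha-x).
\]
Both parts will follow from bounds on this derivative.

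For part (1), the upper bound uses the good-interval condition $|F_{n_1^+}'-c|<-t/100$. I will write
\[
\psi'(x)=(F_{n_1^+}'-c)+t+\bigl(\gamma'(\alpha-a)-\gamma'(\alpha-x)\bigr).
\]
The first two terms have absolute value at most $\tfrac{101}{100}|t|$. The last term is at most $M|x-a|\leqslant M r_{n_1^-}\leqslant M2^{-N/20}$, which is $\ll |t|$ for $N$ large, because $|t|\gtrsim 1$ by the choice made before (\ref{eqn:fix point derivative}). Thus $|\psi'|\lesssim |t|$ on $J_j$, and the mean value theorem gives that the oscillation is $\lesssim_t \mathcal{L}^1(J_j)$.

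For the lower bound in part (1), Lemma \ref{lem:decrease on good} already gives $\psi'(x)\leqslant \tfrac{99}{100}t<0$ on $J_j$, using the sign of $\gamma'(\alpha-a)-\gamma'(\alpha-x)$. Hence $\psi$ is strictly decreasing with $|\psi'|\geqslant \tfrac{99}{100}|t|$. Taking $x,y$ to be the endpoints of $J_j$ then gives oscillation $\geqslant \tfrac{99}{100}|t|\,\mathcal{L}^1(J_j)$. Together these give $\sim_t\mathcal{L}^1(J_j)$.

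For part (2), concavity follows because $\psi$ is an affine function plus $x\mapsto\gamma(\alpha-x)$. Since $\gamma'$ is decreasing, $x\mapsto\gamma'(\alpha-x)$ is increasing, so $\psi'(x)=F_{n_1^+}'-\gamma'(\alpha-x)$ is decreasing. A function with decreasing derivative is concave, i.e.\ concave down. The size bound is $|\psi'|\leqslant \Lip(F_{n_1^+})+\sup|\gamma'|\leqslant N^{1/100}+1=O(N^{1/100})$, as in the proof of the preceding reach lemma. The sign of $\psi'$ is unconstrained, since $F_{n_1^+}'$ may be far from $c$.

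The only delicate step is controlling the error term $\gamma'(\alpha-a)-\gamma'(\alpha-x)$ in the upper bound. I will handle it exactly as in Lemma \ref{lem:upper bound on bad 1}: use the bi-Lipschitz bound on $\gamma'$ and the smallness $r_{n_1^-}\leqslant 2^{-N/20}$, with $N$ taken large enough that $M2^{-N/20}\leqslant |t|/100$.
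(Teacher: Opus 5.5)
Your proposal is correct and follows essentially the paper's argument. The lower bound comes from Lemma~\ref{lem:decrease on good} evaluated at the endpoints of $J_j$, and the upper bound comes from a pointwise bound on the derivative over $J_j$. For that upper bound the paper simply uses $|F_{n_1^+}'|<-t/100+|c|$ and $|\gamma'|\leqslant 1$ rather than your bi-Lipschitz control of $\gamma'(\alpha-a)-\gamma'(\alpha-x)$; this is harmless because $|c|\leqslant|t|+1$ and $|t|\gtrsim 1$. Your check of part (2), via the monotonicity of $\psi'$, supplies a detail that the paper's proof leaves implicit.
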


\begin{proof}
    Write $J_j=[p_j,q_j]$. By Lemma \ref{lem:decrease on good}, the function $x\mapsto \Phi_\alpha(x,F_{n_1^+}(x))$ is decreasing on $J_j$, so it suffices to prove that
    \[\Phi_\alpha(p_j,F_{n_1^+}(p_j))-\Phi_\alpha(q_j,F_{n_1^+}(q_j))\sim_t q_j-p_j.\]
    Since $J_j$ is a good interval, on $J_j$, $\left|F_{n_1^+}'-c\right|<-\frac{t}{100}$ which implies that $\left|F_{n_1^+}'\right|<-\frac{t}{100}+|c|.$
    Thus,
    \begin{align*}
        \Phi_\alpha(p_j,F_{n_1^+}(p_j))-\Phi_\alpha(q_j,F_{n_1^+}(q_j))&=F_{n_1^+}(p_j)-F_{n_1+}(q_j)+\gamma(\alpha-p_j)-\gamma(\alpha-q_j)\\
        &\leqslant\left|F_{n_1^+}'\right|(q_j-p_j)+(q_j-p_j)\\
        &\leqslant \left(-\frac{t}{100}+|c|+1\right)(q_j-p_j).
    \end{align*}

    On the other hand, it follows from Lemma \ref{lem:decrease on good}  that
    \[\Phi_\alpha(p_j,F_{n_1^+}(p_j))-\Phi_\alpha(q_j,F_{n_1^+}(q_j))\gtrsim_t q_j-p_j.\qedhere\]
\end{proof}

We digress briefly at this point to discuss the geometry behind Lemma \ref{lem:good osc control for curve proj}. Let $\mathbf{a}=(a_1,a_2),\mathbf{b}=(b_1,b_2)\in\mathbb R^2$. We may assume $b_1>a_1$ and $\Phi_\alpha(\mathbf{a})-\Phi_\alpha(\mathbf{b})>0$. Note that $\mathbf{a}+\Gamma$ and $\mathbf{b}+\Gamma$ will either intersect at a point or be disjoint.

First, suppose $(\mathbf{a}+\Gamma)\cap(\mathbf{b}+\Gamma)\ne\varnothing$. That is, there exists $s_0,t_0\in \mathbb R$ such that
\[\mathbf{x}:=(a_1,a_2)+(s_0,\gamma(s_0))=(b_1,b_2)+(t_0,\gamma(t_0)).\]
\begin{figure}[H]
    \centering
    \begin{tikzpicture}[scale=1.3]

             \draw[blue, thick, domain=-1.2:1.4, smooth, variable=\t]
             plot ({\t+0.45}, {-0.22*\t*\t-0.7*\t+0.45});
             \draw[red, thick, domain=-1.2:1.4, smooth, variable=\t]
             plot ({\t}, {-0.22*\t*\t-0.7*\t+0.8});

             \filldraw[black] (0.40177,0.48325) circle (1pt) node[below] {\large $\mathbf{x}$};

             \draw[dashed] (0.40177,0.48325) -- (0.40177,1.6);

             \draw[->, thick, green!60!black] (1.4,-0.6112) -- (1.85,-0.9612) node[midway, below, rotate=-30, scale=0.8, text=green!60!black] {$\mathbf{b} - \mathbf{a}$};

             \node[blue] at (-1.1,0.97) {\normalsize $\mathbf{b} + \Gamma$};
             \node[red] at (-1.55,1.32) {\normalsize $\mathbf{a} + \Gamma$};

             \node[fill=gray!30, text=blue!50!black, scale=0.95] at (0.4,1.75)
             {\small $\mathbf{a} + (s_0, \gamma(s_0)) = \mathbf{b} + (t_0, \gamma(t_0))$};


             \begin{scope}[xshift=4.5cm, yshift=0cm, scale=1.2]
             \draw[blue, thick, domain=-1.2:1.4, smooth, variable=\t]
             plot ({\t+0.45}, {-0.22*\t*\t-0.7*\t+0.45});
             \draw[red, thick, domain=-1.2:1.4, smooth, variable=\t]
             plot ({\t}, {-0.22*\t*\t-0.7*\t+0.8});

             \draw[orange, thick] (-0.2,-0.5) -- (-0.2,1.5);
             \node[orange!80!black] at (-0.2,1.6) {\normalsize $\ell_\alpha$};
             \filldraw[black] (0.40177,0.48325) circle (1pt) node[below] {\large $\mathbf{x}$}; 
             \filldraw[black] (-0.2,0.48325) circle (1pt); 
             \filldraw[black] (-0.2,0.9312) circle (1pt); 
             \filldraw[black] (-0.2,0.81205) circle (1pt); 

             \draw[dashed] (0.40177,0.48325) -- (-0.2,0.48325); 
             \node[scale=1] at (0.101,0.3) {\small $d_\alpha$};

             \draw[dashed] (-0.2,0.81205) -- (-0.5,0.5); 
             \draw[dashed] (-0.2,0.9312) -- (0.2,1.12);  

             \node[blue] at (-1.1,0.97) {\normalsize $\mathbf{b} + \Gamma$};
             \node[red] at (-1.55,1.32) {\normalsize $\mathbf{a} + \Gamma$};

            \node[fill=gray!30, text=blue!70!black, scale=1] at (-0.7,0.5)
            {$\Phi_\alpha(\mathbf{b})$};
            \node[fill=gray!30, text=red!60!black, scale=1] at (0.4,1.12)
            {$\Phi_\alpha(\mathbf{a})$};
            \end{scope}
    \end{tikzpicture}
    \caption{$\mathbf{x},\ d_\alpha,\ \Phi_\alpha(\mathbf{a})$, and $\Phi_\alpha(\mathbf{b})$}\label{intersc}
\end{figure}

Comparing coordinates, we have $x_1=s_0+a_1=t_0+b_1$ and $x_2=a_2+\gamma(s_0)=b_2+\gamma(t_0)$.
Set
\[d_\alpha:=\text{dist}(\mathbf{x},\ell_\alpha)=|\alpha-x_1|.\]
Since $b_1-a_1>0$ and $\Phi_\alpha(\mathbf{a})-\Phi_\alpha(\mathbf{b})>0$, by the convexity of $\Gamma$, $d_\alpha=x_1-\alpha$ and $\mathbf{b}$ must lie to the right of $\mathbf{a}$.

We can show that
\[\Phi_\alpha(\mathbf{a})-\Phi_\alpha(\mathbf{b})\sim d_\alpha(b_1-a_1).\]
This estimate suggests that $d_\alpha$ captures the difference between $\Phi_\alpha(\mathbf{a})$ and $\Phi_\alpha(\mathbf{b})$.

Let $J_j=[p_j,q_j]$ be a good interval. Consider the points $\mathbf{a}=(p_j,F_{n_1^+}(p_j))$ and $\mathbf{b}=(q_j,F_{n_1^+}(q_j))$. We do not know the exact value of $|F_{n_1^+}(p_j)-F_{n_1^+}(q_j)|$. However, since $J_j$ is a good interval and the length $q_j-p_j=r_{n_1^+}$ is fixed, this quantity is bounded. Therefore, $d_\alpha$ is controlled by $|F_{n_1^+}(p_j)-F_{n_1^+}(q_j)|$. Geometrically, this can be interpreted as a vertical shift in a fixed range of the curves.

Next, suppose $(\mathbf{a}+\Gamma)\cap(\mathbf{b}+\Gamma)=\varnothing$. In this case we apply a vertical shift to $\mathbf{a}+\Gamma$ to reduce to the intersection case. In fact, since $\Lambda$ is compact, we may define the shift by
\[d:=\min_{\lambda\in \Lambda} \left(\Phi_\lambda(\mathbf{a})-\Phi_\lambda(\mathbf{b})\right).\]
In the case where $\mathbf{a}=(p_j,F_{n_1^+}(p_j))$ and $\mathbf{b}=(q_j,F_{n_1^+}(q_j))$, the quantity $d$ is again controlled by $q_j-p_j$. The reason is the same as in the intersection case.

In fact, from the discussion above, we see that the curve projection behaves similar to the orthogonal projection in the sense that it separates points at a predictable rate. Furthermore, we can deduce that
\[\mathcal{L}^1(\{\lambda\in\Lambda:|\Phi_\lambda(\mathbf{a})-\Phi_\lambda(\mathbf{b})|\leqslant\delta|\mathbf{a}-\mathbf{b}|\})\lesssim\delta,\quad\text{for all}\;\;\mathbf{a},\mathbf{b}\in E,\delta>0,\]
which is the \textit{transversal condition}. Geometrically, it means that for any two distinct points $\mathbf{a},\mathbf{b}\in E$, the set of parameters $\lambda$ for which their projections are nearly aligned has small measure. See \cite[Definition~1.3]{BT20} and detailed proof in \cite[Lemma~2.2]{BT20}.

Returning now to the main proof, we first state the Rising Sun Lemma in this case, which is symmetric to Lemma \ref{rising sun 1}.

\begin{lem}[Rising Sun Type Lemma 2]\label{rising sun 2}
    Fix $K_m$. Then at least one of the following statements is true:
    \begin{enumerate}[\rm(1)]
        \item There are at most $O(N^{1/100})$ intervals $J_j$ that reach $K_m$.
        \item Of all the intervals $J_j$ that reach $K_m$, the proportion of those $J_j$ that are bad is $\gtrsim N^{-1/100}$.
    \end{enumerate}
\end{lem}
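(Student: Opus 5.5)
I will follow the proof of Lemma \ref{rising sun 1} almost verbatim, replacing the piecewise linear map $x\mapsto F_{n_1^+}(x)+\gamma_{\alpha,n_1^+}(x)$ by $g(x):=\Phi_\alpha(x,F_{n_1^+}(x))=F_{n_1^+}(x)+\gamma(\alpha-x)$, and ``increasing'' by ``decreasing''. The three inputs are already in place: the maximal reach lemma for Definition 2 (each $J_j$ reaches at most two consecutive $K_m$, since $g$ is $(N^{1/100}+1)$-Lipschitz), Lemma \ref{lem:decrease on good} ($g$ is decreasing on good $J_j$), and Lemma \ref{lem:good osc control for curve proj} (on a good $J_j$, the image $g(J_j)$ is an interval of length $\sim_t \mathcal{L}^1(J_j)=r_{n_1^+}$). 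Note that $g$ is continuous on $I_0$, being a sum of a continuous piecewise linear function and a $C^1$ function.

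\textbf{First step.} I will show that a consecutive string of good intervals reaching $K_m$ has length $O(N^{1/100})$. Along such a string $J_{j+1},\dots,J_{j+p}$, $g$ is continuous and strictly decreasing, and each $g(J_i)$ has length $\gtrsim_t r_{n_1^+}$. Hence $g$ drops by $\gtrsim_t p\, r_{n_1^+}$ across the string. All but the first and last of these intervals must map into $K_m$, whose length is $100N^{1/100}r_{n_1^+}$, so $p\lesssim_t N^{1/100}$. Since $|t|\gtrsim 1$ and $|t|\le N^{1/100}+1$, the implicit constant can be absorbed; I will check this carefully. In particular I must use the lower bound $g'\le t-t/100<0$ from Lemma \ref{lem:decrease on good}, which is uniform in $t$ because $|t|\gtrsim 1$.

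\textbf{Second step.} Take a maximal consecutive string $\Upsilon$ of length $q$ reaching $K_m$ and split into the same cases. In Case 1, $q\gg N^{1/100}$, and the gap-counting argument gives $B\ge q/(CN^{1/100})-1$ bad intervals, hence a proportion $\gtrsim N^{-1/100}$. In Case 2(i), $q=O(N^{1/100})$ and $\Upsilon$ contains at least one bad interval. In Case 2(ii), $\Upsilon$ is an exceptional string consisting entirely of good intervals. For an exceptional string, maximality together with the monotone decrease of $g$ forces $g$ to pass from above $K_m$ to below $K_m$. Two consecutive exceptional strings must therefore be separated, via the intermediate value theorem applied to the continuous $g$, by an interval on which $g$ climbs back above $K_m$, and that interval must cross $K_m$. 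Any good interval is decreasing, so the portion where $g$ increases through $K_m$ lies in a bad interval that reaches $K_m$. This gives
\[\#\text{ exceptional strings}\le \#\text{ bad intervals reaching }K_m+1.\]

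\textbf{Counting.} Let $T$ be the total number of intervals reaching $K_m$ and $B$ the number of bad ones. The exceptional strings contribute at most $O(N^{1/100})(B+1)$ intervals. The remaining intervals have a bad proportion $\gtrsim N^{-1/100}$. So either $T=O(N^{1/100})$, which is alternative (1), or $B\gtrsim N^{-1/100}T$, which is alternative (2).

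\textbf{Main obstacle.} The delicate point is the intermediate value step in the exceptional-string count. Unlike the linear case, $g$ on a bad interval is only concave rather than linear, so the argument needs only that some bad interval reaching $K_m$ lies between two exceptional strings. This follows from continuity of $g$ together with the fact that good intervals never increase, which is exactly the content of Lemma \ref{lem:decrease on good}.
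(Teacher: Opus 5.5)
Your proposal is correct and follows the paper's argument: the paper proves this lemma by using Lemma \ref{lem:good osc control for curve proj} to cap strings of good intervals reaching $K_m$ at $O(N^{1/100})$, and then says the rest is identical to Lemma \ref{rising sun 1}. Your explicit check that the exceptional-string count needs only continuity of $g$ and its decrease on good intervals (not linearity on bad ones) is exactly the detail the paper leaves implicit, and it is sound; a string touching an endpoint of $I_0$ need not start above $K_m$, but your comparison of consecutive exceptional strings uses only their interior ends, so nothing changes.
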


\begin{proof}
    By Lemma \ref{lem:good osc control for curve proj}, for every good interval $J_j$,
    \[\mathcal{L}^{1}(\{\Phi_\alpha(x,F_{n_1^+}(x)):x\in J_j\})\sim_t \mathcal{L}^1(J_j).\]
    Since $K_m$ has length $100N^{1/100}r_{n_1^+}$, any consecutive string $J_{j+1},J_{j+2},\cdots,J_{j+p}$ of good intervals that reach $K_m$ can have length at most $p=O(N^{1/100})$. The rest of proof is same as Lemma \ref{rising sun 1}.
\end{proof}

\begin{figure}[H]
\centering
\begin{tikzpicture}[x=0.4cm,y=0.4cm,line cap=round,line join=round]

\draw[->,line width=0.6pt] (0,0) -- (26,0);
\draw[->,line width=0.6pt] (0,0) -- (0,14);

\draw[dashed,dash pattern=on 4pt off 4pt,line width=0.5pt] (0,10) -- (26,10);
\draw[dashed,dash pattern=on 4pt off 4pt,line width=0.5pt] (0,6)  -- (26,6);

\draw[decorate,decoration={brace,amplitude=6pt}, thick] (0,6) -- (0,10);
\node[left] at (-0.5,8) {$K_m$};

\foreach \i in {1,...,25} {
  \draw[line width=0.45pt] (\i,0.3) -- (\i,-0.3);
}

\node at (1.5,0.35) {$g$};
\node at (2.5,0.35) {$g$};
\node at (3.5,0.35) {$g$};
\node at (4.5,0.4) {$b$};
\node at (5.5,0.4) {$b$};

\node at (8.5,0.35) {$g$};
\node at (9.5,0.4) {$b$};

\node at (10.5,0.4) {$b$};
\node at (11.5,0.35) {$g$};
\node at (12.5,0.35) {$g$};
\node at (13.5,0.35) {$g$};

\node at (16.5,0.4) {$b$};
\node at (17.5,0.35) {$g$};
\node at (18.5,0.4) {$b$};
\node at (19.5,0.4) {$b$};
\node at (20.5,0.4) {$b$};
\node at (21.5,0.4) {$b$};

\draw[line width=0.6pt]
  (1,11)
    .. controls (1.33,10.7) and (1.66,10.2) .. (2,9.5)
    .. controls (2.33,9.2) and (2.66,8.7) .. (3,8)
    .. controls (3.33,7.7) and (3.66,7.2) .. (4,6.5)
    .. controls (4.33,7.2) and (4.66,7.7) .. (5,8)
    .. controls (5.33,9) and (5.66,9.7) .. (6,10.2)
    .. controls (6.33,11.2) and (6.66,11.7) .. (7,12)
    .. controls (7.33,11.8) and (7.66,11.3) .. (8,10.2)
    .. controls (8.33,9.9) and (8.66,9.2) .. (9,8)
    .. controls (9.33,8.5) and (9.66,8.8) .. (10,8.9)
    .. controls (10.33,10) and (10.66,10.5) .. (11,10.8)
    .. controls (11.33,10.6) and (11.66,10) .. (12,8.5)
    .. controls (12.33,8.3) and (12.66,8) .. (13,7.6)
    .. controls (13.33,7.5) and (13.64,7.1) .. (14,6)
    .. controls (14.33,5.9) and (14.66,5.7) .. (15,5)
    .. controls (15.33,4.8) and (15.66,4.3) .. (16,3.3)
    .. controls (16.33,6.5) and (16.66,7) .. (17,7.5)
    .. controls (17.33,7.4) and (17.66,7.2) .. (18,6.5)
    .. controls (18.33,7) and (18.66,7.2) .. (19,7.3)
    .. controls (19.66,9) and (20.33,9.5) .. (21,9.8)
    .. controls (21.33,11) and (21.66,11.4) .. (22,11.6)
    .. controls (22.66,12.6) and (23.33,12.9) .. (24,13);

\end{tikzpicture}
\caption{This is a graph of the function $x\mapsto\Phi_\alpha(x,F_{n_1^+}(x))$. We distinguish the intervals $J_j$ that reach an interval $K_m$. The good intervals $J_j$ that reach $K_m$ are marked with “$g$”, while the bad intervals are marked with “$b$”. Note that the function graph is concave down because the function $x\mapsto\gamma'(\alpha-x)$ is increasing.}
\end{figure}

\subsection{Conclusion by contradiction}\label{sec:contradiction}
We proceed by contradiction. Assume that the conclusion in Theorem \ref{thm:quant curve two} does not hold. Then $\mathcal{L}^1(A)\gtrsim N^{-1/100}$, so
\[\mathcal{L}^1\left(A_{n_0^-}\right)\gtrsim N^{-1/100}.\]
We first localize $I_0\subseteq A_{n_0^-}$ with length $r_{n_1^-}$ satisfying \eqref{eqn:Delta A inter I0} and \eqref{eqn:L2 sum on I0}.

\begin{lem}[Well-behaved Dyadic Interval]{}
    There exists a dyadic interval $I_0\subseteq A_{n_0^-}$ with length $r_{n_1^-}$ such that

\begin{equation}\label{eqn:I_0}
        \int_{I_0} \ N^{2/100}\chi_{\Delta A}+N^{3/100}\left(\left|F_{n_1^+}'-F_{n_1^-}'\right|^2+\left|\gamma_{\alpha,n_1^+}'-\gamma_{\alpha,n_1^-}'\right|^2+\left|\gamma_{\alpha',n_1^+}'-\gamma_{\alpha',n_1^-}'\right|^2.\right)\,d\mathcal{L}^1\lesssim \mathcal{L}^1(I_0).
    \end{equation}
    Therefore, \eqref{eqn:Delta A inter I0} and \eqref{eqn:L2 sum on I0} hold on $I_0$.
\end{lem}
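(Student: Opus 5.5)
This is an averaging (pigeonhole) argument over the dyadic intervals of length $r_{n_1^-}$ contained in $A_{n_0^-}$. Since $n_1^-\geqslant n_0^-$ (because $n_1\geqslant n_0-0.9N^{-3/100}N$ and $N^{-9/100}N\leqslant 0.1N^{-3/100}N$), the grid at scale $r_{n_1^-}$ refines the grid at scale $r_{n_0^-}$. Hence $A_{n_0^-}$ is a disjoint (up to endpoints) union of dyadic intervals $I$ of length $r_{n_1^-}$, and we write $\mathcal{I}$ for this family. Let $G$ denote the integrand in \eqref{eqn:I_0}:
\[
G:=N^{2/100}\chi_{\Delta A}+N^{3/100}\left(\left|F_{n_1^+}'-F_{n_1^-}'\right|^2+\left|\gamma_{\alpha,n_1^+}'-\gamma_{\alpha,n_1^-}'\right|^2+\left|\gamma_{\alpha',n_1^+}'-\gamma_{\alpha',n_1^-}'\right|^2\right).
\]
The plan is to show that $\int_{A_{n_0^-}}G\,d\mathcal{L}^1\lesssim \mathcal{L}^1(A_{n_0^-})$. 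Once this holds, some $I_0\in\mathcal{I}$ must satisfy $\int_{I_0}G\lesssim \mathcal{L}^1(I_0)$; otherwise summing over $\mathcal{I}$ would contradict the global bound.

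\textbf{Global bound.} We have $\int_{A_{n_0^-}}G\leqslant \int_{[0,1]}G$. Two inputs control this.

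The first input concerns $\Delta A$. The interval $[n_1^-,n_1^+]$ lies inside $[n_0^-,n_0^+]$, but this is not even needed: directly, $\Delta A=A_{n_0^-}\setminus A_{n_0^+}$, so \eqref{eqn:Delta A bound} gives $N^{2/100}\mathcal{L}^1(\Delta A)\lesssim N^{-1/100}$.

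The second input concerns the derivative terms. By \eqref{eqn:L2 derivative diff}, $N^{3/100}$ times the $L^2([0,1])$ sum is $\lesssim N^{-1/100}$.

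Together these give $\int_{[0,1]}G\lesssim N^{-1/100}$.

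\textbf{Comparison with the size of $A_{n_0^-}$.} The contradiction hypothesis gives $\mathcal{L}^1(A_{n_0^-})\gtrsim N^{-1/100}$. Therefore
\[
\int_{A_{n_0^-}}G\lesssim N^{-1/100}\lesssim \mathcal{L}^1(A_{n_0^-})=\sum_{I\in\mathcal{I}}\mathcal{L}^1(I).
\]
Choosing the implicit constant in \eqref{eqn:I_0} larger than the ratio of these constants, pigeonholing yields some $I_0\in\mathcal{I}$ with $\int_{I_0}G\lesssim\mathcal{L}^1(I_0)$.

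\textbf{Deriving the two properties.} Since every summand of $G$ is nonnegative, each is separately bounded. Dividing the bound on the $\chi_{\Delta A}$ term by $N^{2/100}$ gives \eqref{eqn:Delta A inter I0}. Dividing the bound on the derivative terms by $N^{3/100}$ gives \eqref{eqn:L2 sum on I0}.

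\textbf{Main obstacle.} The only delicate point is bookkeeping. One must make sure $I_0$ is a genuine grid interval of length $r_{n_1^-}$ contained in $A_{n_0^-}$, which requires the nesting of dyadic scales $r_{n_1^-}\leqslant r_{n_0^-}$ noted above. One must also check that the exponents match so that the global integral is at most the measure $N^{-1/100}$ coming from the contradiction hypothesis. Both the $\Delta A$ term and the derivative terms were weighted precisely to land at $N^{-1/100}$, so the estimate closes.
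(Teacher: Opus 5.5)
Your proposal is correct and follows the paper's proof. You bound $\int_{A_{n_0^-}}G\,d\mathcal{L}^1$ by $N^{-1/100}\lesssim\mathcal{L}^1(A_{n_0^-})$ using \eqref{eqn:Delta A bound} and \eqref{eqn:L2 derivative diff}, then pigeonhole over the dyadic intervals of length $r_{n_1^-}$ tiling $A_{n_0^-}$. You also make explicit two points the paper leaves implicit: the scale nesting $n_1^-\geqslant n_0^-$, and the use of the contradiction hypothesis to absorb the $N^{-1/100}$ bound.
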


\begin{proof}

    We apply quantitative Lebesgue differentiation theorem and quantitative Rademacher's differentiation theorem to find such $I_0$. By \eqref{eqn:Delta A bound} and \eqref{eqn:L2 derivative diff}, we have
    \[\int_{A_{n_0^-}} N^{2/100}\chi_{\Delta A}\,d\mathcal{L}^1\lesssim \mathcal{L}^1(A_{n_0^-}),\]
    and
    \[\int_{A_{n_0^-}} N^{3/100}\left(\left|F_{n_1^+}'-F_{n_1^-}'\right|^2+\left|\gamma_{\alpha,n_1^+}'-\gamma_{\alpha,n_1^-}'\right|^2+\left|\gamma_{\alpha',n_1^+}'-\gamma_{\alpha',n_1^-}'\right|^2.\right)\,d\mathcal{L}^1\lesssim \,\mathcal{L}^1(A_{n_0^-}).\]
    Combining these two bounds gives
    \[\int_{A_{n_0^-}} \ N^{2/100}\chi_{\Delta A}+N^{3/100}\left(\left|F_{n_1^+}'-F_{n_1^-}'\right|^2+\left|\gamma_{\alpha,n_1^+}'-\gamma_{\alpha,n_1^-}'\right|^2+\left|\gamma_{\alpha',n_1^+}'-\gamma_{\alpha',n_1^-}'\right|^2.\right)\,d\mathcal{L}^1\lesssim \,\mathcal{L}^1(A_{n_0^-}).\]
    Note that $A_{n_0^-}$ is the union of dyadic intervals of length $r_{n_0^-}$, so it is also the union of dyadic intervals of length $r_{n_1^-}$. By the pigeonhole principle, there exists a dyadic interval $I_0\subseteq A_{n_0^-}$ of length $r_{n_1^-}$ such that \eqref{eqn:I_0} holds.
\end{proof}

We aim to derive a contradiction by showing the small projection assumption fails for $\alpha$. To achieve this goal, we will apply the Rising Sun Lemma. We only prove the case of $t>0$, because the argument for the case of $t<0$ is essentially the same.

Recall we partition $\mathbb R$ into $K_m$, and use $K_m$ to study the image of the linear approximation of projection. We will argue that a large proportion of $K_m$ is in $\mathcal{N}_{r_{n_1}}(\Phi_\alpha(E))$. We start by classifying $K_m$ by the Rising Sun Lemma.

\begin{dfn}[Low Multiplicity and High Multiplicity Interval]
    An interval $K_m$ is said to be \textit{low multiplicity} if case (1) of Lemma \ref{rising sun 1} holds, and \textit{high multiplicity} otherwise.
\end{dfn}
We claim that there are only few high-multiplicity intervals $K_m$. To make this precise, we relate the multiplicity of the $K_m$ to the behavior of the intervals $J_j$ in the Rising Sun Lemma. For this purpose, we introduce the notions of typical and atypical intervals.

\begin{dfn}[Typical and Atypical Interval]
    An dyadic interval $J_j$ is said to be \textit{typical} if it only reaches low multiplicity intervals $K_m$, and \textit{atypical} if it reaches at least one high multiplicity interval $K_m$.
\end{dfn}

Heuristically, the number of typical intervals is relatively large.

\begin{lem}[Upper Bound for the Number of Atypical Intervals]{}
    At most $O(N^{-2/100}S)$ of the intervals $J_j$ are atypical.
\end{lem}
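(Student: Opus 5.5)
Since each $J_j$ reaches at most two consecutive intervals $K_m$ (Lemma \ref{lem:maximal number reach}), an atypical $J_j$ must reach some high multiplicity $K_m$. So it suffices to bound
\[\sum_{K_m\ \text{high multiplicity}} \#\{J_j: J_j \text{ reaches } K_m\}\]
by $O(N^{-2/100}S)$. We will control this sum by the number of bad intervals, which Lemma \ref{lem:upper bound on bad 1} bounds by $O(N^{-3/100}S)$.

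Fix a high multiplicity $K_m$. By definition, case (1) of Lemma \ref{rising sun 1} fails for $K_m$, so case (2) holds: among the intervals $J_j$ reaching $K_m$, a proportion $\gtrsim N^{-1/100}$ is bad. This gives
\[\#\{J_j \text{ reaching } K_m\}\lesssim N^{1/100}\,\#\{J_j \text{ bad},\ J_j\text{ reaching } K_m\}.\]
Now sum over the high multiplicity $K_m$. Each bad $J_j$ reaches at most two intervals $K_m$, again by Lemma \ref{lem:maximal number reach}, so it is counted at most twice. Hence
\[\sum_{K_m\ \text{high}}\#\{J_j \text{ reaching } K_m\}\lesssim N^{1/100}\cdot 2\,\#\{\text{bad } J_j\}\lesssim N^{1/100}N^{-3/100}S=N^{-2/100}S.\]
The number of atypical intervals is at most the left-hand side, which gives the claim. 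The case $t<0$ is handled identically, using Lemma \ref{rising sun 2}, the second maximal reach lemma, and the corresponding bad-interval bound.

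The only delicate point is how the proportion in case (2) of the Rising Sun Lemma is counted. Its proof works with maximal strings and exceptional strings, and gives the bound $\#\text{exceptional strings}\leq \#\text{bad}+1$. The "$+1$" matters when a $K_m$ has no bad intervals reaching it. That situation cannot arise for a high multiplicity $K_m$: it is reached by many intervals, while exceptional strings have length $O(N^{1/100})$, so a single exceptional string cannot account for all of them. Consequently the count of intervals reaching $K_m$ is bounded by $N^{1/100}$ times the number of bad intervals reaching $K_m$, with no additive error, and the double counting over $m$ is at most a factor of $2$. I expect this step to be the main obstacle: making sure the implicit constants in "high multiplicity" are chosen large enough that the additive term can be absorbed.
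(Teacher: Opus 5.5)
Your proposal is correct and follows the paper's argument. For each high-multiplicity $K_m$ you invoke case (2) of Lemma \ref{rising sun 1}, you use Lemma \ref{lem:maximal number reach} to show each bad $J_j$ is counted at most twice, and you finish with the bound of Lemma \ref{lem:upper bound on bad 1}. Your explicit double-counting sum and your remark on absorbing the ``$+1$'' from the exceptional-string count make precise what the paper's short proof states in words.
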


\begin{proof}
    By Lemma \ref{rising sun 1}, given a high multiplicity interval $K_m$, of all the atypical intervals $J_j$ that reach this $K_m$, the proportion of those $J_j$ that are bad is $\gtrsim N^{-1/100}$. By Lemma \ref{lem:maximal number reach}, each $J_j$ reaches either one or two $K_m$, so the proportion of the atypical intervals that are bad is $\gtrsim N^{-1/100}$. By Lemma \ref{lem:upper bound on bad 1}, at most $O(N^{-3/100}S)$ of the intervals $J_j$ are bad, so at most $O(N^{-2/100}S)$ of the intervals $J_j$ are atypical.
\end{proof}

Since a $J_j$ is typical if it only reaches low multiplicity $K_m$, we can shift our focus to such low multiplicity intervals. However, we need to distinguish between $J_j$'s that intersect $A$ and those that do not. If $A\cap J_j=\varnothing$, $J_j \subseteq \Delta A$. Note that by \eqref{eqn:Delta A inter I0}, at most $O(N^{-2/100}S)$ of intervals $J_j$ do not intersect $A$. Thus, all but $O(N^{-2/100}S)$ of intervals $J_j$ that intersects $A$ are typical.

Furthermore, if $J_j$ is typical and $A\cap J_j\ne\varnothing$, we fix a $x_j\in A\cap J_j$ such that $F_{n_1^+}(x_j)+\gamma_{\alpha,n_1^+}$ lies in a low-multiplicity interval $K_m$. We denote the collection of all such low-multiplicity intervals $K_m$ as $\mathscr{K}$.  Recall we claimed that the size of $\mathscr{K}$ is relatively large. We verify that claim now.

\begin{lem}[Lower Bound for the Size of $\mathscr{K}$]\label{lem:lower bound for Km}
    $|\mathscr{K}|\gtrsim N^{-1/100}S$.
\end{lem}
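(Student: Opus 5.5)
We plan to obtain the bound on $|\mathscr{K}|$ by double counting: first we show that many intervals $J_j$ are both typical and meet $A$, and then we use the low-multiplicity property to see that no single $K_m$ is hit by too many of the chosen points $x_j$.

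\emph{Step 1: most $J_j$ meet $A$.} By construction $I_0\subseteq A_{n_0^-}$. Since $n_1^+\leqslant n_0+0.9N^{-3/100}N+N^{-9/100}N\leqslant n_0^+$, we have $A_{n_0^+}\subseteq A_{n_1^+}$. Every $J_j$ is a dyadic interval of length $r_{n_1^+}$, so a $J_j$ that does not meet $A$ is disjoint from $A_{n_1^+}$ up to endpoints. Hence $J_j\subseteq A_{n_0^-}\setminus A_{n_0^+}=\Delta A$ up to a null set. By \eqref{eqn:Delta A inter I0}, the total length of such $J_j$ is $\lesssim N^{-2/100}\mathcal{L}^1(I_0)=N^{-2/100}r_{n_1^+}S$. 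So at most $O(N^{-2/100}S)$ of the $J_j$ miss $A$.

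\emph{Step 2: most $J_j$ are typical.} The previous lemma shows that at most $O(N^{-2/100}S)$ of the $J_j$ are atypical. Combining this with Step 1, at least $S-O(N^{-2/100}S)\geqslant S/2$ of the intervals $J_j$ are typical and meet $A$, once $N$ is large. For each such $J_j$ we fix $x_j$ as in the text. The value $F_{n_1^+}(x_j)+\gamma_{\alpha,n_1^+}(x_j)$ then lies in some $K_m\in\mathscr{K}$, and in particular $J_j$ reaches that $K_m$.

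\emph{Step 3: double counting.} We define a map $j\mapsto K_m$ from this family of at least $S/2$ indices into $\mathscr{K}$. Each $K_m\in\mathscr{K}$ is low multiplicity, so by case (1) of Lemma~\ref{rising sun 1} it is reached by at most $O(N^{1/100})$ intervals $J_j$. The fibres of the map therefore have size $O(N^{1/100})$, which gives
\[
\frac{S}{2}\lesssim N^{1/100}|\mathscr{K}|,
\]
that is, $|\mathscr{K}|\gtrsim N^{-1/100}S$.

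\emph{Main obstacle.} The only delicate point is Step 1: we must check the scale bookkeeping $n_0^-\leqslant n_1^-<n_1^+\leqslant n_0^+$. The inequality $n_1^+\leqslant n_0^+$ is what guarantees that a $J_j$ missing $A$ really lies inside $\Delta A$. It holds because $0.9N^{-3/100}N+N^{-9/100}N\leqslant N^{-3/100}N$ when $N^{1/100}\geqslant 3$. The same inequality is used in the other direction, $n_0^-\leqslant n_1^-$, to make sense of $I_0\subseteq A_{n_0^-}$.
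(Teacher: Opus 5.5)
Your proof is correct and follows the paper's argument. You bound the number of $J_j$ that miss $A$ via \eqref{eqn:Delta A inter I0} and the number of atypical $J_j$ via the preceding lemma, then double count using the $O(N^{1/100})$ bound on how many $J_j$ reach a low-multiplicity $K_m$. Your explicit check of $n_0^-\leqslant n_1^-<n_1^+\leqslant n_0^+$, together with the null-set caveat, justifies the step the paper states without proof: that $A\cap J_j=\varnothing$ forces $J_j\subseteq\Delta A$.
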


\begin{proof}
    First, we prove the number of typical intervals $J_j$ intersect $A$ is $\sim S$. It suffices to prove that the number of typical intervals $J_j$ intersect $A$ is $\gtrsim S$. By the 
preceding argument, it is clear that the number of typical interval $J_j$ such that $A\cap J_j\ne\varnothing$ is at least $S(1-CN^{-2/100})$, where $C>0$ is the implicit constant. For sufficiently large $N$, $1-CN^{-2/100}>\frac{1}{2}$, so the number of typical and $A$-nonempty interval $J_j$ is $\gtrsim S$.

    Now we show that $|\mathscr{K}|\gtrsim N^{-1/100}S$. By Lemma \ref{rising sun 1}, there are at most $O(N^{1/100})$ intervals $J_j$ that reach a low-multiplicity interval $K_m$. Then for every low-multiplicity interval $K_m$,
    \[\sum_{K_m\in\mathscr{K}}\hspace{-3mm}\#\left\{x_j\in A\cap J_j:J_j\hspace{.5mm} \text{typical\hspace{-.5mm} and}\ \left(F_{n_1^+}\hspace{-.5mm}+\hspace{-.5mm}\gamma_{\alpha,n_1^+}\right)(x_j)\in K_m\right\}\lesssim N^{1/100}|\mathscr{K}|.\]
    On the other hand, the total count of $x_j$ is same as the number of typical intervals $J_j$ that intersect $A$. Thus, $|\mathscr{K}|\gtrsim N^{-1/100}S$.
\end{proof}

Next, we verify $K_m\in \mathcal{N}_{r_{n_1}}(\Phi_\alpha(E))$ for $K_m\in\mathscr{K}$.

\begin{lem}[Neighborhood Containment]\label{lem:K_m in ngbh 1}
    Let $K_m\in\mathscr{K}$. Then $K_m\subseteq \mathcal{N}_{r_{n_1}}(\Phi_\alpha(E))$.
\end{lem}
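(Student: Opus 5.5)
The plan is to start from a point of $A$ whose approximate image lands in $K_m$ and use the fact that the piecewise linear image sweeps across $K_m$ at controlled speed. Since $K_m\in\mathscr{K}$, there is a typical interval $J_j$ and a point $x_j\in A\cap J_j$ with $F_{n_1^+}(x_j)+\gamma_{\alpha,n_1^+}(x_j)\in K_m$. Because $x_j\in A$, there is $y_j\in[-r_N,r_N]$ with $(x_j,F(x_j)+y_j)\in E$. Hence $\Phi_\alpha(x_j,F(x_j)+y_j)=F(x_j)+y_j+\gamma_\alpha(x_j)\in\Phi_\alpha(E)$. By Lemma \ref{lem:L infty difference},
\[
\bigl|F(x_j)+\gamma_\alpha(x_j)-F_{n_1^+}(x_j)-\gamma_{\alpha,n_1^+}(x_j)\bigr|\leqslant (N^{1/100}+1)r_{n_1^+}.
\]
So a point of $\Phi_\alpha(E)$ lies within $(N^{1/100}+1)r_{n_1^+}+r_N\leqslant 3N^{1/100}r_{n_1^+}$ of $K_m$.

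One such point is not enough to put all of $K_m$ into the $r_{n_1}$-neighborhood, so I will apply the same estimate to every point of $K_m$. Take an arbitrary $z\in K_m$. Since $\mathrm{diam}(K_m)=100N^{1/100}r_{n_1^+}$, we get
\[
d(z,\Phi_\alpha(E))\leqslant 100N^{1/100}r_{n_1^+}+3N^{1/100}r_{n_1^+}\lesssim N^{1/100}r_{n_1^+}.
\]
It remains to show $N^{1/100}r_{n_1^+}<r_{n_1}$. Scale separation gives $r_{n_1^+}\leqslant 2^{-N^{-9/100}N}r_{n_1}=2^{-N^{91/100}}r_{n_1}$. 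For $N$ sufficiently large, $103N^{1/100}2^{-N^{91/100}}<1$, and this gives $K_m\subseteq\mathcal{N}_{r_{n_1}}(\Phi_\alpha(E))$. This bound uses no goodness or typicality at all beyond the existence of $x_j$, so the lemma is essentially a bookkeeping consequence of the triangle inequality.

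The only place needing care is the definition of $\mathscr{K}$. The text writes $F_{n_1^+}(x_j)+\gamma_{\alpha,n_1^+}$, evaluated at $x_j$, and I read membership in $\mathscr{K}$ as meaning that some selected $x_j\in A$ maps into $K_m$. In the case $t<0$ the map is instead $\Phi_\alpha(x,F_{n_1^+}(x))=F_{n_1^+}(x)+\gamma_\alpha(x)$. There only the $F$-approximation error $N^{1/100}r_{n_1^+}$ enters, and the same argument applies verbatim.

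The main obstacle, which is mild, is making sure the crude factor $N^{1/100}$ coming from $\Lip(F)$ and the $K_m$ length is absorbed. The gap of $N^{91/100}$ dyadic scales between $n_1^+$ and $n_1$ handles this with room to spare.
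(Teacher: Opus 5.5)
Your proof is correct and is essentially the paper's argument. You fix $x_j\in A\cap J_j$ whose approximate image lies in $K_m$, then use Lemma \ref{lem:L infty difference} and the triangle inequality to get $d(z,\Phi_\alpha(E))\lesssim N^{1/100}r_{n_1^+}$ for every $z\in K_m$. Finally you absorb the factor $N^{1/100}$ via $r_{n_1^+}\leqslant 2^{-N^{91/100}}r_{n_1}$, with strict inequality as the open neighborhood requires. The only cosmetic difference is that you exhibit the point $F(x_j)+y_j+\gamma_\alpha(x_j)\in\Phi_\alpha(E)$ directly, with explicit constants, whereas the paper passes through $\Phi_\alpha(x_j,F(x_j))\in\mathcal{N}_{r_N}(\Phi_\alpha(E))$.
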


\begin{proof}
    Let $y\in K_m$. Then
    \[\left|F_{n_1^+}(x_j)+\gamma_{\alpha,n_1^+}(x_j)-y\right|\leqslant 100N^{1/100}r_{n_1^+}.\]
    By Lemma \ref{lem:L infty difference}, $\left\|F-F_{n_1^+}\right\|_{L^\infty([0,1])}\leqslant N^{1/100}r_{n_1^+}$ and $\left\|\gamma_\alpha-\gamma_{\alpha,n_1^+}\right\|_{L^\infty([0,1])}\leqslant r_{n_1^+}$, so
    \[\left|\Phi_\alpha(x_j,F(x_j))-F_{n_1^+}(x_j)-\gamma_{\alpha,n_1^+}(x_j)\right|\leqslant \left|F(x_j)-F_{n_1^+}(x_j)\right|+\left|\gamma(\alpha-x_j)-\gamma_{\alpha,n_1^+}(x_j)\right|\lesssim N^{1/100}r_{n_1^+}.\]
    Thus, $\left|y-\Phi_\alpha(x_j,F(x_j))\right|\lesssim N^{1/100}r_{n_1^+}$. Since $x_j\in A$, we have $\Phi_\alpha(x_j,F(x_j))\in \mathcal{N}_{r_N}(\Phi_\alpha(E))$, so
    \[d(y,\Phi_\alpha(E))\lesssim N^{1/100}r_{n_1^+}\leqslant 2^{-N^{-9/100}N}N^{1/100}r_{n_1}.\]
    For sufficiently large $N$, $C 2^{-N^{-9/100}N}N^{1/100}<1$, where $C>0$ is the implicit constant. Thus, $y\in \mathcal{N}_{r_{n_1}}(\Phi_\alpha(E))$.
\end{proof}

Now we are ready to reach a contradiction. By Lemma \ref{lem:lower bound for Km} and \ref{lem:K_m in ngbh 1},
\[\mathcal{L}^1(\mathcal{N}_{r_{n_1}}(\Phi_\alpha(E)))\gtrsim N^{-1/100}SN^{1/100}r_{n_1^+}=r_{n_1^-}.\]
This contradicts the assumption $\mathcal{L}^1(\mathcal{N}_{r_{n+1}}(\Phi_\alpha(E)))\leqslant r_n$ for sufficiently large $N$.

\begin{rmk}\label{rmk:case YX}
    To prove
    \[\mathcal{L}^1(\{x\in[0,1]:\exists\,y\in[-r_N,r_N]\ \text{such that}\ (F(x)+y,x)\in E\})\lesssim N^{-1/100},\]
    the only modification occurs in the quantitative version of Rademacher's differentiation theorem. We need to quantify the maps $x\mapsto \Phi_{\lambda}(F(x),x)$ for $\lambda\in\{\alpha,\alpha'\}$. In the counting argument, set
    \[t=\nabla \Phi_\alpha(F(a),a)\cdot (c,1)=(-\gamma'(\alpha-F(a)),1)\cdot (c,1).\]
    We then modify the proof of Lemma \ref{lem:upper bound on bad 1} to obtain an upper bound on the number of bad intervals. Note that $\partial_1\Phi_\alpha(x)=-\gamma'(\alpha-x_1)$ is $1$-Lipschitz. This property allows us to adapt the proof of Lemma \ref{lem:upper bound on bad 1}.
\end{rmk}

\bibliography{refs}
\bibliographystyle{abbrv}

\end{document}